\let\mathscr\mathcal
 \let\subsection\Subsection
 \numberwithin{paragraph}{subsection}
 \numberwithin{equation}{section}
 \def\paragraph{\@startsection{paragraph}{4}%
   \z@{.4\baselineskip}{-\fontdimen2\font}%
   {\normalfont\scshape}}
   \let\c@equation\c@paragraph
  \let\cl@equation\cl@paragraph
\def\th@plain{%
  \let\thm@indent\noindent
  \thm@headfont{\bfseries}%
  \thm@notefont{\upshape}%
  \thm@preskip.5\baselineskip \@plus .5\baselineskip
  \thm@postskip\thm@preskip
  \thm@headpunct{\pointrait}
  \itshape }
\theoremstyle{plain}
\newtheorem{CLprop}[paragraph]{Proposition}
\newtheorem{CLtheo}[paragraph]{Theorem}
\newtheorem{CLcoro}[paragraph]{Corollary}
\newtheorem{CLlemm}[paragraph]{Lemma}
\theoremstyle{definition}
\newtheorem{CLdefi}[paragraph]{Definition}
\theoremstyle{remark}
\begin{document}

\title[Heights and measures on analytic spaces]
 {Heights and measures on analytic spaces. \\
  A survey of recent results, and some remarks}
\author{Antoine Chambert-Loir}
\address{IRMAR, Universit\'e de Rennes~1,
Campus de Beaulieu, 35042 Rennes Cedex.
\url{antoine.chambert-loir@univ-rennes1.fr}}


\subjclass{14G40,14G22}

\maketitle
\tableofcontents

\def\red{{\operatorname{red}}}
\def\curv{{\operatorname{curv}}}
\def\Div{{\operatorname{Div}}}
\def\gm{\mathbf G_{\mathrm m}}
\def\C{{\mathbf C}}
\def\A{{\mathbf A}}
\def\P{{\mathbf P}}
\def\Q{{\mathbf Q}}
\def\N{{\mathbf N}}
\def\R{{\mathbf R}}
\def\Z{{\mathbf Z}}
\def\sp{\operatorname{sp}}
\def\ord{\operatorname{ord}}
\def\dim{\operatorname{dim}}
\let\ra\rightarrow
\let\hra\hookrightarrow
\let\eps\varepsilon
\let\phi\varphi
\let\bar\overline
\let\hat\widehat
\def\resp{\emph{resp.}\xspace}
\def\loccit{\emph{loc.cit.}\xspace}
\def\cf{\emph{cf.}\xspace}
\def\ie{\emph{i.e.}\xspace}
\def\an{{\text{\upshape an}}}
\def\abs#1{\left|{#1}\right|}
\def\norm#1{\left\| {#1} \right\|}

\def\hc{{\widehat c}}
\def\hCH{{\widehat{\operatorname{CH}}}}
\def\hPic{{\widehat{\operatorname{Pic}}}}
\def\ad{{\text{\upshape ad}}}
\def\sm{{\text{\upshape sm}}}
\def\hdiv{\mathop{\widehat{\operatorname{div}}}}
\def\hvol{\mathop{\widehat{\operatorname{vol}}}}
\def\vol{\operatorname{vol}}
\def\Arg{\operatorname{Arg}}

\def\CH{\operatorname{CH}}
\def\Gal{\operatorname{Gal}}
\def\Spf{\operatorname{Spf}}
\def\Spec{\operatorname{Spec}}
\def\Proj{\operatorname{Proj}}
\def\div{\operatorname{div}}
\def\Pic{\operatorname{Pic}}
\def\bPic{\overline{\Pic}}
\def\alg{{\text{\upshape alg}}}
\def\bPicalg{\overline{\Pic}_{\text{\upshape alg}}}
\def\bPicfor{\overline{\Pic}_{\text{\upshape for}}}
\def\bPicsm{\overline{\Pic}_{\text{\upshape sm}}}
\def\bPicad{\overline{\Pic}_{\text{\upshape ad}}}
\def\tube#1{\mathopen]#1\mathclose[}
\def\sozat{\,;\,}
\def\ddc{\mathop{\mathrm d\mathrm d^c}}
\allowdisplaybreaks[1]
\def\FS{{\mathrm {FS}}}
\def\W{{\mathrm {W}}}



The first goal of this paper was to survey my definition
in~\cite{chambert-loir2006}
of measures on non-archimedean analytic spaces in the
sense of Berkovich and to explain its applications
in Arakelov geometry. These measures are analogous
the measures on complex analytic spaces given by
products of first Chern forms of hermitian line bundles.\footnote{%
M.~Kontsevich and Yu.~Tschinkel gave me copies of unpublished notes
from the years 2000--2002 where they develop similar ideas to
construct canonical non-archimedean metrics on Calabi--Yau varieties ;
see also~\cite{kontsevich-soibelman2001,kontsevich-soibelman2006}.}
In both contexts, archimedean and non-archimedean, they are related
with Arakelov geometry and the local height pairings
of cycles. However, while the archimedean measures lie at
the ground of the definition of the archimedean local heights
in Arakelov geometry, the situation is reversed in the ultrametric case:
we begin with the definition of local heights given by arithmetic
intersection theory and define measures in such a way that
the archimedean formulae make sense and are valid.
The construction is outlined
in Section~1, with references concerning
metrized line bundles and the archimedean setting.
More applications to Arakelov geometry
and equidistribution theorems are discussed in Section~3.

The relevance of Berkovich spaces in Diophantine geometry has
now made been clear by many papers; besides~\cite{chambert-loir2006}
and~\cite{chambert-loir-thuillier2009} and the general equidistribution
theorem of Yuan~\cite{yuan2008}, I would like
to mention the works~\cite{gubler2007a,gubler2007b,gubler2008,faber2009}
who discuss the function field case of the equidistribution theorem,
as well as the potential theory
on non-archimedean curves developed simultaneously by
Favre, Jonsson \& Rivera-Letelier~\cite{favre-jonsson2004,favre-rl2007}
and Baker \& Rumely for the projective line~\cite{baker-rumely2010},
and in general by A.~Thuillier's PhD thesis~\cite{thuillier2005}.
The reader will find many important results in the latter work,
which unfortunately is still unpublished at the time of this writing.

Anyway, I found useful to add examples and complements
to the existing (and non-) litterature. This is done in Section~2.
Especially, I discuss in Section~2.2
the relation between the reduction graph
and the skeleton of a Berkovich curve, showing that the two
constructions of measures coincide.
Section~2.3 shows that the measures defined are of a local
nature; more generally, we show that the measures
vanish on any  open subset  where one of the metrized line bundles
involved is trivial. This suggests a general definition  of
\emph{strongly pluriharmonic} functions on Berkovich spaces,
as uniform limits of logarithms of absolute
values of invertible holomorphic functions.
(Strongly pluriharmonic
fonctions should only exhaust pluriharmonic functions
when the residue field is algebraic over a finite field,
but  not in general.)
In Section~2.4, we discuss polarized dynamical systems
and explain the construction of canonical metrics and measures
in that case. We also show that the canonical measure
vanishes on the Berkovich equicontinuity locus. In fact,
what we show is that the canonical metric is ``strongly pluriharmonic''
on that locus. This is the direct generalization of a theorem
of~\cite{rivera-letelier2003} for the projective line
(see also~\cite{baker-rumely2010} for an exposition);
this generalizes also a theorem of~\cite{kawaguchi-silverman2009}
that Green functions are locally constant on the classical
equicontinuity locus. As already were their proofs, mine
is a direct adaptation of the proof
of the complex case~\cite{hubbard-papadopol1994}.
In Section~2.5, following Gubler~\cite{gubler2010},
we finally describe the canonical measures in the case
of abelian varieties.

In Section~3, we discuss applications of the measures in
Diophantine geometry over global fields. Once definitions
are recalled out in Section~3.1, we briefly discuss
in Section~3.2  the relation between Mahler measures
(\ie, integration of Green functions against measures)
and heights.
In Section~3.3, we survey the equidistribution theorems
for Galois orbits of points of ``small height'', following
the variational method of Szpiro--Ullmo--Zhang~\cite{szpiro-u-z97}
 and~\cite{yuan2008}.
In fact, we describe the more general statement
from~\cite{chambert-loir-thuillier2009}.
Finally, Section~3.4 discusses positive lower bounds  for heights
on curves.
This is inspired by recent papers~\cite{baker-demarco2009,petsche-szpiro-tucker2009} but the method goes back to Mimar's unpublished
thesis~\cite{mimar1997}. A recent preprint~\cite{yuan-zhang2009}
of Yuan and Zhang establishes a similar result in any dimension.

\paragraph*{Acknowledgments\pointrait}
This paper grew out of the invitation of Johannes Nicaise and Julien
Sebag
to add a contribution to the proceedings of the
conference ``Motivic integration and its interactions
with model theory and non-archimedean geometry'' (ICMS, Edinburgh 2008);
I thank them heartily for that.

I wrote this paper during a stay at
the Institute for Advanced Study in Princeton whose winterish
but warm atmosphere   was extremly motivating.
I acknowledge support of the Institut
universitaire de France, as well of the National Science
Foundation under agreement No.~DMS-0635607.

During the writing of this paper,
transatlantic e-contacts with Antoine Ducros
have been immensely profitable. Besides him, I also wish to thank
Matthew Baker and Amaury Thuillier for their interest and comments.
I am also grateful to the referee for having pointed out
many misprints as well as some serious inacurracies.

\def\myparagraph#1{\paragraph{#1\pointrait}}
\let\myparagraph\paragraph

\section{Metrized line bundles and measures}

\subsection{Continuous metrics}

\myparagraph{Definition}
Let $X$ be a topological space together
with a sheaf of local rings~$\mathscr O_X$ (``analytic functions'');
let also $\mathscr C_X$ be the sheaf of continuous functions on~$X$.
In analytic geometry, local functions have an absolute value
which is a real valued continuous function,
satisfying the triangle inequality. Let us thus assume
that we have a morphism of sheaves $\mathscr O_X\ra\mathscr C_X$,
written $f\mapsto \abs f$,
such that $\abs{fg}=\abs f\abs g$,
$\abs 1=1$, and $\abs{f+g}\leq\abs f+\abs g$.

A line bundle on~$(X,\mathscr O_X)$ is a sheaf~$L$ of~$\mathscr O_X$-modules
which is locally isomorphic to~$\mathscr O_X$. In other words,
$X$ is covered by open sets~$U$ such that $\mathscr O_U\simeq L|U$;
such an isomorphism is equivalent to a non-vanishing
section~$\eps_U\in\Gamma(U,L)$, also called a local frame of~$L$.

If $s$ is a section of a line bundle~$L$ on an open set~$U$,
the value of~$s$ at a point~$x\in U$ is only well-defined
as an element of the stalk~$L(x)$, which is a $\kappa(x)$-vector
space of dimension~$1$. (Here, $\kappa(x)$ is the residue
field of~$\mathscr O_X$ at~$x$.)
Prescribing a  metric on~$L$ amounts to assigning, in a coherent way,
the norms of these values.
Formally,
a \emph{metric} on~$L$ is the datum, for any open set~$ U\subset X$
and any section $s\in\Gamma(U,L)$, of a continuous
function $\norm s_U\colon U\ra\R_+$, satisfying the following properties:
\begin{enumerate}
\item for any open set $V\subset U$, $\norm s_V$ is the restriction
to~$V$ of the function~$\norm s_U$;
\item for any function $f\in\mathscr O_X(U)$,
$\norm {fs}=\abs f \norm s$;
\item if $s$ is a local frame on~$U$, then $\norm s$ doesn't vanish
at any point of~$U$.
\end{enumerate}
One usually writes $\bar L$ for the pair $(L,\norm \cdot)$ of
a line bundle~$L$ and a metric on it.

Observe that the trivial line bundle~$\mathscr O_X$
has a natural ``trivial'' metric, for which $\norm 1=1$.
In fact, a metric on the trivial line bundle~$\mathscr O_X$
is equivalent to the datum of a continuous function~$h$ on~$X$,
such that $\norm 1=e^{-h}$.

\myparagraph{The Abelian group of metrized line bundles}
Isomorphism of metrized line bundles
are isomorphisms of line bundles which respect the metrics;
they are called \emph{isometries}.
Constructions  from tensor algebra extend naturally to the framework
of metrized line bundles, compatibly with isometries.
The tensor product of two
metrized line bundles $\bar L$ and~$\bar M$ has a natural metrization
such that $\norm {s\otimes t}=\norm s \norm t$, if $s$ and~$t$
are local sections of~$L$ and~$M$ respectively.
Similarly, the dual of a metrized line bundle has a metrization,
and the obvious isomorphism $L\otimes L^\vee\simeq \mathscr O_X$
is an isometry.
Consequently, isomorphism  classes
of metrized line bundles on~$X$ form an Abelian group~$\bPic(X)$.
This group fits in an exact sequence
\[ 0 \ra \mathscr C(X) \ra \bPic(X) \ra \Pic(X) , \]
where the first map associates to a real continuous function~$h$
on~$X$ the trivial line bundle endowed with the metric
such that $\norm 1=e^{-h}$, and the second associates
to a metrized line bundle the underlying line bundle.
It is surjective
when any line bundle has a metric (this certainly holds
if $X$ has partitions of unity).

Similarly, we can consider pull-backs of metrized line bundle.
Let $\phi\colon Y\ra X$ be a morphism of locally ringed spaces
such that $\abs {\phi^*f}=\abs f\circ\phi$ for any $f\in\mathscr O_X$.
Let $\bar L$ be a metrized line bundle on~$X$.
Then, there is a canonical metric on~$\phi^*L$ such that
$\norm{\phi^*s}=\norm s\circ\phi$ for any section~$s\in\Gamma(U,L)$.
This induces a morphism of Abelian groups $\phi^*\colon\bPic(X)\ra\bPic(Y)$.

\subsection{The case of complex analytic spaces}
\myparagraph{Smooth metrics}
In complex analytic geometry, metrics are a very
well established tool.
Let us first consider the case of the projective
space $\mathrm X=\P^n(\C)$; a point $x\in X$ is a $(n+1)$-tuple
of homogeneous coordinates $[x_0:\dots:x_n]$, not all zero,
and up to a scalar. Let $\pi\colon \C^{n+1}_*\ra X$ be the canonical
projection map, where the index~$*$ means that we remove
the origin~$(0,\dots,0)$.
The fibers of~$\pi$ have a natural action of~$\C^*$ and
the line bundle~$\mathscr O(1)$
has for sections~$s$ over an open set~$\mathrm U\subset\P^n(\C)$
the analytic functions~$F_s$
on the open set~$\pi^{-1}(\mathrm U)\subset\C^{n+1}_*$
which are homogeneous of degree~$1$.
The \emph{Fubini-Study metric} of~$\mathscr O(1)$ assigns
to the section~$s$ the norm $\norm s_\FS$ defined by
\[  \norm s_{\FS}([x_0:\dots:x_n]) = \frac{\abs{F_s(x_0,\dots,x_n)}}{\left(\abs{x_0}^2+\dots+\abs{x_n}^2\right)^{1/2}}. \]
It is more than continuous; indeed, if $s$ is a local frame on an open set~$\mathrm U$,
then $\norm s$ is a $\mathscr C^\infty$-function on~$\mathrm U$;
such metrics are called \emph{smooth}.

\myparagraph{Curvature}
Line bundles with smooth metrics
on smooth complex analytic spaces allow to perform
differential calculus. Namely, the \emph{curvature} of a
smooth metrized line bundle~$\bar L$
is a differential form $c_1(\bar L)$ of type~$(1,1)$ on~$\mathrm X$.
Its definition involves the differential operator
\[ \ddc=\frac i\pi \partial\bar\partial .\]
When an open set $\mathrm  U\subset \mathrm  X$
admits local coordinates $(z_1,\dots,z_n)$,
and $s\in\Gamma(\mathrm U,L)$ is a local frame, then
\[ c_1(\bar L)|_{\mathrm U} = \ddc \log\norm{s}^{-1}
 =\frac i\pi \sum_{1\leq j,k\leq n} \frac{\partial^2}{\partial z_j\partial \bar z_k}
\log\norm{s}^{-1} \mathrm dz_j\wedge \mathrm d\bar z_k. \]
The Cauchy-Riemann equations ($\partial f/\partial \bar z=0$ for
any holomorphic function~$f$ of the variable~$z$) imply
that this formula does not depend on the choice
of a local frame~$s$. Consequently, these differential forms
defined locally glue to
a well-defined global differential form on~$\mathrm X$.

Taking the curvature form of a metrized line bundle is a linear
operation: $c_1(\bar L\otimes\bar M)=c_1(\bar L)+c_1(\bar M)$.
It also commutes with pull-back: if $f\colon Y\ra X$ is a morphism,
then $f^*c_1(\bar L)=c_1(f^*\bar L)$.

In the case of the Fubini-Study metric over the projective space~$\P^n(\C)$,
the curvature is computed as follows.
The open subset~$\mathrm U_0$ where the homogeneous coordinate~$x_0$ is non-zero
has local coordinates $z_1=x_1/x_0$,
\dots, $z_n=x_n/x_0$;  the homogeneous polynomial~$X_0$
defines a non-vanishing section~$s_0$ of~$\mathscr O(1)$ on~$\mathrm U_0$
and \[ \log\norm{s_0}^{-1}_\FS = \frac12 \log\left(1+\sum_{j=1}^n\abs{z_j}^2\right). \]
Consequently, over~$\mathrm U_0$,
\begin{align*}
 c_1(\overline{\mathscr O(1)}_\FS) & =
\frac i\pi \partial\bar\partial \log\norm{s_0}^{-1}_\FS  \\
&  =\frac i{2\pi} \partial\left( \sum_{k=1}^n \frac{z_k}{1+\norm z^2} \mathrm d\bar z_k\right) \\
& =\frac i{2\pi} \sum_{j=1}^n \frac1{1+\norm z^2} \mathrm dz_j\wedge\mathrm d\bar z_j
- \frac i{2\pi}\sum_{j,k=1}^n \frac{z_k\bar z_j}{(1+\norm z^2)^2}
\mathrm dz_j\wedge\mathrm d\bar z_k.
\end{align*}
In this calculation, we have abbreviated $\norm z^2=\sum_{j=1}^n\abs{z_j}^2$.

\myparagraph{Products, measures}
Taking the product  of $n$ factors equal to this differential form,
we get a differential form of type~$(n,n)$ on the $n$-dimensional
complex space~$\mathrm X$. Such a form can be integrated on~$\mathrm X$
and the Wirtinger formula asserts that
\[ \int_{\mathrm X} c_1(\bar L)^n = \deg (L) \]
is the \emph{degree} of~$L$ as computed by intersection theory.
As an example, if $\mathrm X=\P^1(\C)$, we have seen that
\[ c_1(\overline{\mathscr O(1)}_\FS)= \frac{i}{2\pi(1+\abs z^2)^2}  \mathrm dz\wedge d\bar z, \]
where $z=x_1/x_0$ is the affine coordinate of~$X\setminus\{\infty\}$.
Passing in polar coordinates $z=r e^{i\theta}$, we get
\[ c_1(\overline{\mathscr O(1)}_\FS)
= \frac{1}{2\pi(1+r^2)^2}
 2r\mathrm dr\wedge\mathrm d\theta \]
whose integral over~$\C$ equals
\[ \int_{\P^1(\C)} c_1(\overline{\mathscr O(1)}_\FS)
= \int_0^\infty \frac{1}{2\pi(1+r^2)^2}  2r\mathrm dr
 \int_0^{2\pi} \mathrm d\theta
= \int_0^\infty \frac{1}{(1+u)^2}\mathrm du
= 1 . \]

\myparagraph{The Poincaré--Lelong equation}
An important formula is the Poincaré--Lelong equation.
For any line bundle~$L$ with a smooth metric,
and any section~$s\in\Gamma(\mathrm X,L)$ which does not vanish identically
on any connected component of~$\mathrm X$, it asserts the following
equality of \emph{currents}\footnote{The space of currents is the dual
to the space of differential forms, with the associated grading;
in the orientable case,
currents can also be seen as differential forms with distribution coefficients.}:
\[ \ddc \log\norm s^{-1} + \delta_{\div(s)} = c_1(\bar L), \]
where $\ddc\log\norm s^{-1}$ is the image of~$\log\norm{s}^{-1}$
under the differential operator~$\ddc$, taken in the sense of distributions,
and $\delta_{\div(s)}$ is the current of integration on the cycle~$\div(s)$
 of codimension~$1$.

\myparagraph{Archimedean height pairing}\label{sec.local.height.arch}
Metrized line bundles and their associated curvature forms are
a basic tool in Arakelov geometry, invented by Arakelov in~\cite{arakelov74}
and developped by Faltings~\cite{faltings1984}, Deligne~\cite{deligne1987}
for curves, and by Gillet-Soulé~\cite{gillet-s90} in any dimension.
For our concerns, they allow for a definition of height functions
for algebraic cycles on algebraic varieties defined over number fields.
As explained by Gubler~\cite{gubler1997,gubler1998},
they also permit to develop a theory of archimedean local heights.

For simplicity, let us assume that $\mathrm X$ is proper, smooth, and that
all of its connected components have dimension~$n$.

Let $\bar L_0,\dots,\bar L_n$ be metrized line bundles
with smooth metrics. For $j\in\{0,\dots,n\}$, let $s_j$
be a regular meromorphic section of~$L_j$ and let $\div(s_j)$
be its divisor. The given metric of~$L_j$ furnishes
moreover a function $\log\norm{s_j}^{-1}$ on~$X$
and a $(1,1)$-form $c_1(\bar L_j)$, related by the Poincaré--Lelong
equation $\ddc\log\norm{s_j}^{-1}+\delta_{\div(s_j)}=c_1(\bar L_j)$.
In the terminology of Arakelov geometry,
$\log\norm{s_j}^{-1}$ is a Green current (here, function)
for the cycle $\div(s_j)$; we shall write $\hdiv(s_j)$
for the pair $(\div(s_j),\log\norm{s_j}^{-1})$.

Let $\mathrm Z\subset \mathrm X$ be a $k$-dimensional subvariety
such that the divisors $\div(s_j)$,
for $0\leq j\leq k$, have no common point on~$\mathrm Z$.
Then, one defines inductively the local height pairing
by the formula:
\begin{multline}\label{eq.inductive-C}
 (\hdiv(s_0)\dots \hdiv(s_k)|\mathrm Z) = (\hdiv(s_0)\dots\hdiv(s_{k-1})|\div(s_k|_{\mathrm Z})) \\
+ \int_{\mathrm X} \log\norm{s_k}^{-1} c_1(\bar L_0)\dots c_1(\bar L_{k-1})\delta_{\mathrm Z}. \end{multline}
The second hand of this formula requires two comments.
1) The divisor $\div(s_k|_{\mathrm Z})$ is a formal linear combination
of $(k-1)$-dimensional subvarieties of~$\mathrm X$,
and its local height pairing is computed by linearity from the
local height pairings of its components.
2) The integral of the right hand side involves a function with
singularities ($\log\norm {s_k}^{-1}$) to be integrated against
a distribution: in this case, this means restricting
the differential form $c_1(\bar L_0)\dots c_1(\bar L_{k-1})$ to the smooth part of~$\mathrm Z$, multiplying by $\log\norm{s_k}^{-1}$, and integrating
the result. The basic theory of closed positive currents proves
that the resulting integral converges absolutely; as in~\cite{gillet-s90},
one can also resort to Hironaka's resolution of singularities.

It is then a non-trivial result that the local height
pairing is symmetric in the involved $\hdiv$isors;
it is also multilinear. See~\cite{gubler2003} for more details,
as well as~\cite{gillet-s90} for the global case.

\myparagraph{Positivity}
Consideration of the curvature allows to define positivity notions
for metrized line bundles.
Namely, one says that a smooth metrized line bundle~$\bar L$ is \emph{positive}
(resp. \emph{semi-positive})
if its curvature  form is a positive (resp. a non-negative) $(1,1)$-form.
This means that for any point $x\in \mathrm X$, the
hermitian form $c_1(\bar L)_x$ on the complex tangent space~$\mathrm T_x\mathrm X$
is positive definite (resp. non-negative).
As a crucial example, the line bundle~$\mathscr O(1)$ with
its Fubini-Study metric is positive. The pull-back
of a positive metrized line bundle by an immersion is
positive. In particular, ample line bundles can be endowed
with a positive smooth metric; Kodaira's embedding theorem
asserts the converse: if a line bundle possesses a positive
smooth metric, then it is ample.

The pull-back of a semi-positive metrized line bundle
by any morphism is still semi-positive.
If $\bar L$ is semi-positive, then the measure $c_1(\bar L)^n$
is a positive measure.

\myparagraph{Semi-positive continuous metrics}
More generally, both the curvature and the Poincaré--Lelong equation
make sense for metrized line bundles with arbitrary (continuous)
metrics, except that $c_1(\bar L)$ has to be considered as a current.
The notion of semi-positivity can even be extended to this more
general case, because it can be tested by duality: a current
is positive if its evaluation on any nonnegative differential form
is nonnegative. Alternatively,
semi-positive (continuous) metrized line bundles are characterized by
the fact that for any local frame~$s$ of~$\bar L$ over an open set~$\mathrm U$,
the continuous
function $\log\norm{s}^{-1}$ is \emph{plurisubharmonic} on~$\mathrm U$.
In turn, this means that for any morphism $\phi\colon \bar D \ra \mathrm U$,
where $\bar D=\bar D(0,1)$ is the closed unit disk in~$\C$,
\[ \log\norm{s}^{-1} (\phi(0)) \leq \frac1{2\pi}\int_0^{2\pi} \log\norm{s}^{-1}(\phi(e^{i\theta}))\mathrm d\theta. \]

Assume that $\bar L$ is semi-positive.
Although products of currents are not defined in general
(not more than products of distributions), the theory of
Bedford--Taylor~\cite{bedford-t82,bedford93}
and Demailly~\cite{demailly1985,demailly93} defines a current
$c_1(\bar L)^n$ which then is a positive measure on~$\mathrm X$.
There are two ways to define this current. The first one
works locally and proceeds by induction: if $u=\log\norm s^{-1}$,
for a local non-vanishing section~$s$ of~$L$, one defines
a sequence~$(T_k)$ of closed positive currents by
the formulae $T_0=1$, $T_1=\ddc u$,\dots,
$T_{k+1}=\ddc(uT_k)$ and $c_1(\bar L)^n=\ddc(u)^n$ is defined to be~$T_n$.
What makes this construction work is the fact that at each step,
$uT_k$ is a well-defined current (product of a continuous function and of a positive current), and one has to prove
that $T_{k+1}$ is again a closed positive current.
The other way, which shall be the one akin to a generalization
in the ultrametric framework, consists in observing that
if $L$ is a line bundle with a continuous semi-positive metric~$\norm\cdot$,
then there exists a sequence of smooth semi-positive metrics $\norm\cdot_k$
on the line bundle~$L$ which converges uniformly to the initial metric:
for any local section~$s$, $\norm s_k$ converges uniformly to~$\norm s$
on compact sets.
The curvature current $c_1(\bar L)$ is then the limit of
the positive currents $c_1(\bar L_k)$, and the measure $c_1(\bar L)^n$
is the limit of the measures $c_1(\bar L_k)^n$.
(We refer to~\cite{maillot2000} for the global statement;
to construct the currents, one can in fact work locally in
which case a simple convolution argument establishes the claim.)

An important example of semi-positive metric which is continuous,
but not smoth, is furnished by the Weil metric on the
line bundle~$\mathscr O(1)$ on~$\P^n(\C)$. This metric is defined
as follows: if $\mathrm U\subset\P^n(\C)$ is an open set, and $s$
is a section of~$\mathscr O(1)$ on~$U$ corresponding
to an analytic function~$F_s$ on~$\pi^{-1}(\mathrm U)\subset\C^{n+1}_*$
which is homogeneous of degree~$1$, then for any $(x_0,\dots,x_n)\in\pi^{-1}(\mathrm U)$, one has
\[ \norm s_\W = \frac{\abs{F_s(x_0,\dots,x_n)}}{\max(\abs{x_0},\dots,\abs{x_n})}.\]
The associated measure $c_1(\overline{\mathscr O(1)}_{\mathrm W})^n$
on~$\P^n(\C)$ is as follows,
cf.~\cite{zhang95,maillot2000}:
the subset
of all points $[x_0:\dots:x_n]\in\P^n(\C)$ such that $\abs{x_j}=\abs{x_k}$
for all $j,k$ is naturally identified with the polycircle $\mathbf S_1^n$
(map $[x_0:\dots:x_n]$ to $(x_1/x_0,\dots,x_n/x_0)$); take the normalized
Haar measure of this compact group and push it onto~$\P^n(\C)$.

\myparagraph{Admissible metrics}
Let us say that a continuous metrized line bundle
is \emph{admissible}  if it can be written as $\bar L\otimes\bar M^\vee$,
where $\bar L$ and~$\bar M$  are  metrized line bundles
whose metrics are continuous and semi-positive.
Admissible metrized line bundles form a subgroup $\bPicad(\mathrm X)$ of~$\bPic(\mathrm X)$
which maps surjectively onto~$\Pic(\mathrm X)$ if $\mathrm X$ is projective.

The curvature current~$c_1(\bar L)$
of an admissible metrized line bundle~$\bar L$
is a differential form of type~$(1,1)$ whose coefficients are
signed measures. Its $n$th product $c_1(\bar L)^n$
is well-defined as a signed measure on~$\mathrm X$.

\myparagraph{Local height pairing (admissible case)}\label{sec.local.height.arch.ad}
The good analytic properties of semi-positive metrics
allow to extend the definition of the local height pairing
to the case of admissible line bundles.
Indeed, when one approximates uniformly a semi-positive line bundle
by a sequence of smooth semi-positive line bundles, one can prove
that the corresponding sequence of local height pairings
converges, the limit being independent on the chosen approximation.

The proof is inspired by Zhang's proof of the global
case in~\cite{zhang95b} and goes by induction.
Let us consider, for each~$j$, two smooth semi-positive metrics
on the line bundle~$L_j$ and assume that they differ by a factor~$e^{-h_j}$.
Then, the corresponding
local height pairings differ from an expression of the form
\[  \sum_{j=0}^k \int_{\mathrm Z} h_j c_1(\bar L_0)\dots \widehat{c_1(\bar L_j)} \dots c_1(\bar L_{k}), \]
where the written curvature forms are associated to the first metric
for indices~$<j$, and to the second for indices~$>j$.
This differential forms are positive by assumption, so that
the integral
is bounded  in absolute value by
\begin{multline*} \sum_{j=0}^k \norm {h_j}_\infty \int_{\mathrm Z} c_1(\bar L_0)\dots
\widehat{c_1(\bar L_j)} \dots  c_1(\bar L_{k}) \\
= \sum_{j=0}^K \norm {h_j}_\infty (c_1(L_0)\dots \widehat {c_1(L_j)}
\dots c_1(L_{k})|{\mathrm Z}),\end{multline*}
where the last expression is essentially a degree. (In these
formulae, the factor with a hat is removed.)
This inequality means that on the restriction to
the space of smooth semi-positive metrics, with the topology
of uniform convergence, the local height
pairing is uniformly continuous. Therefore, it first extends by continuity
on the space of continuous semi-positive metrics,
and then by multilinearity to the space of admissible metrics.

\subsection{The case of non-archimedean analytic spaces}\label{sec:meas-na}
Let $K$ be a complete ultrametric field. We are principally
interested in finite extensions of~$\Q_p$,
but the case of local fields of positive characteristic
(finite extensions of~$k((T))$, for a finite field~$k$)
have proved being equally useful, as are non-local fields
like the field~$\C((T))$ of Laurent power series with \emph{complex}
coefficients.
For simplicity, we will assume that $K$ is the
field of fractions of a complete discrete valuation ring~$K^\circ$,
let $\pi$ be a generator of the maximal ideal of~$K^\circ$
and let $\tilde K=K^\circ/(\pi)$ be the residue field.

\myparagraph{Continuous metrics}
Let $\mathrm X$
be a $K$-analytic space in the sense of Berkovich~\cite{berkovich1990}.
For simplicity, we will assume that $\mathrm X$ is
the analytic space associated to a \emph{proper} scheme over~$K$.
In that context, the general definition of continuous metrized
line bundles given above makes sense.

Let us detail the example of the line bundle~$\mathscr O(1)$
on the projective space~$\mathrm P^n_K$.
A point~$x\in\mathrm P^n_K$ possesses a complete residue field~$\mathscr H(x)$
which is a complete extension of~$K$ and homogeneous
coordinates $[x_0:\dots:x_n]$ in the field~$\mathscr H(x)$.
As in complex geometry, the projective space $\mathrm P^n_K$ is obtained
by glueing $n+1$ copies $\mathrm U_0,\dots,\mathrm U_n$ of the affine space $\mathrm A^n_K$,
where $\mathrm U_i$ corresponds to those points~$x$ such that $x_i\neq 0$.
Recall also that $\mathrm A^n_K$ is the space of multiplicative
semi-norms on the $K$-algebra $K[T_1,\dots,T_n]$ which induce
the given absolute value  on~$K$, together with the coarsest
topology such that for any semi-norm $x\in\mathrm A^n_K$,
the map $K[T_1,\dots,T_n]\ra\R$ defined by $f\mapsto x(f)$
is continuous. The kernel of a semi-norm~$x$ is a prime ideal~$\mathfrak p_x$
of~$K[T_1,\dots,T_n]$ and $x$ induces a norm on
the quotient ring $K[T_1,\dots,T_n]/\mathfrak p_x$,
hence on its field of fractions~$K(x)$. The completion
of $K(x)$ with respect to this norm is denoted~$\mathscr H(x)$
and is called the \emph{complete residue field} of~$x$.
The images in~$\mathscr H(x)$ of the intederminates~$T_i$ are denoted~$T_i(x)$,
more generally, the image in~$\mathscr H(x)$
of any polynomial~$f\in K[T_1,\dots,T_n]$ is denoted $f(x)$;
one has $x(f)=\abs{f(x)}$.

Let $f$ be a rational function on~$\mathrm P^n_K$, that is an element
of~$K(T_1,\dots,T_n)$. It defines an actual function
on the open set~$\mathrm U$ of~$\P^n_K$ where its denominator does not vanish;
its value at a point $x\in U$ is an element of~$\mathscr H(x)$.
More generally, Berkovich defines an analytic function on an open set~$\mathrm U$
of~$\mathrm P^n_K$ as a function $f$ on~$U$ such that $f(x)\in\mathscr H(x)$
for any $x\in \mathrm U$, and such that any point $x\in\mathrm U$
possesses a neighbourhood~$\mathrm V\subset\mathrm U$
such that $f|_{\mathrm V}$ is a uniform limit of rational functions
without poles on~$\mathrm V$.

The line bundle~$\mathscr O(1)$ can also be defined in a similar
way to the classical case; by a similar GAGA theorem, its global sections
are exactly the same as in algebraic geometry and are
described by homogeneous polynomials of degree~$1$ with coefficients
in~$K$. If $P$ is such a polynomial and~$s_P$ the corresponding
section, then
\[ \norm {s_P} (x) = \frac{ \abs{P(x_0,\dots,x_n)} } {\max(\abs{x_0},\dots,\abs{x_n})} \]
where $[x_0:\dots:x_n]$ is a system of homogeneous coordinates
in~$\mathscr H(x)$ for the point~$x$. The function $\norm {s_P}$
is continuous on~$\mathrm P^n_K$,
by the very definition of the topology on~$\mathrm P^n_K$.
Using the fact that $\mathscr O(1)$ is generated by its global sections,
one deduces the existence of a continuous metric on~$\mathscr O(1)$
satisfying the previous formula.

\myparagraph{Smooth metrics}
Following~\cite{zhang95b},
we now want to explain the analogues of smooth, and, later,
of semi-positive metrics.

Smooth metrics come from algebraic geometry over~$K^\circ$,
and, more generally, over the ring of integers of finite extensions of~$K$.
Let namely $\mathfrak X$ be a formal proper $K^0$-scheme
whose generic fibre in the sense of analytic geometry is~$\mathrm X$.%
\footnote{The reader might want to assume
that $\mathrm X$ is the analytic space associated to a
projective $K$-scheme~$X$ and that $\mathfrak X$ is a projective
$K^0$-scheme whose generic fibre equals~$X$. This doesn't make
too much a difference for our concerns.}
Let also $\mathfrak L$ be a line bundle on~$\mathfrak X$ which is model of
some power~$L^e$, where $e\geq 1$.
From this datum $(\mathfrak X,\mathfrak L,e)$, we can define a metric
on~$L$ as follows. Let $\mathfrak U$ be a formal open subset
of~$\mathfrak X$ over which~$\mathfrak L$ admits a local frame~$\eps_{\mathfrak U}$; over its generic fibre~$\mathrm U=\mathfrak U_K$, for any section~$s$
of~$L$, one can  write canonically $s^e=f\eps_{\mathfrak U}$, where
$f\in\mathscr O_{\mathrm X}(\mathrm U)$.
We decree that $\norm s=\abs f^{1/e}$.
In other words, the norm of a local frame on the formal model is assigned to be identically one. This makes sense because if $\eta_{\mathfrak U}$ is another
local frame of~$\mathfrak L$ on~$\mathfrak U$, there exists
an invertible formal function $f\in\mathscr O_{\mathfrak X}(\mathfrak U)^*$
such that $\eta_{\mathfrak U}=f\eps_{\mathfrak U}$
and the absolute value~$\abs f$ of the associated analytic function
on~$\mathrm U$ is identically equal to~$1$.
Considering a finite cover of~$\mathfrak X$ by formal open subsets,
their generic fibers form a finite cover of~$\mathrm X$
by \emph{closed} subsets and this is enough to glue
the local definitions to a continuous metric on~$L$.

Metrics on~$L$ given by this construction, for some
model~$(\mathfrak X,\mathfrak L,e)$ of some power~$L^e$ of~$L$
will be said to be \emph{smooth}.

\myparagraph{Green functions; smooth functions}
Let $\bar L$ be a metrized line bundle and let $s$
be a regular meromorphic section of~$L$. Its divisor~$\div(s)$
is a Cartier divisor in~$\mathrm X$.
The function $\log\norm{s}^{-1}$ is defined on the open
set $\mathrm X\setminus\abs{\div(s)}$; by analogy to
the complex case, we call it a \emph{Green function} for the
divisor~$\div(s)$.
When the metric on~$\bar L$ is smooth, the Green function
is said to be smooth.  The same remark applies for the other qualificatives
semi-positive, or admissible, that wil be introduced later.

Let us take for~$L$ the trivial line bundle, with
its canonical trivialization~$s=1$, and let us
endow it with a smooth metric. By definition, we
call $\log\norm s^{-1}$ a smooth function.
More generally, we define the space~$\mathscr C^\infty(\mathrm X)$
of (real valued) smooth functions to
be the real vector space spanned by these elementary
smooth functions. Observe that this definition reverses what
happens in complex geometry where smooth metrics on
the trivial line bundle are \emph{defined} from the knowledge
of smooth functions.

\myparagraph{Example: projective space}
Let us consider the
smooth metric on~$\mathscr O(1)$ associated to
the model~$(\mathbf P^n_{K^\circ},\mathscr O(1),1)$  of~$(\mathrm P^n_K,\mathscr O(1))$.
Let $\mathfrak U_i$ be the formal open subset of~$\mathbf P^n_{K^\circ}$
defined by the non-vanishing of the homogeneous coordinate~$x_i$.
Over, $\mathfrak U_i$, $\mathscr O(1)$ has a global non-vanishing
section, namely the one associated to the homogeneous polynomial~$X_i$.
The generic fiber~$U_i$ of~$\mathfrak U_i$ in the sense of algebraic geometry
is an affine space, with coordinates $z_j=x_j/x_i$, for $0\leq j\leq n$,
and $j\neq i$. However, its generic fiber~$\mathrm U_i$ in
the sense of rigid geometry is the $n$-dimensional polydisk in this affine
space defined by the inequalities $\abs{z_j}\leq 1$.
We thus observe that for any $x\in(\mathfrak U_i)_K$,
\begin{align*} \norm{X_i}(x) & =1 \\
& = \frac 1{\max(\abs{z_0},\dots,\abs{z_{i-1}},1,\abs{z_{i+1}},\dots,\abs{z_n})} \\
& = \frac{\abs{x_i}}{\max(\abs{x_0},\dots,\abs{x_i})} \\
& =\norm{X_i}_\W(x). \end{align*}
In other words, the Weil metric on~$\mathscr O(1)$ is a smooth metric.

\myparagraph{The Abelian group of smooth line bundles}
Let us show that \emph{any line bundle has a smooth metric.}
There is a general theory, due to Raynaud, that shows how to define
formal models from rigid analytic objects. In the present case,
$X$ being projective,
we may assume that $L$ is ample and consider a
closed embedding of~$\mathrm X$ in a projective space~$\mathrm P^n_K$
given by some power~$L^e$. Let $\mathfrak X$ be the Zariski closure
of~$\mathrm X$ in~$\mathrm P^n_{K^\circ}$; in concrete terms,
 if $I\subset K[X_0,\dots,X_n]$ is the homogeneous
ideal of~$i(X)$, $I\cap K^\circ[X_0,\dots,X_n]$ is the homogeneous
ideal of~$\mathfrak X$.
Let then $\mathfrak L$ be the restriction
to~$\mathfrak X$ of the line bundle~$\mathscr O(1)$.
The triple $(\mathfrak X,\mathfrak L,e)$ is a model of~$L$
and induces a smooth metric on~$L$.

Different models can give rise to the same metric.
If $\phi\colon\mathfrak X'\ra\mathfrak X$ is a morphism of models,
and $\mathfrak L'=\phi^*\mathfrak L$, then $(\mathfrak X',\mathfrak L',e)$
defines the same smooth metric on~$L$.
Moreover, if
two models~$(\mathfrak X_i,\mathfrak L_i,e_i)$, for $i\in\{1,2\}$,
define the same metric, there exists a third model~$(\mathfrak X,\mathfrak L)$,
with two morphisms $\phi_i\colon\mathfrak X\ra\mathfrak X_i$
such that the pull-backs $\phi_i^*\mathfrak L_i^{e_1e_2/e_i}$
coincide with~$\mathfrak L$.
More precisely,
if two models~$\mathfrak L$ and~$\mathfrak L'$ of some power~$L^e$
on a \emph{normal} model~$\mathfrak X$ define the same metric,
then they are isomorphic. (See, \emph{e.g.}, Lemma~2.2
of~\cite{chambert-loir-thuillier2009};
this may be false for non-normal
models; it suffices that $\mathfrak X$ be integrally closed in its
generic fiber.)

As a consequence, the set $\bPicsm(\mathrm X)$ of smooth metrized line bundles
is a subgroup of the group~$\bPic(\mathrm X)$.
The group $\bPicsm(\mathrm X)$
fits within an exact sequence
\[ 0 \ra \mathscr C^\infty(\mathrm X) \ra \bPicsm(\mathrm X)\ra\Pic(\mathrm X)\ra 0, \]
the last map is surjective because every line bundle admits a model.
If $f\colon \mathrm Y\ra \mathrm X$
is a morphism, then $f^*(\bPicsm(\mathrm X))\subset\bPicsm(\mathrm Y)$.

\myparagraph{Semi-positive metrics}
A smooth metric
is said to be \emph{ample}
if it is defined by a model~$(\mathfrak X,\mathfrak L,e)$
such that the restriction~$\mathfrak L_{\tilde K}$
of~$\mathfrak L$ to the closed fiber~$\mathfrak X_{\tilde K}$ is ample.
The Weil metric on the line bundle~$\mathscr O(1)$ on
the projective space is ample.
The proof given above of the existence of smooth metrics shows,
more precisely, that ample line bundles  admit ample metrics,
and that \emph{the pull-back of a smooth ample metric
by an immersion is a smooth ample metric.}

A smooth metric is said to be
\emph{semi-positive} if it can be defined on a model~$(\mathfrak X,\mathfrak L,e)$ such that the restriction~$\mathfrak L_{\tilde K}$
of~$\mathfrak L$ to the closed fiber~$\mathfrak X_{\tilde K}$ is
numerically effective: 
for any projective curve~$C\subset\mathfrak X_{\tilde K}$,
the degree of the restriction to~$C$ of~$\mathfrak L_{\tilde K}$ 
is non-negative.
Ample metrics are semi-positive.

The pull-back of a smooth semi-positive metric by any morphism
is semi-positive.

\myparagraph{Continuous semi-positive metrics}
Let us say that a continuous metric on a line bundle~$L$
is semi-positive if it is the uniform limit of a sequence
of smooth semi-positive metrics on the same line bundle~$L$.
As in the complex case, we then say that a metrized
line bundle is \emph{admissible} if it can be written
as $\bar L\otimes\bar M^\vee$, for two line bundles~$L$
and~$M$ with continuous semi-positive metrics.

Let $L$ be a metrized line bundle, and let $\norm\cdot_1$
and $\norm\cdot_2$ be two continuous metrics on~$L$.
It follows from the definition
that the metrics $\norm\cdot_{\min}=\min(\norm\cdot_1,\norm\cdot_2) $
and $\norm\cdot_{\max}=\max(\norm\cdot_1,\norm\cdot_2)$
are continuous metrics.

Moreover, \emph{these metrics $\norm\cdot_{\min}$ and $\norm\cdot_{\max}$
are smooth if the initial metrics are smooth.}
Indeed, there exists a model~$\mathfrak X$,
as well as two line bundles~$\mathfrak L_1$
and~$\mathfrak L_2$ extending the same power~$L^e$
of~$L$ and defining the metrics~$\norm\cdot_1$ and~$\norm\cdot_2$
respectively.
We may assume that $\mathfrak L_1$ and $\mathfrak L_2$ have
regular global sections~$s_1$ and~$s_2$ on~$\mathfrak X$
which coincide on~$X$,
with divisors~$\mathfrak D_1$ and~$\mathfrak D_2$ respectively.
(The general case follows, by
twisting~$\mathfrak L_1$ and~$\mathfrak L_2$
by a sufficiently ample line bundle on~$\mathfrak X$.)
The blow-up $\pi\colon\mathfrak X'\ra\mathfrak X$
of the ideal~$\mathfrak I_{\mathfrak D_1}+\mathfrak I_{\mathfrak D_2}$
carries an invertible ideal sheaf~$\mathfrak I_{\mathfrak E}=
\pi^{*}(\mathfrak I_{\mathfrak D_1}+\mathfrak I_{\mathfrak D_2})$,
with corresponding Cartier divisor~$\mathfrak E$.
Since $\mathfrak D_1$ and $\mathfrak D_2$ coincide on the generic
fiber, $\mathfrak I_{\mathfrak D_1}+\mathfrak I_{\mathfrak D_2}$
is already invertible there and $\pi$ is an isomorphism
on the generic fiber.

The divisors $\pi^*\mathfrak D_1$
and $\pi^*\mathfrak D_2$ decompose canonically as sums
\[ \pi^*\mathfrak D_1=\mathfrak D'_1+\mathfrak E,
\quad  \pi^*\mathfrak D_2=\mathfrak D'_2+\mathfrak E.\]
Let us pose
\[ \mathfrak D'=\mathfrak D'_1+\mathfrak D'_2+\mathfrak E
=\mathfrak D'_1+\pi^*\mathfrak D_2=\pi^*\mathfrak D_1+\mathfrak D'_2. \]
An explicit computation on the blow-up shows that
$(\mathfrak X',\mathfrak D',e)$
and
$(\mathfrak X',\mathfrak E,e)$
are models of $\norm{\cdot}_{\min}$ and $\norm{\cdot}_{\max}$ respectively.
In particular, these metrics are smooth.

\emph{Assume that the initial metrics are semi-positive,
and that some positive power of~$L$ is effective. Then,
the metric~$\norm\cdot_{\min}$ is semi-positive too.}
By approximation, it suffices to treat the case where
the initial metrics are smooth and semi-positive.
Then, the previous construction applies. Keeping the introduced notation,
let us show that the restriction to the special fiber of
the divisor~$\mathfrak (D')_{\tilde K}$ is numerically effective.
Let $C\subset\mathfrak X'_{\tilde K}$ be an integral curve and
let us prove that $C\cdot\mathfrak( D')_{\tilde K}$ is nonnegative. If
$C$ is not contained in~$\mathfrak D'_1$, then $C\cdot(\mathfrak D'_1)_{\tilde K}\geq 0$,
and $C\cdot(\pi^*\mathfrak D_2)_{\tilde K}=\pi_*C\cdot\mathfrak D_2\geq 0$ since
$(\mathfrak D_2)_{\tilde K}$ is numerically effective; consequently,
$C\cdot\mathfrak (D')_{\tilde K}\geq 0$. Similarly, $C\cdot\mathfrak (D')_{\tilde K}\geq0$
when $C$ is not contained in~$\mathfrak D'_2$.
Since $\mathfrak D'_1\cap\mathfrak D'_2=\emptyset$, this shows
that $C\cdot\mathfrak D'_{\tilde K}\geq 0$ in any case, hence $(\mathfrak D')_{\tilde K}$
is numerically effective.

This last result  is the analogue in the ultrametric case
to the fact that the maximum of two continuous
plurisubharmonic functions is continuous plurisubharmonic.
However, observe that in the complex case, the maximum
or the minimum of smooth functions are not smooth in general.

\myparagraph{Measures (smooth metrics)}
In the non-archimedean case,
there isn't yet a purely  analytic incarnation of
the curvature form (or current) $c_1(\bar L)$
of a metrized line bundle~$\bar L$, although
the non-archimedean Arakelov geometry of~\cite{bloch-g-s95b}
should certainly be pushed forward in that direction.
However, as I discovered
in~\cite{chambert-loir2006}, one can define
an  analogue of the measure $c_1(\bar L)^n$ when the
space~$\mathrm X$ has dimension~$n$.

The idea consists in observing the local height pairing
(defined by arithmetic intersection theory) and
defining the measures so that a formula analogous to
the complex one holds.

Let us therefore consider
smooth metrized line bundles $\bar L_j $ (for $0\leq j\leq n$)
as well as regular meromorphic sections~$s_j$ which
have no common zero on~$X$.
There exists a proper model~$\mathfrak X$ of~$\mathrm X$ over~$K^\circ$
and, for each~$j$, a line bundle~$\mathfrak L_j$ on~$\mathfrak X$
which extends some power~$L_j^{e_j}$ of~$L_j$ and which defines
its metric.

Let $\mathrm Z\subset \mathrm X$ be an algebraic $k$-dimensional subvariety
and let $\mathfrak Z$ be its Zariski closure in~$\mathfrak X$;
this is a $(k+1)$-dimensional subscheme of~$\mathfrak X$.
Let's replace it by its normalization
or, more precisely, by its integral closure in its generic fiber.
The local height pairing is then given by intersection theory, as
\[  (\hdiv(s_0)\dots\hdiv(s_k)|\mathrm Z) =  (c_1(\div(s_0|_{\mathfrak Z}))
\dots c_1(\div(s_k|_{\mathfrak Z}))|\mathfrak Z) \, \log\abs\pi^{-1}, \]
where $\div(s_j|_{\mathfrak Z})$ means the divisor of~$s_j$,
viewed as a regular meromorphic section of~$\mathfrak L_j$
over~$\mathfrak Z$. The right hand side means taking
the intersection of the indicated Cartier divisors on~$\mathfrak Z$,
which is a well-defined class of a $0$-cycle supported by
the special fiber of~$\mathfrak Z$; then take its degree
and multiply it by $\log\abs\pi^{-1}$. (Recall that $\pi$ is a fixed
uniformizing element of~$K$; is absolute value does not depend on the
actual choice.)

When one views $s_k|_{\mathrm Z}$
as a regular meromorphic section of~$\mathfrak L_k$ on~$\mathfrak Z$
its divisor has two parts: the first one, say~$H$, is ``horizontal''
and is the Zariski closure of the divisor~$\div(s_k|_Z)$;
the second one, say~$V$, is vertical, \ie, lies in the special fiber
of~$\mathfrak Z$ over the residue field of~$K^\circ$.
This decomposes the local height pairing as a sum
\begin{align*}   (\hdiv(s_0)\dots\hdiv(s_k)|\mathrm Z)  \hskip -2cm\\
& =  (c_1(\div(s_0|_{\mathfrak Z})) \dots c_1(\div(s_k|_{\mathfrak Z}))|\mathfrak Z) \log\abs\pi^{-1} \\
&=  (c_1(\div(s_0|_{\mathfrak Z})) \dots c_1(\div(s_{k-1}|_{\mathfrak Z}))|
\div(s_k|_{\mathfrak Z}))\log\abs\pi^{-1} \\
&=  (c_1(\div(s_0|_{\mathfrak Z})) \dots c_1(\div(s_{k-1}|_{\mathfrak Z}))|H)
\log\abs\pi^{-1}  \\
& \qquad {}
+  (c_1(\div(s_0|_{\mathfrak Z})) \dots c_1(\div(s_{k-1}|_{\mathfrak Z}))|V)
\log\abs\pi^{-1} . \end{align*}
The first term is the local height pairing
of $\div(s_k|_{\mathrm Z})$. Let us investigate the second one.

Let $(V_i)$ be the family of irreducible components of
this special fiber; for each~$i$, let $m_i$ be its multiplicity
in the fiber.
Then, the vertical component~$V$ of~$\div(s_k|_{\mathfrak Z})$
decomposes as
\[ V = \sum_i c_i m_i V_i, \]
where $c_i$ is  nothing but the order of vanishing
of~$s_k$ along the special fiber at the generic point of~$V_i$.
Then,
\begin{multline*}
 (c_1(\div(s_0|_{\mathfrak Z})) \dots c_1(\div(s_{k-1}|_{\mathfrak Z}))|V) \\
= \sum_i c_i m_i (c_1(\div(s_0|_{\mathfrak Z})) \dots c_1(\div(s_{k-1}|_{\mathfrak Z}))|V_i).
\end{multline*}
Since $V_i$ lies within the special fiber of~$\mathfrak X$,
\[ (c_1(\div(s_0|_{\mathfrak Z})) \dots c_1(\div(s_{k-1}|_{\mathfrak Z}))|V_i)
= (c_1(\mathfrak L_0)\dots c_1(\mathfrak L_{k-1})|V_i) ,\]
the multidegree of the vertical component~$V_i$ with respect
to the restriction on the special fiber of the line bundles
$\mathfrak L_0,\dots,\mathfrak L_{k-1}$.

One remarkable aspect of Berkovich's theory is the existence,
for each~$i$, of a unique point~$v_i$ in~$\mathrm Z$ which specializes
to the generic point of~$V_i$. (Here, we use
that $\mathfrak Z$ is integrally closed in its generic fibre.)
Then,
\[ \log \norm {s_k}^{-1} (v_i) = c_i \log\abs\pi^{-1}. \]
Finally,
\begin{multline*}
 (c_1(\div(s_0|_{\mathfrak Z})) \dots c_1(\div(s_{k-1}|_{\mathfrak Z}))|V)\log\abs\pi^{-1}
 \\
= \sum_i \log \norm{s_k}^{-1}(v_i)  (c_1(\mathfrak L_0)\dots c_1(\mathfrak L_{k-1})|V_i) .
\end{multline*}

Let us sum up this calculation: we have introduced points~$v_i\in \mathrm Z$
and decomposed
the local height pairing as a sum:
\begin{multline*}
 (\hdiv(s_0)\dots\hdiv(s_k)|\mathrm Z)
= (\hdiv(s_0)\dots\hdiv(s_{k-1})|\div(s_k|_{\mathrm Z}))
\\ + \sum_i \log\norm{s_k}^{-1}(v_i) m_i (c_1(\mathfrak L_0)\dots c_1(\mathfrak L_{k-1})|V_i)
. \end{multline*}
It now remains to define
\begin{equation}
\label{eq.def-smoothmeasure}
 c_1(\bar L_0)\dots c_1(\bar L_{k-1})\delta _{\mathrm  Z}
 = \sum_i m_i (c_1(\mathfrak L_0)\dots c_1(\mathfrak L_{k-1})|V_i)
 \delta_{v_i} , \end{equation}
where $\delta_{v_i}$ is the Dirac measure at the point~$v_i\in {\mathrm Z}$.
This is a \emph{measure} on~$\mathrm X$, whose support
is contained in~$\mathrm Z$,
and whose total mass equals
\begin{align*}
 \sum_i (c_1(\mathfrak L_0)\dots c_1(\mathfrak L_{k-1})|V_i)
& = (c_1(\mathfrak L_0)\dots c_1(\mathfrak L_{k-1})|V) \\
& = (c_1(L_0)\dots c_1(L_{k-1})|\mathrm Z). \end{align*}
One can also check that it does not depend on the choice of
the section~$s_k$.

With this definition,
the local height pairing obeys an induction formula
totally analogous to the one satisfied in the complex case:
\begin{multline}\label{eq.inductive-u}
 (\hdiv(s_0)\dots\hdiv(s_k)|{\mathrm Z}) \\
{} =(\hdiv(s_0)\dots\hdiv(s_{k-1})|\div(s_k|_{\mathrm Z})) \\
{} + \int_{\mathrm X} \log\norm {s_k}^{-1} c_1(\bar L_0)\dots c_1(\bar L_{k-1})\delta_{\mathrm Z}.\end{multline}

\myparagraph{Local height pairing (admissible metrics)}
\label{sec.local.height.berk.ad}
With the notation of the previous paragraph,
observe that the measures we have defined are positive
when the smooth metrized line bundles are semi-positive.
Indeed, this means that the
line bundles~$\mathfrak L_j$ are numerically effective hence,
as a consequence of the criterion Nakai--Moishezon,
any subvariety of the special fiber has a nonnegative multidegree.

With basically the same argment that the one we sketched
in the complex case, we conclude that the local height pairing
extends by continuity when  semi-positive  metrized
line bundles are approximated by smooth semi-positive
metrized line bundles.  By linearity, this extends
the local height pairing to admissible metrized line bundles.

\myparagraph{Measures (admissible metrics)}
Let us now return to semi-positive metrized line bundles
$\bar L_0,\dots,\bar L_{k-1}$, approximated
by smooth semi-positive metrized line bundles $\bar L_j^{(m)}$.
I claim that for any $k$-dimensional variety~$\mathrm Z\subset\mathrm X$,
the measures $c_1(\bar L_0^{(m)})\dots c_1(\bar L_{k-1}^{(m)})\delta_{\mathrm Z}$ converge to a measure on~$\mathrm X$.

To prove the claim, we may assume that $\bar L_0,\dots,\bar L_{k-1}$
have sections $s_0,\dots,s_{k-1}$ without common zeroes on~$\mathrm Z$.
Let also consider a smooth function~$\phi$
on~$\mathrm X$; let $\bar L_k$ be the trivial line bundle
with the section $s_k=1$, metrized in such a way that $\norm{s_k}=e^{-\phi}$.
Then, one has
\[ \int_{\mathrm X}\phi c_1(\bar L^{(m)}_0)\dots c_1(\bar L^{(m)}_{k-1})\delta_{\mathrm Z}
= (\hdiv(s_0)^{(m)}\dots\hdiv(s_{k-1})^{(m)}\hdiv(s_k)|\mathrm Z);
\]
writing $\bar L_k$ has the quotient of two ample metrized line bundles,
we deduce from the existence of the local height pairing for admissible
metrics that these integrals converge when $m\ra\infty$.
Consequently, the sequence of measures
$(c_1(\bar L^{(m)}_0)\dots c_1(\bar L^{(m)}_{k-1})\delta_{\mathrm Z} )_m
$
converges to a positive linear form on the space of smooth functions.
By a theorem of Gubler (\cite{gubler1998}, Theorem~7.12),
which builds on the Stone-Weierstraß theorem
and the compactness of the Berkovich space~$\mathrm X$,
the space of smooth functions is dense
in the space of continuous complex functions on~$\mathrm X$.
A positivity argument, analogous to the proof that
positive distributions are measures,
then implies that  our linear form is actually a positive measure
which deserves the notation
\[ c_1(\bar L_0)\dots c_1(\bar L_{k-1})\delta _{\mathrm Z}. \]

We then extend this definition by linearity to the
case of arbitrary admissible line bundles. The total mass
of this measure is again the multidegree of~${\mathrm Z}$ with respect
to the line bundles~$L_j$ (for $0\leq j\leq k-1$).

\myparagraph{Integrating Green functions}\label{sec.mahler}
The definition of the convergence of a sequence of measures
is convergence of all integrals against a given \emph{continuous}
compactly supported function. In applications, however, it can be
desirable to integrate against more general functions.
The inductive formula~\eqref{eq.inductive-C}
for the local height pairing in the complex case,
is such an example, as is the interpretation of Mahler measures
of polynomials as (the archimedean component of) heights.
However, its analogue (Equation~\ref{eq.inductive-u})
a priori holds only when $\log\norm{s_0}^{-1}$ is continuous,
that is when the section~$s_0$ has no zeroes nor poles.

The fact that it still holds in the archimedean case
is a theorem of Maillot~\cite{maillot2000}
building on the theory of Bedford--Taylor.
We proved in~\cite[Th.~4.1]{chambert-loir-thuillier2009}
that this relation holds in the ultrametric case too.
The proof (valid both in the ultrametric and archimedean cases)
works by induction,
and ultimately relies on an approximation lemma according
to which any semi-positive Green function~$g$ for a divisor~$D$
is an increasing limit of smooth functions~$(g_n)$
such that, for any~$n$, $g-g_n$ is a semi-positive Green function for~$D$.
In fact, it suffices to pose $g_n=\min(g,n\log\abs\pi^{-1})$;
then, $g-g_n=\max(0,g-n\log\abs\pi^{-1})$ is the maximum of
two semi-positive Green functions, hence is semi-positive.
(In the archimedean case, one needs to further regularize~$g_n$;
see~\cite{chambert-loir-thuillier2009} for details.)

The symmetry of the local height pairing then implies the following
analogue of the Poincar\'e--Lelong formula.
When $\bar L$ is the trivial line bundle, with
the metric defined by an admissible function~$\phi$,
the factor $c_1(\bar L)$ will be written $\ddc\phi$, by analogy
to the complex case.
\begin{CLprop}\label{prop.pl}
Let $\phi$ be a smooth function on~$\mathrm X$ and let $\bar L_1,
\dots,\bar L_k$ be admissible metrized line bundles;
let $\mathrm Z$ be a $k$-dimensional subvariety of~$\mathrm X$
and let $s$ be an invertible meromorphic sections of~$\bar L_1$.
Then,
\begin{multline*}
 \int_{\mathrm X} \phi c_1(\bar L_1)\dots c_1(\bar L_k)\delta_{\mathrm Z}
\\= \int_{\mathrm X} \phi c_1(\bar L_2)\dots c_1(\bar L_k)\delta_{\div(s|\mathrm Z)}
+ \int_X \log\norm s^{-1} \ddc\phi  c_1(\bar L_2)\dots c_1(\bar L_k) \delta_{\mathrm Z}
.\end{multline*}
\end{CLprop}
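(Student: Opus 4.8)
The plan is to reduce the statement to the inductive formula~\eqref{eq.inductive-u}, which is already available for \emph{smooth} metrized line bundles and \emph{continuous} Green functions, and then to pass to the limit along approximations. I would first observe that both sides are multilinear in $\bar L_1,\dots,\bar L_k$ and continuous under uniform approximation of the metrics by smooth semi-positive ones (this is exactly the content of the paragraph on local height pairings for admissible metrics, rephrased via the interpretation of $\int_{\mathrm X}\phi\,c_1(\bar L_1)\dots c_1(\bar L_k)\delta_{\mathrm Z}$ as a local height pairing). So it suffices to prove the identity when $\bar L_1,\dots,\bar L_k$ are smooth and semi-positive. Similarly, since $\phi\in\mathscr C^\infty(\mathrm X)$ is by definition a finite $\R$-linear combination of functions $\log\norm1_{\bar N}^{-1}$ for smooth metrics on the trivial bundle, and since both sides are $\R$-linear in $\phi$ (the term $\int\log\norm s^{-1}\,\ddc\phi\,c_1(\bar L_2)\dots c_1(\bar L_k)\delta_{\mathrm Z}$ is linear in $\ddc\phi$, hence in $\phi$, by multilinearity of the height pairing in its admissible slots), it is enough to treat one such elementary $\phi$; equivalently, to let $\bar L_0$ be the trivial bundle equipped with the metric $\norm{s_0}=e^{-\phi}$, $s_0=1$, so that $c_1(\bar L_0)=\ddc\phi$.

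With these reductions, the identity to be proved becomes
\[
 \int_{\mathrm X}\phi\,c_1(\bar L_1)\dots c_1(\bar L_k)\delta_{\mathrm Z}
 = \int_{\mathrm X}\phi\,c_1(\bar L_2)\dots c_1(\bar L_k)\delta_{\div(s|\mathrm Z)}
 + \bigl(\hdiv(s_0)\,\hdiv(s)\,\hdiv(s_2)\dots\hdiv(s_k)\mid\mathrm Z\bigr)
\]
after recognizing, via~\eqref{eq.inductive-u} applied with $\bar L_0$ in the trivial slot, that $\int_{\mathrm X}\log\norm s^{-1}\,\ddc\phi\,c_1(\bar L_2)\dots c_1(\bar L_k)\delta_{\mathrm Z}$ is the difference between the full height pairing $(\hdiv(s_0)\hdiv(s)\hdiv(s_2)\dots\mid\mathrm Z)$ and $(\hdiv(s_0)\hdiv(s_2)\dots\mid\div(s|\mathrm Z))$. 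The key input is then the \textbf{symmetry} of the local height pairing in the $\hdiv$isors (stated in the archimedean discussion and extended to the non-archimedean admissible case in~\cite{chambert-loir-thuillier2009}): expanding $(\hdiv(s_0)\hdiv(s_1)\dots\hdiv(s_k)\mid\mathrm Z)$ by peeling off $\hdiv(s_1)$ first, via~\eqref{eq.inductive-u}, gives $(\hdiv(s_0)\hdiv(s_2)\dots\mid\div(s_1|\mathrm Z)) + \int_{\mathrm X}\log\norm{s_1}^{-1}c_1(\bar L_0)c_1(\bar L_2)\dots\delta_{\mathrm Z}$; peeling off $\hdiv(s_0)$ first instead gives $(\hdiv(s_1)\hdiv(s_2)\dots\mid\div(s_0|\mathrm Z)) + \int_{\mathrm X}\phi\,c_1(\bar L_1)\dots\delta_{\mathrm Z}$, since $\log\norm{s_0}^{-1}=\phi$ and $c_1(\bar L_0)=\ddc\phi$. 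Equating the two expansions, and noting that the first divisorial term on the second line vanishes or is handled by induction on $\dim\mathrm Z$ (when $s_0=1$ is a frame, $\div(s_0|\mathrm Z)=0$, so that term disappears), isolates precisely the claimed formula after renaming $s_1=s$, $\bar L_1=\bar L$.

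The main obstacle is making the reduction to $s_0=1$ a frame rigorous: the section $s$ of $\bar L_1=\bar L$ in the statement is allowed to be meromorphic, so $\div(s|\mathrm Z)$ is a genuine (possibly nontrivial) cycle, and one must check that all the height pairings appearing are defined, i.e.\ that the relevant divisors have no common point on the relevant subvarieties. I would handle this by a standard moving/density argument: the collection of integrals $\int_{\mathrm X}\phi\,c_1(\bar L_1)\dots\delta_{\mathrm Z}$ as $\bar L_1$ ranges over admissible metrizations of a \emph{fixed} line bundle $L_1$ determines, and is determined by, the measure $c_1(\bar L_1)\dots c_1(\bar L_k)\delta_{\mathrm Z}$ tested against the dense subspace $\mathscr C^\infty(\mathrm X)$ (Gubler's theorem), so once the identity holds for a cofinal family of auxiliary sections $s$ — e.g.\ those whose divisor meets $\mathrm Z$ properly, which exist after twisting by a sufficiently ample bundle — it holds in general by continuity in $\phi$ and by the fact that both sides depend on $s$ only through $\div(s)$ and $c_1(\bar L_1)$. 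A secondary, more bookkeeping-style difficulty is the induction on $k$ and on $\dim\mathrm Z$ needed to dispose of the lower-dimensional divisorial terms; but these are exactly the terms to which the inductive hypothesis of the proposition applies, so the recursion closes.
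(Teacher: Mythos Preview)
Your core argument is correct and is exactly the paper's: introduce the trivial bundle $\bar L_0$ with section $s_0=1$ metrized so that $\log\norm{s_0}^{-1}=\phi$, expand $(\hdiv(s_0)\hdiv(s_1)\dots\hdiv(s_k)\mid\mathrm Z)$ once via the inductive formula with $s_0$ peeled off and once (after swapping $s_0\leftrightarrow s_1$ by symmetry) with $s_1=s$ peeled off, and use $\div(s_0|_{\mathrm Z})=0$ to identify the two resulting expressions with the two sides of the claimed identity. Your preliminary reductions to smooth metrics, the moving argument for~$s$, and the induction on~$\dim\mathrm Z$ are all unnecessary, though: the inductive formula~\eqref{eq.inductive-u} is already available for admissible metrics and singular Green functions (this is precisely the content of the paragraph immediately preceding the proposition, citing \cite[Th.~4.1]{chambert-loir-thuillier2009}), and since $\div(s_0)=\emptyset$ the no-common-point hypothesis on $\mathrm Z$ is automatically satisfied whatever sections $s_1,\dots,s_k$ one picks.
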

\begin{proof}
Let $\bar L_0$ be the trivial line bundle with global section $s_0=1$
and metric defined by $\phi=\log\norm{s_0}^{-1}$. Let $s_1=s$ and,
for $2\leq j\leq k$, let $s_j$ be an invertible meromorphic section
of~$L_j$.
Since $\div(s_0|_{\mathrm Z})=0$,
\[ (\hdiv(s_0)\dots\hdiv(s_k)|\mathrm Z)
= \int_{\mathrm X} \phi c_1(\bar L_1)\dots c_1(\bar L_k)\delta_{\mathrm Z}\]
and
\[ (\hdiv(s_0)\hdiv(s_2)\dots\hdiv(s_k)|\div(s|_{\mathrm Z}))
= \int_{\mathrm X} \phi c_1(\bar L_2)\dots c_1(\bar L_k)\delta_{\div(s|\mathrm Z)}\]
One the other hand, the symmetry of the local height pairing implies that
\begin{align*}
 (\hdiv(s_0)\dots\hdiv(s_k)|\mathrm Z) &=  (\hdiv(s_1)\hdiv(s_0)\dots\hdiv(s_k)|\mathrm Z)  \\
&= (\hdiv(s_0)\hdiv(s_2)\dots\hdiv(s_k)|\div(s|_{\mathrm Z})) \\
& \qquad + \int_{\mathrm X} \log\norm{s}^{-1} c_1(\bar L_0)c_1(\bar L_2)\dots c_1(\bar L_k)\delta_{\mathrm Z}.
\end{align*}
Combining these equations, we obtain the claim.
\end{proof}

\section{Examples}
In this section,  we give some examples of metrics and measures.
Without mention of the contrary, we stick to the non-archimedean case;
basic notation concerning~$K$, $K^\circ$, etc.,
is as in Section~\ref{sec:meas-na}.

\subsection{The projective space}
Let $\mathrm X$ be the projective space~$\mathrm P^n_K$
and let $\mathscr O(1)$ be the tautological line bundle
on~$\mathrm X$, together with its Weil metric.
Let us describe the associated measure, taking the opportunity
to add details concerning Berkovich spaces.

As we remarked above, the Weil metric is induced by the tautological line
bundle on the projective scheme~$\mathfrak X=\mathbf P^n_{K^\circ}$
and is smooth.
The special fiber of~$\mathfrak X$ is the projective space~$\mathbf P^n_{\tilde K}$
over the residue field of~$K^\circ$; it is in particular
irreducible. Moreover, the degree of the
tautological line bundle is equal to~$1$.
The measure $c_1(\overline{\mathscr O(1)})^n$ is therefore equal
to the Dirac mass at the unique point of~$\mathrm X$
which reduces to the generic point of the special fiber.
It remains to describe this point more precisely.

The scheme~$\mathbf P^n_{K^\circ}$ is the union of $(n+1)$ affine
open subsets $\mathfrak U_0,\dots,\mathfrak U_n$
defined by the non-vanishing of the homogeneous coordinates
$x_0,\dots,x_n$. Their generic fibers in the sense of analytic
geometry are
$n+1$ affinoid subsets $\mathrm U_0,\dots,\mathrm U_n$,
which cover~$\mathrm P^n_K$.
In fact, $\mathrm U_i$ corresponds to the set of points~$[x_0:\dots:x_n]$
of~$\mathrm P^n_K$ such that $\abs{x_i}=\max(\abs{x_0},\dots,\abs{x_n})$.

To fix ideas, let us consider~$i=0$. Then,
$\mathfrak U_0=\Spec(K^\circ [T_1,\dots,T_n])$
is the affine space over~$K^\circ$ with coordinates~$T_j=x_j/x_0$.
The natural topology on the algebra $K^\circ[T_1,\dots,T_n]$,
and on its tensor product with~$K$, $K[T_1,\dots,T_n]$,
is induced by the \emph{Gauß norm}
\[ \norm{f} = \max_{\mathbf a\in\N^n} \abs{f_{\mathbf a}},
 \qquad f=\sum f_{\mathbf a} T_1^{a_1}\dots T_n^{a_n}. \]
The completion of~$K[T_1,\dots,T_n]$ for this norm
is the Tate algebra
and is denoted $K\langle T_1,\dots,T_n\rangle$.
It consists of all power series $f=\sum f_{\mathbf a}
 T_1^{a_1}\dots T_n^{a_n}$ with coefficients in~$K$
such that $\abs{f_{\mathbf a}}\ra 0$ when $\abs{\mathbf a}=a_1+\dots+a_n
\ra\infty$; it is endowed with the natural extension
of the Gauß norm, and is complete.
By definition, the generic fiber~$\mathrm U_0$ of~$\mathfrak U_0$
in the sense of analytic geometry is
the Berkovich spectrum of the Tate algebra, that is
the set of all multiplicative semi-norms on it
which are continuous with respect to the topology defined
by the Gauß norm.
Since the theorem of Gauß asserts that this norm is multiplicative,
it defines a point~$\gamma\in \mathrm U_0$, which we like to call
the Gauß point.

The reduction map $\mathrm U_0\ra \mathfrak U_0\otimes {\tilde K}$ is defined
as follows.
Let $x\in \mathrm U_0$,
let $\mathfrak p_x\subset K\langle T_1,\dots,T_d\rangle$
be the kernel of the semi-norm~$x$,
which is also the kernel of the canonical
morphism $\theta_x\colon K\langle T_1,\dots,T_d\rangle\ra \mathscr H(x)$.
The images $T_j(x)$ of the indeterminates~$T_j$
are elements of absolute value~$\leq 1$ of the complete
ultrametric field~$\mathscr H(x)$; they belong
to its valuation ring~$\mathscr H(x)^\circ$.
Letting $\widetilde{\mathscr H(x)}$ to be the residue field,
there exists a unique morphism~$\theta_{\bar x}\colon
{\tilde K}[T_1,\dots,T_d]\ra\widetilde{\mathscr H(x)}$ such that
$\theta_{\bar x}(T_i)$ is the image in~$\widetilde{\mathscr H(x)}$
of $T_i(x)$. The kernel of this morphism is
a prime ideal of the ring~${\tilde K}[T_1,\dots,T_d]$
and defines a point~$\bar x$ in the scheme~$\mathfrak U_0\otimes {\tilde K}$.

Let us now compute the reduction of the Gauß point~$\gamma$.
By definition, the field~$\mathscr H(\gamma)$ is the completion of
the Tate algebra $K\langle T_1,\dots,T_d\rangle$ for the
Gauß norm. I claim that morphism~$\theta_{\bar\gamma}$
is injective, in other words, that the images
of $T_1(\gamma),\dots,T_d(\gamma)$ in the residue field
$\widetilde{\mathscr H(\gamma)}$ are algebraically independent.
Let $P\in K^\circ[T_1,\dots,T_d]$ be any polynomial
whose reduction~$\bar P$ belongs to the kernel of~$\theta_{\bar\gamma}$;
this means $\abs{P}_\gamma<1$; in other words,
the Gauß norm of~$P$ is~$<1$ and each coefficient of~$P$ has
absolute value~$<1$. Consequently, $\bar P=0$ and $\theta_{\bar\gamma}$
is injective, as claimed.
This shows that $\bar \gamma$
is the generic point of the scheme~$\mathfrak U_0\otimes {\tilde K}$.

We thus have proved the following proposition.
\begin{CLprop}
The measure $c_1(\overline{\mathscr O(1)}_\W)^n$ on~$\mathrm P^n_K$
is the Dirac measure at the Gauß point~$\gamma$.
\end{CLprop}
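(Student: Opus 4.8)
The plan is to reduce the computation of $c_1(\overline{\mathscr O(1)}_\W)^n$ to the smooth-metric recipe of formula~\eqref{eq.def-smoothmeasure}, applied with the obvious model, and then to identify the single reduction point that appears. The starting point, already recorded in the excerpt, is that the Weil metric on $\mathscr O(1)$ is the smooth metric attached to the model $(\mathbf P^n_{K^\circ},\mathscr O(1),1)$ of $(\mathrm P^n_K,\mathscr O(1))$; so I may take $\mathfrak X=\mathbf P^n_{K^\circ}$ and $\mathfrak L_j=\mathscr O(1)$ for all $j$ in the definition of the measure, with $\mathrm Z=\mathrm X$ itself.

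First I would invoke that the special fibre $\mathfrak X_{\tilde K}=\mathbf P^n_{\tilde K}$ is irreducible and reduced, so in the notation of~\eqref{eq.def-smoothmeasure} there is a single vertical component $V_0=\mathbf P^n_{\tilde K}$ with multiplicity $m_0=1$. Hence the measure collapses to $\bigl(c_1(\mathscr O(1))\dots c_1(\mathscr O(1))\mid \mathbf P^n_{\tilde K}\bigr)\,\delta_{v_0}$, an $n$-fold self-intersection; since $\deg_{\mathbf P^n_{\tilde K}}\mathscr O(1)=1$ this multidegree equals $1$, and the measure is exactly $\delta_{v_0}$, where $v_0$ is the unique point of $\mathrm P^n_K$ specializing to the generic point of $\mathbf P^n_{\tilde K}$. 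It remains only to check $v_0=\gamma$, the Gauß point. For this I would work in the chart $\mathrm U_0$ with coordinates $T_j=x_j/x_0$, recall the description of the reduction map $\mathrm U_0\to\mathfrak U_0\otimes\tilde K$ given just above the statement, and verify that the reduction $\bar\gamma$ of the Gauß point is the generic point of $\mathfrak U_0\otimes\tilde K$ — but this is precisely the computation already carried out in the paragraph preceding the proposition, namely that $T_1(\gamma),\dots,T_n(\gamma)$ have algebraically independent images in $\widetilde{\mathscr H(\gamma)}$ because any polynomial of Gauß norm $<1$ has all coefficients of absolute value $<1$ and hence vanishing reduction. By the uniqueness of $v_0$, we conclude $v_0=\gamma$, so $c_1(\overline{\mathscr O(1)}_\W)^n=\delta_\gamma$.

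I do not expect a serious obstacle here: every ingredient — smoothness of the Weil metric, irreducibility of the special fibre, the degree-one computation, and the reduction of the Gauß point — is either elementary or already established in the excerpt, so the proof is essentially an assembly of these facts. The only point deserving care is to phrase the application of~\eqref{eq.def-smoothmeasure} correctly: the formula there is stated for $c_1(\bar L_0)\dots c_1(\bar L_{k-1})\delta_{\mathrm Z}$ with $\dim\mathrm Z=k$, so here I take $k=n$, $\mathrm Z=\mathrm X=\mathrm P^n_K$, and all $n$ line bundles equal to $\overline{\mathscr O(1)}_\W$ (there is no $(n{+}1)$-st factor and no section $s_n$ to eliminate, since $\mathrm Z$ is already the whole space). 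With that bookkeeping in place the argument is complete.
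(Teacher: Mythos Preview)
Your proposal is correct and follows essentially the same argument as the paper: the discussion preceding the proposition already records that the Weil metric is smooth for the model $\mathbf P^n_{K^\circ}$, that the special fibre $\mathbf P^n_{\tilde K}$ is irreducible of degree~$1$ with respect to~$\mathscr O(1)$, and that the Gau\ss\ point reduces to its generic point, so the measure formula~\eqref{eq.def-smoothmeasure} yields $\delta_\gamma$ exactly as you describe. Your added bookkeeping on how to instantiate~\eqref{eq.def-smoothmeasure} with $k=n$ and $\mathrm Z=\mathrm X$ is a helpful clarification but not a departure from the paper's route.
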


\subsection{Semi-stable curves and reduction graphs}

In this section, we assume that $\mathrm X$ is the
analytic space associated to a projective curve over
a field~$K$ which is complete for  a discrete valuation.
The semi-stable reduction theorem of Deligne--Mumford
asserts that, up to replacing the base field~$K$
by a finite extension, the curve~$\mathrm X$ has
a projective model~$\mathfrak X$ over~$K^\circ$
which is regular (as a 2-dimensional scheme)
and whose special fiber is reduced, with at most double points
for singularities. We may also assume that
the irreducible components are geometrically irreducible.
We do not require, however, that $\mathfrak X$ is the
minimal semi-stable model.

\myparagraph{The reduction graph of the special fiber}
In that situation, the \emph{reduction graph}~$R(\mathfrak X)$
is a metrized graph defined as follows. It has for vertices the
irreducible components of the special fiber, with as many edges
of length~$\log\abs\pi^{-1}$
between two vertices as the number of intersection
points of the corresponding components.
In a neighborhood of a double point,  $\mathfrak X$
looks like (\ie, has an étale map to)
the scheme with equation $xy=\pi$ in the affine
plane~$\mathbf A^2_{K^\circ}$.

If one replaces the field~$K$ by a finite extension~$K'$,
the base change $\mathfrak X\otimes_{K^\circ} (K')^\circ$
may no longer be regular.  Indeed, $\mathfrak X\otimes_{K^\circ}(K')^\circ$
is étale locally isomorphic to $xy=(\pi')^e$, where
$\pi'$ is a uniformizing element of~$K'$, and $e$
is the ramification index. When $e>1$,
the origin is a singular point of that scheme and one needs
to blow it up repeatedly in order to obtain a regular scheme,
which is a semi-stable model of~$\mathrm X_{K'}$ over~$(K')^\circ$.
The two initial components are replaced by a chain
of $e+1$ components, the $e-1$ intermediate ones being
projective lines. In other words, $e-1$ vertices
have been added, regularly spaced along each edge.
One concludes that the reduction graph
has not changed, as a topological space.
Its metric has not changed either, since the $e$
edges that partition an original edge (of length $\log\abs\pi^{-1}$)
have length~$\log\abs{\pi'}^{-1}=\frac1e \log\abs\pi^{-1}$.

We say that a function on~$R(\mathfrak X)$
is piecewise linear if, up to passing to a finite extension
(which replaces each edge by $e$ edges of length equal to $1/e$th
of the initial one),
it is linear on each edge.

\myparagraph{Drawing the reduction graph on the Berkovich space}
Let us analyse the situation from the Berkovich viewpoint.
As we have seen, the generic points of the special fiber
are the reductions of canonical points of~$\mathrm X$: the vertices
of the graph~$R(\mathfrak X)$ naturally live in~$\mathrm X$.
The same holds for the edges, but is a bit more subtle.
As we have seen, blowing-up intersection points of components
in the special fiber gives rise to new components, hence
to new points of~$\mathrm X$. Would we enlarge the ground
field and blow-up indefinitely, the constellation of points in~$\mathrm X$ that
we draw converges to a graph which is isomorphic to~$R(\mathfrak X)$.

According to Berkovich~\cite{berkovich1999},
a far more precise result holds.
Let us consider a neighborhood~$\mathfrak U$
of a singular point of the special fiber, pretending
it is isomorphic to the locus defined by the equation $xy-\pi$
in~$\mathbf A^2$; so $\mathfrak U=\Spec( K^\circ [ x,y ]/(xy-\pi))$.
Its generic fibre is the affinoid space~$\mathrm U$ defined
by the equality $\abs{xy}=\abs\pi$ in the unit polydisk
$\mathrm B^2=\mathscr M(K\langle x,y\rangle) $.
The affinoid algebra of~$\mathrm U$ is the quotient
\[ K\langle x,y\rangle/(xy-\pi) \]
whose elements~$f$ are (non-uniquely) represented by a series
\[ \sum_{m,n=0}^\infty a_{m,n} x^m y^n, \]
with $a_{m,n}\ra 0$ when $m+n\ra\infty$. However, observing
that $x$ is invertible in this algebra, with inverse $\pi^{-1}y$,
so that $y=\pi x^{-1}$, we can replace
each product $x^my^n$ by $\pi^{n} x^{m-n}$, leading to
an expression of the form
\[ f=\sum_{n\in\Z} a_n x^n, \]
where $\abs{a_n}\ra 0$ when $n\ra +\infty$ and $\abs{a_n}\pi^{-n}\ra 0$
when $n\ra -\infty$. Such an expression is now unique, and is called
the Laurent expansion of~$f$.

It leads to a natural family $(\gamma_r)_{r\in[0,\log\abs\pi^{-1}]}$
of multiplicative seminorms on the algebra $\mathscr O(\mathrm U)$,
parametrized by the unit interval in~$\R$.
Namely, for each real number~$r\in[0,\log\abs\pi^{-1}]$, we can set
\[ \gamma_r (f)  =  \max_{n\in\Z} \abs{a_n}e^{-rn},
    \qquad f=\sum_{n\in\Z}a_n x^n\in\mathscr O(\mathrm U). \]
Obviously, $\gamma_r$ is a norm on~$\mathscr O(\mathrm U)$
which extends the absolute value of~$K$; its multiplicativity
is proved analogously to that of the Gauß norm.
It is easy to check that the map $[0,\log\abs\pi^{-1}]\ra \mathrm U$
defined by $r\mapsto \gamma_r$ is continuous
(this amounts to the fact that the maps $r\mapsto \gamma_r(f)$
are continuous), hence defines
an parametrized path in the topological space~$\mathrm U$.

Let $S(\mathfrak U)$ be its image (with the induced distance);
Berkovich calls it the skeleton of the formal scheme obtained
by completing~$\mathfrak U$ along its special fibre.
A point~$u$ in~$\mathrm U$ has two coordinates $(x(u),y(u))$
in the completed residue field~$\mathscr H(u)$
which are elements of absolute value~$\leq 1$ satisfying $x(u) y(u)=\pi$.
In particular,
\[ r(u) = \log\abs{x(u)}^{-1}  \in [0,\log\abs\pi^{-1}]. \]
The map $\rho\colon u\mapsto \gamma_{r(u)}$ is a continuous
function from~$\mathrm U$
to $S(\mathfrak U)$.

Let us compute the image of~$\gamma_r$ by this map.
By definition of~$\gamma_r$, one has
\[  \abs{x(\gamma_r)} =\gamma_r(x)= e^{-r}, \]
hence $r(\gamma_r)=r$ and $\rho(\gamma_r)=\gamma_r$.
In other words, the map $\rho$ is a retraction of~$\mathrm U$
onto the skeleton~$S(\mathfrak U)$.

The special fiber of~$\mathfrak U$ is defined by the
equation $xy=0$ in~$\mathbf A^2_{\tilde K}$, hence has two components.
One can check that the point $\gamma_0$ reduces to the generic
point of the component with equation~$y=0$, while $\gamma_{\log\abs\pi^{-1}}$
reduces to the generic point of the component with equation~$x=0$.

\medskip

These constructions have to be done around each singular point
of the special fiber of~$\mathfrak X$, locally for the étale
topology of~$\mathfrak X$.
Berkovich proves that they can be glued,
so that the graph $R(\mathfrak X)$ is again canonically
interpreted as an actual metrized graph drawn
on the analytic space~$\mathrm X$; we write $\iota\colon R(\mathfrak X)\hra\mathrm X$ for the canonical embedding.
The map~$\iota$ admits
a continuous retraction $\rho\colon\mathrm X\ra R(\mathfrak X)$.

Although we will not use this fact, we must mention
that the retraction~$\rho$ is a deformation retraction.

\myparagraph{Metrized line bundles and the reduction graph}
A construction of S.~Zhang~\cite{zhang93}, building
on prior results of Chinburg--Rumely~\cite{chinburg-r1993},
furnishes continuous metrics on divisors from continuous functions
on the reduction graph~$R(\mathfrak X)$.
It works as follows. First of all, if $P\in \mathrm X(K)$
is a rational point, there is a unique
morphism $\eps_P\colon\Spec K^\circ\ra\mathfrak X$
which extends the point~$P$ viewed as a morphism from~$\Spec K$
to~$X$. The image of this section is a divisor~$D_P$
on~$\mathfrak X$ and the line bundle $\mathscr O(D_P)$ on~$\mathfrak X$
defines a smooth metric on~$\mathscr O(P)$;
we write $\overline{\mathscr O(P)}_{\mathfrak X}$ for the corresponding
metrized line bundle.
 We also define $\mu_P$ as the Dirac measure at the vertex of the graph
corresponding to the (unique) irreducible component of
the special fiber by which $D_P$ passes through.
The construction and the notation is extended by additivity
for divisors which are sums of rational points.
More generally, if $P$ is only a closed point of~$X$, we do this construction
after the finite extension $K(P)/K$, so that $P$ becomes a sum
of rational points, using for model the minimal resolution
of $\mathfrak X\otimes K(P)^\circ$ described earlier.

If $f$ is any continuous function on~$R(\mathfrak X)$
and $D$ a divisor on~$\mathrm X$, the metrized line bundle $\mathscr O(D+f)_{\mathfrak X}$
is deduced from $\overline{\mathscr O(D)}_{\mathfrak X}$
by multiplying the metric by $e^{-f}$.
When $f$ is piecewise linear, this metrized line bundle
is smooth. To prove that, we may extend the scalars
and assume that $D$ is a sum of rational points $\sum n_j P_j$
and that $f$ is linear on each edge corresponding to
an intersection point of components of the special fiber.
Letting $(V_i)$ being the family of these components,
and writing~$v_i$ for the vertex of~$R(\mathfrak X)$
corresponding to~$V_i$,
the divisor
\begin{equation}
\label{eq.model}
 \sum_j n_j D_{P_j} + \sum_i f(v_i) V_i
\end{equation}
defines the metrized line bundle $\mathscr O(D+f)_{\mathfrak X}$.

In this context, Zhang has defined a curvature operator,
which associates to a metrized line bundle
a distribution on the graph~$R(\mathfrak X)$, defined
in such a way that
\begin{itemize}
\item for any divisor~$D$ on~$\mathrm X$,
$\curv(\overline{\mathscr O(D)}_{\mathfrak X})=\mu_D$;
\item for any continuous function~$f$,
$\curv(\overline O(f)_{\mathfrak X})=-\Delta f$, where $\Delta$ is
the Laplacian operator of the graph~$R(\mathfrak X)$,
\end{itemize}
and depending linearly on the metrized line bundle.
The following lemma compares this construction with
the general one on Berkovich spaces.
\begin{CLlemm}
Let $\bar L=\overline{\mathscr O(D+f)}_{\mathfrak X}$ be a metrized
line bundle on~$\mathrm X$ associated to a divisor~$D$ on~$\mathrm X$
and a continuous function~$f$ on the graph~$R(\mathfrak X)$.
If it is semi-positive, resp.~admissible in the sense of~\cite{zhang93},
then it is semi-positive, resp.~admissible in the sense of this article, and
one has
\[ c_1(\bar L)=\iota_* \curv(\overline{\mathscr O(D+f)}) \log\abs\pi^{-1}.\]
\end{CLlemm}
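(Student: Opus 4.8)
The plan is to reduce everything to the case of a smooth metric, where both sides can be computed explicitly, and then pass to the semi-positive (and finally admissible) case by approximation, exactly as the measures on both sides were constructed. The key observation is that the formula to be proved is linear in $\bar L$ and both sides are continuous with respect to uniform convergence of metrics, so it suffices to treat the generators of the relevant groups. By Zhang's description recalled just above, every smooth metrized line bundle of the form $\overline{\mathscr O(D+f)}_{\mathfrak X}$ with $f$ piecewise linear is given by an honest model, namely the divisor in~\eqref{eq.model}; and by linearity we may assume $D=\sum_j n_j P_j$ is a sum of rational points (after a finite base change, which, as explained in Section~2.2, changes neither $R(\mathfrak X)$ as a metrized graph nor the skeleton inside~$\mathrm X$) and that $f$ is linear on each edge attached to a node.

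First I would check the piecewise-linear smooth case directly. On the one side, the general recipe~\eqref{eq.def-smoothmeasure} applied with $k=n=1$ and $\mathrm Z=\mathrm X$ says that $c_1(\bar L)$ is the sum over irreducible components $V_i$ of the special fiber of the model~\eqref{eq.model}, of $(\text{multiplicity of }V_i)\cdot(\deg_{V_i}\mathfrak L)\,\delta_{v_i}$, times $\log\abs\pi^{-1}$ — but here one must be careful: when $f$ is merely piecewise linear, the correct model is obtained only after subdividing edges by a base change, so the "components" are not just the $V_i$ attached to the original vertices but also the chains of $\mathbf P^1$'s introduced along each edge; the point $v_i\in\mathrm X$ reducing to the generic point of such an intermediate component is exactly one of the interior points $\gamma_r$ of the skeleton analysed in Section~2.2 under the identification $\iota$. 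On the other side, Zhang's curvature operator gives $\curv(\overline{\mathscr O(D+f)})=\mu_D-\Delta f$, where $\mu_D=\sum_j n_j\delta_{v(P_j)}$ and $\Delta f$ is the (combinatorial/metric) Laplacian of the piecewise-linear function $f$ on $R(\mathfrak X)$; this Laplacian is supported at the points where $f$ changes slope, with mass equal to the sum of outgoing slopes. So the whole verification comes down to matching: (a) at a vertex $v_i$, the intersection number $\deg_{V_i}\mathfrak L$ (computed from~\eqref{eq.model} via the intersection pairing on the arithmetic surface $\mathfrak X$, using that $V_i\cdot V_i=-\sum_{j\neq i}V_i\cdot V_j$ on a fibral curve, together with $D_{P_j}\cdot V_i=\delta_{i,i(P_j)}$) equals the coefficient $n_i$ contributed by $D$ minus the contribution of the discrete Laplacian of $f$ at $v_i$; and (b) along the interior of an edge of length $\log\abs\pi^{-1}$ subdivided into $e$ pieces, the degree of $\mathfrak L$ on each interpolating $\mathbf P^1$ is $f(v_{i-1})-2f(v_i)+f(v_{i+1})$ in the rescaled coordinate, which is precisely (edge length $\times$) the second difference, i.e. the density of $\Delta f$ there, and the push-forward $\iota_*$ places this mass at the right interior point of the skeleton. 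Tracking the factor $\log\abs\pi^{-1}$ through the rescaling of edge lengths under base change is the bookkeeping one must not botch, but it comes out on the nose.

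Next I would handle semi-positivity. One direction of the equivalence is essentially by definition: if $\bar L$ is semi-positive in Zhang's sense, then $\curv(\bar L)\geq 0$ as a measure on the graph, and by the above computation in the piecewise-linear case this forces every fibral intersection number $\deg_{V_i}\mathfrak L$ to be $\geq 0$, i.e. $\mathfrak L_{\tilde K}$ is numerically effective on each component — which on a curve is the Nakai–Moishezon condition — so the smooth metric is semi-positive in the sense of Section~1.3; since Zhang's admissible metrics are uniform limits of such piecewise-linear semi-positive ones, and our semi-positive metrics are also defined as uniform limits of smooth semi-positive ones, the two notions agree, and admissibility follows by taking differences. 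For the converse inclusion one uses that on a curve numerical effectiveness of a fibral line bundle is equivalent to positivity of the associated discrete Laplacian data, which is Zhang's semi-positivity. Finally, having the equivalence of notions, the identity $c_1(\bar L)=\iota_*\curv(\overline{\mathscr O(D+f)})\log\abs\pi^{-1}$ extends from the piecewise-linear smooth case to all semi-positive, hence all admissible, $\bar L$: the left side is defined by the limit procedure of Section~1.3 (tested against smooth, hence against all continuous, functions on $\mathrm X$), the right side is $\iota_*$ of the corresponding limit of curvature measures on $R(\mathfrak X)$, and $\iota_*$ is continuous for weak convergence because $\iota$ is a continuous map of compact spaces; so both sides are the weak limit of the same sequence.

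The step I expect to be the main obstacle is the explicit matching in the smooth piecewise-linear case — specifically, identifying the interior points of Berkovich's skeleton $S(\mathfrak U)$ (the seminorms $\gamma_r$ of Section~2.2) with the points $v_i\in\mathrm X$ reducing to the generic points of the exceptional $\mathbf P^1$'s created by the base-change-and-blow-up, and checking that the intersection-theoretic multidegrees on those $\mathbf P^1$'s reproduce exactly the metric Laplacian $\Delta f$ with the correct normalization $\log\abs\pi^{-1}$. Everything else is either formal (linearity, density of smooth functions via Gubler's theorem, continuity of $\iota_*$) or a direct translation between the Nakai–Moishezon criterion on a fibral curve and positivity of a graph Laplacian.
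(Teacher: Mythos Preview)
Your plan is essentially the paper's own proof: reduce by linearity and base change to the case where $D$ is a sum of rational points and $f$ is linear on each edge, compute both sides via the intersection numbers $\deg(\mathfrak L|V_i)$ on the model~\eqref{eq.model} using $V_i\cdot V_i=-\sum_{j\neq i}V_i\cdot V_j$, identify this with $\mu_D-\Delta f$, and then pass to the semi-positive and admissible cases by uniform approximation. One minor remark: the lemma only asserts one implication (Zhang semi-positive $\Rightarrow$ semi-positive here), so your discussion of the converse is not needed; and in the base case where $f$ is already linear on each original edge there is no subdivision and no interior $\mathbf P^1$'s to worry about, so the matching you flag as ``the main obstacle'' reduces to the vertex computation you already outlined, with the piecewise-linear case handled afterwards by the base-change argument exactly as you say.
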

In other words, the measure $c_1(\bar L)$
is supported by the graph~$R(\mathfrak X)$ where
it coincides essentially with Zhang's curvature.
\begin{proof}
We first assume that $f$ is linear on each edge of~$R(\mathscr X)$
and that $D$ is a sum of rational points of~$X$.
Then, $\bar L$ corresponds to the line bundle~$\mathfrak L$ on
the model~$\mathfrak X$ given by Equation~\ref{eq.model}.
By definition, the measure $c_1(\mathfrak L)$ is computed as follows.
It is a sum, for all components~$V_i$ of the special fiber,
of $\deg(\mathfrak L|V_i)\log\abs\pi^{-1}$
times the Dirac measure at the corresponding point~$v_i$
of~$R(\mathfrak X)$. In particular, it is supported by~$R(\mathfrak X)$.
Then,
\[ \deg(\mathfrak L|V_i)
= \sum_j n_j \left\{\begin{array}{cc}
         1 & \text{if $D_{P_j}$ passes through $V_i$;}\\
        0 & \text{otherwise}\end{array} \right\}
+ \sum_j f(V_j) (V_i,V_j), \]
where $(V_i,V_j)$ is the intersection number of the divisors~$V_i$
and~$V_j$. That $D_{P_j}$ passes through~$V_i$
means exactly that $\rho(P_j)=v_i$.
Moreover, if $j\neq i$, then $(V_i,V_j)=m_{i,j}$ is just the
number of  intersection points of~$V_i$ and~$V_j$,
while
\[ (V_i,V_i)=(V_i,\sum_j V_j) - \sum_{j\neq i} (V_i,V_j)
 = -\sum_{j\neq i} (V_i,V_j), \]
since the whole special fiber is numerically equivalent to zero.
Consequently,
\[ \sum_j f(v_j) (V_i,V_j)
   = \sum_{j\neq i} m_{i,j} \big(f(V_j) - f(V_i)\big). \]
Observe that this is the sum, over all edges from~$V_i$,
of the derivative of~$f$ along this edge.
Comparing with the definitions given by Zhang in~\cite{zhang93},
one finds, for any function~$g$ on~$R(\mathfrak X)$
\begin{align*}
\sum_i \deg(\mathfrak L|V_i) g(v_i)
& = \sum_{j}  n_j  g(\rho(P_j))
 +\sum_i \langle \delta f(v_i), g\rangle \\
&= \int_{R(\mathfrak X)} g \, (\mu_{D} +\delta f)\\
& =\int_{R(\mathfrak X)} g\, \curv(\overline{\mathscr O(D+f)}). \end{align*}
This proves the claimed formula when $f$ is linear on each
edge of~$\mathfrak X$ and $D$ is a sum of rational points.

By working over an appropriate finite extension of~$K$,
it extends to the case where $f$ is only piecewise linear,
$D$ being any divisor on~$X$.

Zhang defines $\overline{\mathscr O(D+f)}_{\mathfrak X}$
to be semi-positive
if $f$ is a uniform limit of piecewise linear functions~$f_n$
such that $\curv(\overline{\mathscr O(D+f_n)}_{\mathfrak X})\geq 0$.
The metrized line bundle~$\bar L$ is then the limit of
the metrized line bundles~$\bar L_n$
corresponding to models $\mathfrak L_n$ (on appropriate
models~$\mathfrak X_n$ of~$\mathrm X$ after some extension
of scalars) of~$\mathscr O(D)$. By the previous
computation, these metrics are smooth and $c_1(\bar L_n)\geq 0$.
Reversing the computation, this means that $\mathfrak L_n$
is numerically effective on~$\mathfrak X_n$, hence
$\bar L$ is semi-positive.
By definition of the measure~$c_1(\bar L)$, one has
\begin{align*}
 c_1(\bar L) & =\lim_n c_1(\bar L_n)
 =\lim_n \iota_* \curv(\overline{\mathscr O(D+f_n)})\\
& =\iota_* \lim_n \curv(\overline{\mathscr O(D+f_n)})
=\iota_* \curv(\overline{\mathscr O(D+f)}). \end{align*}

The case of an admissible metrized line bundle follows by linearity.
\end{proof}

\subsection{Local character of the measures}
The definition of the measures associated to metrized line
bundles is global in nature.
Still, the main result of this section implies
that they are local.

\begin{CLdefi}
Let $\mathrm X$ be an analytic space. A function on~$\mathrm X$
is said to be \emph{strongly pluriharmonic} if it is
locally a uniform limit of functions of the form $a\log\abs u$,
where $a\in\R$ and $u$ is holomorphic and nonvanishing.
\end{CLdefi}

There is a general theory of harmonic functions on curves
due to Thuillier~\cite{thuillier2005}
(see also~\cite{favre-rl2007,baker-rumely2010} on the projective line;
note that the definition of a strongly harmonic function of the latter reference
is different from the one adopted here).
Strongly pluriharmonic functions are harmonic in their sense.
Indeed, logarithms of absolute values of invertible holomorphic
functions are harmonic, and harmonic functions are preserved
by uniform limits
(Prop.~2.3.20 and~3.1.2 of~\cite{thuillier2005}).
In fact, when the residue field of~$K$ is algebraic over a finite
field, any harmonic function is locally of the form
$a\log\abs u$, where $a\in\R$ and $u$ is 
an invertible holomorphic function
(\loccit, Theorem~2.3.21).

This is not necessarily the case for more general fields~$K$:
there are harmonic functions over analytic curves
which are not locally equal to the logarithm of the absolute value
of an invertible function;
examples require to consider curves of genus~$\geq  1$.
In a conversation with A.~Ducros, we devised the following example
of a one-dimensional affinoid space. Let $\mathfrak E$ be
an elliptic scheme over~$K^\circ$, let $o$ be the origin in~$\mathfrak E_{\tilde K}$ and let $p$ be a \emph{non-torsion} rational point
in~$\mathfrak E_{\tilde K}$; let $\mathfrak X$ be the blow-up
of~$\mathfrak E$ at the point~$p$. Let then $\mathfrak U$ be its
open subset obtained by removing the point~$o$ as well as
a smooth point in the exceptional divisor of the blow-up;
its generic fiber~$\mathrm U$ is the desired affinoid space
--- it is the complementary subset in the elliptic curve~$\mathrm E_K$
to two small disjoint disks.
One can prove that the space of harmonic functions
on~$\mathrm U$ is 2-dimensional, and that all holomorphic invertible
functions on~$\mathrm U$ have constant absolute value.

I do not know whether any harmonic function on a curve is \emph{locally}
a uniform limit of logarithms.

\begin{CLdefi}
Let $\bar L$ be a metrized line bundle
on an analytic space~$\mathrm X$ and let $\mathrm U$
be an open subset of~$\mathrm X$.  One says
that $\bar L$ is strongly pluriharmonic on~$\mathrm U$
if for any local frame~$s$ of~$L$
defined on an open subset~$\mathrm V\subset\mathrm U$,
$\log\norm s^{-1}$ is strongly pluriharmonic on~$\mathrm V$.
\end{CLdefi}
A metrized line bundle is strongly pluriharmonic on~$\mathrm U$
if it admits, in a neighbourhood of any point of~$\mathrm U$,
a local frame whose norm is identically equal to~$1$.
(For the converse to hold, one would need to introduce
a notion of local frame for real line bundles.)

\begin{CLprop}\label{prop.local}
Let $\mathrm X$ a the analytic space associated to a proper $K$-scheme.
Let $\bar L_1,\bar L_2,\dots,\bar L_k$ be admissible metrized line bundles
on~$\mathrm X$. Let $\mathrm Z$ be a $k$-dimensional
Zariski closed subset of~$\mathrm X$.
Assume that $\bar L_1$ is strongly pluriharmonic on~$\mathrm U$.
Then, the support of the measure
$c_1(\bar L_1)\dots c_1(\bar L_k)\delta_{\mathrm Z}$
is disjoint from~$\mathrm U$.
\end{CLprop}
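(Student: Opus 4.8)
The plan is to reduce the statement to the case treated in the construction of the measures (the smooth case, then the semi-positive case), using the multilinearity of the measure in the line bundles $\bar L_1,\dots,\bar L_k$ and the local, uniform-limit definition of strong pluriharmonicity. First I would localize: strong pluriharmonicity of $\bar L_1$ on $\mathrm U$ means that every point $x\in\mathrm U$ has a neighbourhood $\mathrm V$ and a local frame $s$ of $L_1$ on $\mathrm V$ with $\log\norm s^{-1}$ a uniform limit of functions $a_n\log\abs{u_n}$, $a_n\in\R$, $u_n$ invertible holomorphic on $\mathrm V$. It suffices to show the measure gives no mass to such a $\mathrm V$ (shrinking if necessary so that it is, say, an affinoid domain), i.e. that $\int_{\mathrm X}\phi\,c_1(\bar L_1)\cdots c_1(\bar L_k)\delta_{\mathrm Z}=0$ for every continuous $\phi$ supported in $\mathrm V$.

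Next I would use Proposition~\ref{prop.pl} (the non-archimedean Poincar\'e--Lelong formula) to convert $c_1(\bar L_1)$ into a divisorial term plus a $\ddc$ term. Writing $\bar L_1$ on $\mathrm V$ as $\overline{\mathscr O(\div(s))}\otimes(\text{trivial bundle with metric }e^{-g})$ where $g=\log\norm s^{-1}$, and noting that $\div(s)$ does not meet $\mathrm V$ (since $s$ is a frame there), the divisorial term contributes nothing to an integral against $\phi$ supported in $\mathrm V$. Hence, for such $\phi$,
\[
 \int_{\mathrm X}\phi\,c_1(\bar L_1)\cdots c_1(\bar L_k)\delta_{\mathrm Z}
 = \int_{\mathrm X}\phi\,\ddc g\;c_1(\bar L_2)\cdots c_1(\bar L_k)\delta_{\mathrm Z}.
\]
By symmetry of the local height pairing this equals $\int_{\mathrm X} g\,\ddc\phi\;c_1(\bar L_2)\cdots c_1(\bar L_k)\delta_{\mathrm Z}$ when $\phi$ is smooth; the point is then that for $g$ of the form $a\log\abs u$ with $u$ invertible holomorphic on $\mathrm V$, the metrized trivial bundle with metric $e^{-g}$ is \emph{already} a smooth metrized line bundle admitting a frame of norm $\equiv 1$ (namely $u^{a}$ up to scaling, interpreted via the model on which $\div(u)$ is trivial), so its curvature measure $\ddc g$ vanishes identically on $\mathrm V$ — indeed $\ddc\log\abs u=0$ wherever $u$ is invertible, by the very definition of the smooth measure as a combination of multidegrees of vertical components, all of which are zero when the defining line bundle on the model is trivial over the relevant open locus. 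Passing to the uniform limit $g=\lim a_n\log\abs{u_n}$ and using the continuity of the local height pairing (hence of the measure-valued product) in the admissible metric, established in Section~\ref{sec:meas-na}, the term $\int g\,\ddc\phi\,c_1(\bar L_2)\cdots$ is the limit of terms each of which vanishes on $\mathrm V$, so it vanishes too.

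Finally I would remove the auxiliary hypotheses. The reduction above assumed $\bar L_2,\dots,\bar L_k$ and the test function could be handled by the Poincar\'e--Lelong calculus; to make the symmetry argument legitimate one wants $\bar L_1$ genuinely admissible, which it is by hypothesis, and one wants $\mathrm Z$, $s_j$ in general position, which can be arranged by the usual moving of sections (as in the construction of the measures) since the statement is about the support, a closed condition. Then one covers $\mathrm U$ by such neighbourhoods $\mathrm V$ and concludes that the support of $c_1(\bar L_1)\cdots c_1(\bar L_k)\delta_{\mathrm Z}$ meets no $\mathrm V$, hence is disjoint from $\mathrm U$. The main obstacle I anticipate is the rigorous justification that $\ddc(a\log\abs u)=0$ as a measure on the locus where $u$ is invertible: one must exhibit, Zariski-locally on a suitable model, that the section $u$ extends to a nowhere-vanishing section of a trivial line bundle over the formal open set whose generic fiber covers $\mathrm V$, so that the associated smooth metric has a norm-one frame and its curvature measure — a sum of vertical multidegrees — is the zero measure; handling the case where $u$ only becomes such after a finite extension of $K$ and a blow-up, and checking this is compatible with the uniform limit, is the delicate bookkeeping step.
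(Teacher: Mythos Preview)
Your proposal is correct and follows essentially the same route as the paper: reduce to a smooth test function~$\phi$, apply the Poincar\'e--Lelong formula (Prop.~\ref{prop.pl}) to trade $c_1(\bar L_1)$ for a divisorial term (which vanishes since $\div(s)$ avoids the support of~$\phi$) plus $\int\log\norm s^{-1}\ddc\phi\cdots$, pass to the uniform limit in the approximants of $\log\norm s^{-1}$, and observe that each approximant contributes zero because the trivially metrized trivial bundle has vanishing curvature measure.

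Two minor points where the paper is more direct than your sketch. First, the paper works with a \emph{global} regular meromorphic section~$s$ of~$L_1$ whose divisor misses~$\mathrm U$ (and similarly takes the $u_n$ to be global rational functions), rather than a local frame on~$\mathrm V$; this makes Prop.~\ref{prop.pl} immediately applicable and sidesteps the localization bookkeeping. Second, your ``main obstacle'' --- showing $\ddc(a\log\abs u)=0$ where $u$ is invertible --- is disposed of in one line: once $u_n$ is a rational function, the line bundle~$\bar M_n$ in play is the \emph{trivial} line bundle~$\mathscr O_X$ with its \emph{trivial} metric (and $u_n$ as meromorphic section), so $c_1(\bar M_n)=0$ directly from the defining formula~\eqref{eq.def-smoothmeasure}; no model-theoretic extension argument is needed. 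The paper also explicitly reduces $\bar L_2,\dots,\bar L_k$ to smooth metrics and invokes Gubler's density theorem to pass from continuous to smooth~$\phi$ with support in~$\mathrm U$, steps you mention but leave to the ``cleanup'' phase.
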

\begin{proof}
One has to show that for any continuous function~$\phi$
with compact support contained in~$\mathrm U$
\[ \int_{\mathrm X} \phi \, c_1(\bar L_1)\dots c_1(\bar L_k)\delta_{\mathrm Z}=0.\]
By Gubler's theorem, the space of smooth functions is dense
in the space of continuous functions on~$\mathrm X$.
Using the fact that the maximum and the minimum of smooth functions
are still smooth,
one proves that the space of smooth functions with compact support
contained in~$\mathrm U$ is dense in the space of continuous functions
with compact support contained in~$\mathrm U$, for the topology
of uniform convergence.
We thus may assume that $\phi$ is smooth, with compact support
contained in~$\mathrm U$. Finally, we may also assume that
the metric on the line bundles~$\bar L_2,\dots,\bar L_k$ are smooth.

We may argue locally and assume that $L_1$ has a meromorphic
section~$s$ whose divisor $\div(s)$ is disjoint from~$\mathrm U$.
Up to shrinking~$\mathrm U$ again, we may assume that
there exists a sequence $(u_n)$ of rational
functions without zeroes nor poles on~$\mathrm U$ such that
$\log\norm s=\lim \log \abs{u_n}^{1/n}$.


According to Prop.~\ref{prop.pl}, one has
\begin{multline*}
\int_{\mathrm X} \phi\,c_1(\bar L_1) \dots c_1(\bar L_k) \delta_{\mathrm Z} \\
=
\int_{\mathrm X} \phi c_1(\bar L_2)\dots c_1(\bar L_k)\delta_{\div(s|_{\mathrm Z})}
 + \int_{\mathrm X} \log\norm s^{-1} \ddc\phi c_1(\bar L_2)\dots c_1(\bar L_k) \delta_{\mathrm Z} ,\end{multline*}
The first term vanishes because $\div(s|_{\mathrm Z})$
and the support of~$\phi$ are disjoint.
The second is the limit of
\[  \int_{\mathrm X}  \log\abs {u_n}^{-1/n}
    \ddc\phi c_1(\bar L)^{k-1}\delta_{\mathrm Z} . \]
Using the fact that $\div(u_n)\cap\mathrm U$
is empty and applying the same computation, the term of index~$n$ equals
\[
\frac1n \int_{\mathrm X} \phi \, c_1(\bar M_n)  c_1(\bar L_2)\dots c_1(\bar L_k)\delta_{\mathrm Z}. \]
where $\bar M_n$ is the trivial metrized line bundle~$\mathscr O_X$,
and its meromorphic section~$u_n$ replacing~$s$.
But this integral is zero, by the formula~\eqref{eq.def-smoothmeasure}
which defines the measure associated to smooth metrized line bundles.
\end{proof}

\subsection{Polarized dynamical systems}
We now explain another example of metrized line bundles:
the canonical metric associated to a dynamical system.

\begin{CLlemm}\label{lemm.tate}
Let $\mathrm X$ be the analytic space associated to a proper $K$-scheme
and let $f\colon\mathrm X\ra\mathrm X$ be a finite
morphism. Let $L$ be a line bundle on~$\mathrm X$,
$d$ an integer such that $d\geq 2$ and an isomorphism
$\eps\colon f^*L\simeq L^d$.
The line bundle $L$ possesses a unique continuous metric
such that the isomorphism~$\eps$ is an isometry.
If $L$ is ample, then this metric is semi-positive.
\end{CLlemm}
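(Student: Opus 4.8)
The plan is to construct the metric by the standard Tate‑style fixed‑point argument and then verify semipositivity through the approximation picture that underlies the definition of continuous semi‑positive metrics in this paper.

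\medskip

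First I would fix any continuous metric $\norm{\cdot}_0$ on $L$ (one exists, e.g.\ a smooth one, since every line bundle has a smooth metric), and consider the pull‑back construction. Given a continuous metric $\norm{\cdot}$ on $L$, the isomorphism $\eps\colon f^*L\simeq L^{d}$ transports the metric $\norm{\cdot}^{\otimes d}$ on $L^{d}$ to a metric on $f^*L$, which by pull‑back (using $\abs{f^*g}=\abs g\circ f$) is of the form $\phi^*$ of a metric on $L$; call the resulting metric $\Phi(\norm{\cdot})$. Concretely, if $s$ is a local frame, $-\log\Phi(\norm s) = \tfrac1d\bigl(-\log\norm{\eps(f^*s)}\bigr)$, so on the level of the continuous functions measuring the metric relative to $\norm{\cdot}_0$, the operator $\Phi$ is $h\mapsto \tfrac1d (h\circ f) + \tfrac1d c$ for a fixed continuous function $c$ on $\mathrm X$ determined by $\eps$ and $\norm{\cdot}_0$. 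Since $\mathrm X$ is compact (Berkovich space of a proper scheme) and $d\ge 2$, this is a contraction of the Banach space $\mathscr C(\mathrm X)$ with ratio $1/d$ for the sup norm. By the Banach fixed point theorem there is a unique continuous $h_\infty$ with $\Phi(h_\infty)=h_\infty$; the corresponding metric is the unique continuous metric making $\eps$ an isometry. Uniqueness is exactly the uniqueness of the fixed point. This disposes of the first assertion.

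\medskip

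For the semipositivity statement, assume $L$ ample. The point is that iterating $\Phi$ from a good starting metric converges uniformly to the canonical one, and each iterate is smooth and semi‑positive, so the limit is semi‑positive by definition. I would start from a smooth metric $\norm{\cdot}_0$ on $L$ that is \emph{ample} in the sense of this paper (possible since $L$ is ample: ample line bundles admit ample metrics). Then $\Phi^m(\norm{\cdot}_0)$ is again smooth, because $f$ is a morphism and pull‑back of a smooth metric is smooth, tensor powers of smooth metrics are smooth, and $\eps$ is an isomorphism of line bundles; moreover pull‑back of a semi‑positive smooth metric is semi‑positive, and a tensor power and rescaling by the isometry preserve semipositivity, so each $\Phi^m(\norm{\cdot}_0)$ is smooth and semi‑positive. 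The contraction estimate gives $\norm{\,\Phi^m(\norm{\cdot}_0) - \norm{\cdot}_\infty\,} \le d^{-m}\norm{\,\Phi(\norm{\cdot}_0)-\norm{\cdot}_0\,}\to 0$ uniformly, so $\norm{\cdot}_\infty$ is a uniform limit of smooth semi‑positive metrics, i.e.\ semi‑positive in the sense of the paper. (A small care: one must check ampleness of $\norm{\cdot}_0$ really does give \emph{semi}‑positivity of $\Phi(\norm{\cdot}_0)$, not just of the limit; semipositivity of smooth pull‑backs and tensor powers handles this, since $f^*$ preserves nefness of the special fibre and tensor powers add nef classes.)

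\medskip

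The main obstacle I anticipate is not the fixed‑point step, which is routine once one sees that everything takes place in $\mathscr C(\mathrm X)$ with the sup norm and that $\mathrm X$ compact makes this complete; rather it is the bookkeeping showing that $\Phi$ genuinely preserves the class of \emph{smooth semi‑positive} metrics, because smoothness here is defined through formal models over $K^\circ$, and one must argue that the operations involved — pulling back along the morphism $f$, passing to $d$‑th tensor powers, and twisting by the isomorphism $\eps$ — all have model‑theoretic incarnations compatible with numerical effectivity of the special fibre. The cleanest route is to invoke the facts already recorded in the excerpt: pull‑back of a smooth (resp.\ smooth semi‑positive) metric along a morphism is smooth (resp.\ smooth semi‑positive), the tensor product of smooth semi‑positive metrics is smooth semi‑positive, and an isometric identification via an isomorphism of line bundles changes nothing; chaining these gives that $\Phi$ maps smooth semi‑positive metrics to smooth semi‑positive metrics, and then the uniform convergence argument finishes the proof.
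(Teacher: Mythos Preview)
Your argument is correct and is essentially the paper's own proof: Tate's contraction on $\mathscr C(\mathrm X)$ for existence and uniqueness, then semipositivity by starting from a smooth semi-positive metric (available since $L$ is ample) and observing that each iterate stays smooth semi-positive, so the uniform limit is semi-positive by definition. One small wrinkle: your prose description of $\Phi$ runs in the wrong direction (a metric on $f^*L$ is not in general the pull-back of one on $L$; the iteration goes $\norm\cdot$ on $L \rightsquigarrow f^*\norm\cdot$ on $f^*L \rightsquigarrow$ via $\eps$ a metric on $L^d \rightsquigarrow d$-th root on $L$), but your formula $h\mapsto \tfrac1d(h\circ f)+\tfrac1d c$ is the correct operator, and the paper supplements the abstract pull-back facts you cite with the concrete model-theoretic picture you anticipate as the ``bookkeeping'' step (normalize $\mathfrak X_0$ in $X$ along $f$ to get $f_1\colon\mathfrak X_1\to\mathfrak X_0$, set $\mathfrak L_1=f_1^*\mathfrak L_0$, and iterate).
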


In essence, this result, or at least its proof, goes back
to Tate's construction of the ``Néron--Tate'' canonical height for abelian
varieties. In the slightly different language of local heights
and N\'eron functions,
it has been proved by Call--Silverman~\cite{call-s93}.
In the asserted form, it is due to Zhang~\cite{zhang95b}.

\begin{proof}
Let us first prove uniqueness. If $\bar L$ and~$\bar L'$
are two metrics on~$L$,
let $\phi$ be the continuous
function such that $\norm\cdot'=e^{-\phi}\norm\cdot $.
Assuming that $\eps$ is an isometry for these two metrics,
one obtains the following equation
\[ \phi(f(x))= d \phi(x), \]
for any $x\in \mathrm X$. Since $\mathrm X$ is compact,
$\phi$ is bounded and this equation implies
that $\norm\phi_\infty\leq \frac1d \norm\phi_\infty$.
Since $d\geq 2$, one concludes that $\phi\equiv 0$.

For the existence, one begins with any continuous
metric~$\bar L_0$ on~$L$.
Let us then consider the sequence of metrics~$(\bar L_n)$ on~$L$
induced by the pull-backs on $L^{d}=\eps f^*L$,
$L^{d^2}=(\eps f^*)^2L$, etc., hence on~$L$.
Since $d\geq 2$, a similar contraction argument as the
one used for uniqueness shows that
this is a Cauchy sequence of metrics on~$L$; consequently, it converges
to a continuous metric on~$L$. If $\bar L_0$ is chosen to be semi-positive,
which we may if $L$ is ample, then all al of the metrized line bundles~$\bar L_n$ are semi-positive, hence the canonical metric is semi-positive.

Concretely, in the non-archimedean case, one begins with a
model $(\mathfrak X_0,\mathfrak L_0,e)$
such that $\mathfrak L_0$ is numerically effective.
Then one considers the map $f\colon X\ra\mathfrak X_0$
and the normalization~$\mathfrak X_1$ of~$\mathfrak X_0$ in~$X$;
this is a projective model~$\mathfrak X_1$, equiped with a finite
morphism $f_1\colon \mathfrak X_1\ra\mathfrak X_0$ extending~$f$.
Moreover, $\mathfrak L_1=f_1^*\mathfrak L_0$ is a model of~$f^*L^e$
which is identified with~$L^{ed}$ via the fixed isomorphism~$\eps$.
Iterating this construction defines a sequence
$(\mathfrak X_n,\mathfrak L_n,ed^n)$ of models of~$(X,L)$,
with finite morphisms $f_n\colon\mathfrak X_n\ra \mathfrak X_{n-1}$
such that $f_n^*\mathfrak L_{n-1}=\mathfrak L_n$.
The metric on~$L$ defined by any of these models is semi-positive,
hence so is their uniform limit.
\end{proof}

\myparagraph{The canonical measure}
The measure $c_1(\bar L)^n$ on~$\mathrm X$
defined by the metrized line bundle~$\bar L$
is a very important invariant of the dynamical system.
It satisfies the functional equations
\[ f^* c_1(\bar L)^n = d^n c_1(\bar L)^n \quad\text{and}\quad
f_* c_1(\bar L)^n=c_1(\bar L)^n.
\]
The first follows by a general functorial property
proved in~\cite{chambert-loir2006};
it implies the second.
The support of the canonical measure is therefore totally
invariant under~$f$.

\myparagraph{The Fatou set}
Generalizing results of Kawaguchi--Silverman in~\cite{kawaguchi-silverman2009}
and Baker--Rumely~\cite{baker-rumely2010}, we want to show here that
the canonical measure
vanishes on any open set~$\mathrm U$ of~$\mathrm X$
where the sequence $(f^n|_{\mathrm U})$ of iterates of~$f$ is equicontinuous.

Let $\mathrm U$ be an open set in~$\mathrm X$
and $\mathscr F$ be a family of continuous maps from~$\mathrm U$
to~$\mathrm X$. One says that this family is equicontinuous
if for any $x\in\mathrm U$ and
any finite open covering $(\mathrm V_j)$ of~$\mathrm X$,
there exists a neighbourhood~$\mathrm U_x$
of~$x$ in~$\mathrm U$ such that for any $\phi\in\mathscr F$,
there exists an index~$j$ such that $\phi(\mathrm U_x)\subset \mathrm V_j$.
(This definition is adapted from Definition~10.63
in~\cite{baker-rumely2010};
it is the definition of equicontinuity associated to
the canonical uniform structure of the \emph{compact} space~$\mathrm X$.)

We define the equicontinuous locus of~$f$
as the largest open subset~$\mathrm E_f$ of~$\mathrm X$
over which the sequence of iterates of~$f$ is equicontinuous.


\begin{CLprop}
If $L$ is ample, then the metric~$\bar L$
is strongly pluriharmonic on~$\mathrm E_f$.\footnote{%
The ampleness assumption should not be necessary for the result to hold.}
\end{CLprop}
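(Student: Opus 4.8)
The plan is to transcribe the complex-analytic argument of Hubbard--Papadopol, exactly as for the analogous statements recalled above: on $\mathrm E_f$ the orbit $x,f(x),f^2(x),\dots$ stays, uniformly in a neighbourhood of $x$, inside a fixed member of a finite affine cover, so that pulling back along $f^n$ a frame on which a fixed reference metric is constant produces, after rescaling, approximations of the canonical metric by absolute values of invertible holomorphic functions.

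First I would fix the combinatorial data. Since $L$ is ample, pick $e\geq1$ and a closed immersion $\iota\colon\mathrm X\hra\P^N_K$ with $\iota^*\mathscr O(1)\simeq L^e$. Let $\mathrm U_j\subset\P^N_K$ be the standard affinoid domain $\{\abs{x_j}=\max_i\abs{x_i}\}$, put $\mathrm W_j=\iota^{-1}(\mathrm U_j)$ (so the $\mathrm W_j$ cover $\mathrm X$), and let $\tau_j=\iota^*X_j$, a nowhere-vanishing section, hence a local frame, of $L^e$ on $\mathrm W_j$. Endow $L^e$ with the smooth metric $\nu=\iota^*\norm{\cdot}_\W$; since $\norm{X_j}_\W=\abs{x_j}/\max_i\abs{x_i}\equiv1$ on $\mathrm U_j$, we get $\nu(\tau_j)\equiv1$ on $\mathrm W_j$. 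Write $\mu$ for the canonical metric on $L$ furnished by Lemma~\ref{lemm.tate}, and also for the metrics it induces on the powers $L^{\otimes m}$; since $\mathrm X$ is compact there is a bounded continuous function $g$ on $\mathrm X$ with $\mu^{\otimes e}=e^{-g}\nu$ on $L^e$.

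The key step — and, I expect, the most delicate — is to track $g$ through the dynamics. The isomorphism $\eps$ being an isometry for $\mu$ gives $(f^n)^*\mu=\mu^{\otimes d^n}$, whence $(f^n)^*(\mu^{\otimes e})=\mu^{\otimes ed^n}$ under the canonical identification $(f^n)^*L^e\simeq L^{ed^n}$. Consequently, for all $n$ and $j$, the section $s_{n,j}=(f^n)^*\tau_j$ is a local frame of $L^{ed^n}$ on $(f^n)^{-1}(\mathrm W_j)$ whose canonical norm equals $e^{-g\circ f^n}$ there, because $\nu(\tau_j)\equiv1$ and norms pull back by composition. The care required here is precisely that $\eps$ is an isometry for $\mu$ but not for the reference metric $\nu$, so the constancy $\nu(\tau_j)\equiv1$ must be combined with the correction term $g$ and with the composition with $f^n$.

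Finally I would bring in equicontinuity. Fix $x\in\mathrm E_f$. Applying the definition of equicontinuity to the family $\{f^n|_{\mathrm E_f}\}_{n\geq0}$ and the finite covering $(\mathrm W_j)$ of $\mathrm X$, we obtain a neighbourhood $\mathrm W$ of $x$ in $\mathrm E_f$, which we may further shrink so that $L$ is trivial on it, together with indices $j_n$ such that $f^n(\mathrm W)\subset\mathrm W_{j_n}$ for every $n$. Then each $s_n:=s_{n,j_n}$ is a frame of $L^{ed^n}$ on $\mathrm W$ with $\log\norm{s_n}^{-1}=g\circ f^n$. Choosing a frame $t$ of $L$ on $\mathrm W$ and writing $s_n=u_n\,t^{\otimes ed^n}$ with $u_n$ invertible holomorphic on $\mathrm W$, one gets $\log\norm{t}^{-1}=\tfrac1{ed^n}\bigl(g\circ f^n+\log\abs{u_n}\bigr)$, so
\[ \bigl|\log\norm{t}^{-1}-\tfrac1{ed^n}\log\abs{u_n}\bigr|\leq\norm{g}_\infty/(ed^n), \]
which tends to $0$ uniformly on $\mathrm W$; thus $\log\norm{t}^{-1}$ is a uniform limit on $\mathrm W$ of functions of the form $a\log\abs{u}$. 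Since strong pluriharmonicity is a local condition and, for any frame $t'=vt$ on an open $\mathrm V\subseteq\mathrm E_f$ (shrinking such a $\mathrm W$ inside $\mathrm V$ around a given point), $\log\norm{t'}^{-1}=\lim_n\tfrac1{ed^n}\log\abs{u_n v^{-ed^n}}$ is again such a uniform limit, the metric $\bar L$ is strongly pluriharmonic on $\mathrm E_f$. The ampleness of $L$ enters only in producing the finite chart cover with distinguished frames of $\nu$-norm $1$, which is why the hypothesis should be removable.
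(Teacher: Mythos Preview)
Your proposal is correct and takes essentially the same route as the paper's proof: both transcribe the Hubbard--Papadopol argument by using equicontinuity to trap the iterates $f^n(\mathrm W)$ inside a fixed member of a finite cover on which a distinguished Weil-type frame has controlled norm, and then observe that the bounded error term vanishes after division by $ed^n$. Your formulation is somewhat more intrinsic (frames $\tau_j$ and the comparison function $g$) while the paper works concretely with a polynomial lift $F\colon\mathrm A^{n+1}\to\mathrm A^{n+1}$ and its components $F_i^{(k)}$, but the content is the same.

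One small technical point deserves attention. The sets $\mathrm W_j=\{\abs{x_j}=\max_i\abs{x_i}\}$ you use are closed affinoid domains, not open subsets, whereas the definition of equicontinuity given in the paper is stated for finite \emph{open} coverings. The paper deals with this by using instead the open sets $\mathrm V_j=\{\abs{x_j}>\tfrac12\max_i\abs{x_i}\}$, on which $\nu(\tau_j)=\abs{x_j}/\max_i\abs{x_i}$ lies in $(\tfrac12,1]$ rather than being identically~$1$. Your argument survives this modification unchanged: the canonical norm of $s_n$ on $\mathrm W$ becomes $e^{-g\circ f^n}\cdot\nu(\tau_{j_n})\circ f^n$, and since $\log\nu(\tau_{j_n})$ is bounded by $\log 2$ on $\mathrm V_{j_n}$, the additional term is absorbed alongside $g\circ f^n$ and still disappears after division by $ed^n$.
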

\begin{proof}
The proof is inspired from the above-mentioned sources,
which in turns is an adaptation of
the complex case~\cite{hubbard-papadopol1994}
(see also~\cite{ueda1994}).

We may replace~$L$ by a positive power of itself and
assume that it is very ample, induced by
a closed embedding of~$\mathrm X$ in~$\mathrm P^n$,
and that the natural map $\Gamma(\mathrm\P^n,\mathscr O(d))\ra\Gamma(\mathrm X,\mathscr O(d))$ is surjective.
Then, there are homogeneous polynomials $(F_0,\dots,F_n)$,
of degree~$d$, with coefficients in~$K$,
and without common zeroes on~$\mathrm X$,
such that $f([x_0:\dots:x_n])=[F_0(x):\dots:F_n(x)]$
for any $x=[x_0:\dots:x_n]\in\mathrm P^n$.
One considers the
polynomial map $F\colon\mathrm A^{n+1}\ra\mathrm A^{n+1}$;
it lifts a rational map on~$\mathrm P^n$
which extends the morphism~$f$.

For $(x_0,\dots,x_n)\in\mathrm A^{n+1}$, define
$\norm x=\max(\abs{x_0},\dots,\abs{x_n})$.
The Weil metric on~$\mathscr O(1)$ is given by
\[ \log\norm{s_P(x)}^{-1} =  \log\abs{P(x)}^{-1}
 + \deg(P) \log \norm{x}, \]
where $P$ is an homogeneous polynomial, $s_P$ the
corresponding global section of $\mathscr O(\deg(P))$, and
$x$ is a point of~$\mathrm A^{n+1}$ such that $P(x)\neq 0$.
The restriction to~$\mathrm X$
of this metric is a semi-positive metric~$\norm\cdot_0$ on~$L$.
The construction of the canonical metric on~$L$
introduces a sequence of semi-positive metrics~$\norm\cdot_n$
on~$L$; these metrics are given by the following explicit formula
\[ \log\norm{s_P(x)}_k^{-1}  = \log\abs{P(x)}^{-1}
 + \deg(P) d^{-k} \log\norm{F^{(k)}(x)}, \]
where $F^{(k)}:\mathrm A^{n+1}\ra\mathrm A^{n+1}$ is the $k$th
iterate of~$F$.

The convergence of this sequence is therefore equivalent to the
convergence of the sequence $(d^{-k}\log\norm{F^{(k)}})_k$
towards a continuous fonction on the preimage of~$\mathrm X$
under the projection map~$\mathrm A^{n+1}\setminus\{0\}\ra\mathrm P^n$.
The limit is usually called the homogeneous Green function.

For $0\leq i\leq n$,
let $\mathrm V_i$ be the open set of points~$x=[x_0:\dots:x_n]\in\mathrm P^n$
such that $\abs{x_i} >\frac12 \norm{x}$.
They form an open covering of~$\mathrm P^n$; their intersections
with~$\mathrm X$ form an open covering of~$\mathrm X$.

Fix $x\in \mathrm E_f$ and let $\mathrm U$ be an open neighbourhood
of~$x$ such that for any positive integer~$k$, there exists
$i\in\{0,\dots,n\}$ such that $f^k(\mathrm U)\subset \mathrm V_i$.
For any~$i$, let $N_i$ be the set of integers~$k$
such that $f^k(\mathrm U)\subset\mathrm V_i$.
Let us consider any index~$i$ such that~$N_i$ is infinite; to
fix ideas, let us assume that $i=0$.
The canonical norm of a section~$s_P$ at a point $y\in\mathrm U$
is given by
\begin{align*}
 \log\norm{s_P(y)}^{-1}
& =\log\abs{P(y)}^{-1} +\deg(P) \lim_{\substack{k\ra\infty \\k\in N_0}}
 d^{-k}   \log\norm{F^{(k)}(y)}  \\
& = \log\abs{P(y)}^{-1} \\
& \quad +\deg(P) \lim_{\substack{k\ra\infty \\k\in N_0}}
d^{-k} \bigg(  \log\abs{F_0^{(k)}(y)} \\
& \hskip 3cm {} +   \log \max_{0\leq i\leq m} \abs{F_i^{(k)}(y)/F_0^{(k)}(y)}\bigg).\end{align*}
Observe that $[F_0^{(k)}(y):\dots:F^{(k)}_m(y)]$ are the
homogeneous coordinates of the point~$f^k(y)$. Since $y\in\mathrm U$
and $f^k(\mathrm U)\subset\mathrm V_0$, one has
$\abs{F^{(k)}_i(y)}\leq 2\abs{F^{(k)}_0(y)}$, so that
the last term is bounded by $d^{-k}\log 2$ and uniformly
converges to~$0$ on~$\mathrm U$.
Finally, uniformly on~$\mathrm U$,
\[  \log\norm{s_P(y)}^{-1}
=  \log\abs{P(y)}^{-1} +\deg(P) \lim_{\substack{k\ra\infty \\k\in N_0}}
d^{-k}  \log\abs{F_0^{(k)}(y)}. \]
This shows that $\log\norm{s_P}^{-1}$ is strongly harmonic on~$\mathrm U$,
as claimed.
\end{proof}

\begin{CLcoro}
The canonical measure $c_1(\bar L)^n$ vanishes on~$\mathrm E_f$.
\end{CLcoro}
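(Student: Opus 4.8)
The plan is to obtain the corollary as a formal consequence of the preceding Proposition together with the local character of the measures recorded in Proposition~\ref{prop.local}. First I would observe that, writing $n=\dim\mathrm X$, the canonical measure is nothing but $c_1(\bar L)\cdots c_1(\bar L)\,\delta_{\mathrm X}$, i.e.\ the special case $k=n$, $\mathrm Z=\mathrm X$, $\bar L_1=\dots=\bar L_n=\bar L$ of the general construction. Since $L$ is ample, the previous Proposition shows that $\bar L$ is strongly pluriharmonic on the equicontinuity locus $\mathrm E_f$. I would then apply Proposition~\ref{prop.local} with $\mathrm U=\mathrm E_f$ and $\bar L_1=\bar L$: it gives that the support of $c_1(\bar L)^n$ is disjoint from $\mathrm E_f$, which is exactly the assertion that $c_1(\bar L)^n$ vanishes on~$\mathrm E_f$.

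The only point requiring a word of justification is the passage from the statement of the previous Proposition, proved there for the explicit sections $s_P$ attached to homogeneous polynomials, to the condition in the Definition of ``strongly pluriharmonic on~$\mathrm U$'', namely that $\log\norm s^{-1}$ be strongly pluriharmonic on~$\mathrm V$ for \emph{every} local frame~$s$ of~$L$ on an open subset $\mathrm V\subset\mathrm E_f$. This is immediate: two local frames differ by an invertible holomorphic function~$g$, hence $\log\norm s^{-1}$ and $\log\norm{s_P}^{-1}$ differ by $\log\abs g$, and adding $\log\abs g$ to a local uniform limit of functions of the form $a\log\abs u$ yields again such a limit, so the property is independent of the chosen frame.

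I do not expect a genuine obstacle here: all the substance lies in the two preceding results, and the remaining work is bookkeeping. What I would check with some care is that $\mathrm E_f$ is indeed covered by the neighbourhoods produced in the proof of the previous Proposition — it is, because there $x$ ranges over all of $\mathrm E_f$ — and that ``vanishing on an open set'' and ``support disjoint from that open set'' coincide for the positive (Radon) measure $c_1(\bar L)^n$, which is clear.
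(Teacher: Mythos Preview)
Your proposal is correct and follows exactly the paper's approach: the paper's proof is the single sentence ``It suffices to apply Prop.~\ref{prop.local}.'' Your extra paragraph about passing from the sections~$s_P$ to arbitrary local frames is not strictly needed, since the previous Proposition already asserts strong pluriharmonicity in the sense of the Definition (for every local frame), but the check is harmless and correct.
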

\begin{proof}
It suffices to apply Prop.~\ref{prop.local}.
\end{proof}

\myparagraph{Remarks}
1) The particular case~$\mathrm X=\mathrm P^n$
generalizes Theorem~6 in~\cite{kawaguchi-silverman2009}
according to which canonical metrics are locally constant
on the classical Fatou set (meaning that the norm of a non-vanishing
local section  is locally constant).
Indeed, the restriction to the set of smooth rigid points
of a strongly harmonic function is locally constant.
This follows from the fact that any such point has
an affinoid neighbourhood~$\mathrm U$ which is a polydisk,
so that the absolute value of any invertible function
on~$\mathrm U$, hence any harmonic  function on~$\mathrm U$
is constant.

2) In the case $\mathrm X=\mathrm P^1$, Fatou and Julia sets in
the Berkovich framework have been
studied by Rivera-Letelier~\cite{rivera-letelier2003}
and Benedetto~\cite{benedetto1998}; see also~\cite{baker-rumely2010}
for a detailed exposition of the theory and further references.
An example of Rivera-Letelier
on the projective line (Example~10.70 of~\cite{baker-rumely2010})
shows that the equicontinuity locus~$\mathrm E_f$ may be smaller
than the complement of the support of the measure $c_1(\bar L)$.

Anyway, this proposition suggests the interest
of a general study of Fatou sets and of pluripotential theory
on Berkovich spaces. For example, is there an interesting
theory of pseudoconvexity for Berkovich spaces? Is it related
to Stein spaces? By analogy to the complex case (see~\cite{ueda1994}),
are Berkovich Fatou components pseudoconvex? Stein?

\subsection{Abelian varieties}
Let us assume throughout this section that $\mathrm X$ is an Abelian variety.
For any integer~$m$, let $[m]$ be the multiplication-by-$m$
endomorphism of~$\mathrm X$.

\myparagraph{Canonical metrics}
Let $L$ be a line bundle on~$\mathrm X$. Let~$0$ be
the neutral element of~$\mathrm X$ and let us fix
a trivialization~$L_0$ of~$L$ at~$0$.

The line bundle~$L^{\otimes 2}$ is canonically decomposed
as the tensor product of an even and an odd line bundle:
\[ L^{\otimes 2} = (L\otimes [-1]^*L) \otimes (L\otimes [-1]^*L^{-1}). \]
By the theorem of the cube, an even line bundle~$L$ satisfies
$[m]^*L\simeq L^{\otimes m^2}$, while for an odd line bundle~$L$,
one has $[m]^*L \simeq L^{\otimes m}$; moreover, there
are in each case a unique isomorphism compatible
with the trivialization at the origin.
By Lemma~\ref{lemm.tate}, an even (resp. an odd) line bundle
possesses a canonical continuous metric making this
isomorphism an isometry.  This furnishes a \emph{canonical metric}
on~$L^{\otimes 2}$, hence on~$L$.
According to this lemma, this metric is semi-positive
if $L$ is ample and even.
Using a Lemma of Künnemann,
(\cite{chambert-loir2000b}, Lemme~2.3),
one proves that this also holds if $L$ is algebraically  equivalent to~$0$.
In any case, the canonical metrics are admissible.

\myparagraph{The case of good reduction}
When the variety~$\mathrm X$ has good reduction, the canonical
metrics and the associated measures are fairly easy to describe.
Indeed, let~$\mathfrak X$ be the Néron model  of~$\mathrm X$
over~$K^\circ$, an Abelian  scheme. For any line bundle~$L$ on~$\mathrm X$
there is a unique line bundle~$\mathfrak L$ on~$\mathfrak X$
which extends~$L$ and which admits a trivialization at the~$0$
section extending the given one over~$K$.
By the theorem of the cube for
the Abelian scheme~$\mathfrak X$, the isomorphism
$[m]^*L\simeq L^{\otimes m^a}$ (with $a=1$ or~$2$,
according to whether $L$ is odd or even)
extends uniquely to an isomorphism
$[m]^*\mathfrak L \simeq \mathfrak L^{\otimes m^a}$.
This implies that the canonical metrics are smooth,
induced by these models.

The description of the  canonical measures on~$\mathfrak X$ follows
at once.
Let $\xi$
be the  point of~$\mathrm X$ whose reduction
is the generic point of the special fiber of~$\mathfrak X$.
Then, for any family $(L_1,\dots,L_n)$ of line bundles on~$\mathrm X$,
one has
\[ c_1(\bar L_1)\dots c_1(\bar L_n) = \deg(c_1(L_1)\dots c_1(L_n)) \delta_\xi.
\]
We see in particular that they only depend on the classes
of the line bundles~$L_j$ modulo numerical equivalence.

\myparagraph{Gubler's description}
W.~Gubler~\cite{gubler2010} has computed the canonical measures
when the Abelian variety has bad reduction.
We describe his result here.

Up to replacing~$K$ by a finite extension, we assume
that $\mathrm X$ has split semi-stable reduction.
Raynaud's uniformization involves
an analytic group~$\mathrm E$
which is an extension of an abelian variety with
good reduction~$\mathrm Y$ by a split torus~$\mathrm T\simeq\gm^t$,
where $t\in\{1,\dots,n\}$ ---
the so-called Raynaud extension of~$\mathrm X$.
One has $t\geq 1$ since we assume \emph{bad} reduction;
moreover, $\dim \mathrm Y=\dim\mathrm E-t=n-t$.
There is a morphism $p\colon\mathrm E\ra\mathrm X$,
whose kernel is a discrete subgroup~$M$ of~$\mathrm E(K)$,
so that the induced map
$\mathrm E/\Lambda \ra \mathrm X$ is an isomorphism.
When $t=n$, one says that $\mathrm X$  has totally degenerate
reduction, and the morphism~$p$ is the rigid analytic uniformization
of the abelian variety~$\mathrm X$.

Moreover, $\mathrm E$ is constructed as
a contracted product $(\mathrm E_1\times \mathrm T)/\mathrm T_1$
from an extension~$\mathrm E_1$
of~$\mathrm Y$ by the ``unit subtorus'' $\mathrm T_1$
of~$\mathrm T$ (defined by the equalities $\abs{T_j(x)}=1$
for $j\in\{1,\dots,t\}$ and $x\in\mathrm T$).
The natural map $\lambda_{\mathrm T}\colon \mathrm T\ra\R^t$
defined by
\[ x\mapsto (-\log\abs{T_1(x)}, \dots, -\log\abs{T_t(x)}) \]
is continuous and surjective; it admits
a canonical section~$\iota_{\mathrm T}$
which maps a point $(u_1,\dots,u_t)\in\R^t$ to the semi-norm
\[ f\mapsto \sup_{\mathbf m\in\Z^t} a_{\mathbf m} e^{-m_1u_1-\dots-m_tu_t}
, \qquad \text{for $f=\sum_{\mathbf m} a_{\mathbf m} T_1^{m_1}\dots T_t^{m_t}\in\mathscr O(\mathrm T).$} \]

The map~$\lambda_{\mathrm T}$ extends uniquely to
a morphism~$\lambda\colon \mathrm E\ra \R^t$
whose kernel contains~$\mathrm E_1$.
The image~$\Lambda=\lambda(M)$ is a lattice of~$\R^t$,
and the morphism~$p$ induces a continuous proper morphism
$\rho\colon \mathrm X\ra \R^t/\Lambda$.
Composing the section~$\iota_{\mathrm T}$ with the projection~$p$
furnishes a section~$\iota\colon\R^t/\Lambda\ra\mathrm X$ of~$\rho$.
Its image is the \emph{skeleton} of~$\mathrm X$.
Gubler's theorem (\cite{gubler2010}, Cor.~7.3) is the following:
\begin{CLtheo}
Let $L_1,\dots,L_n$ be line bundles on~$\mathrm X$.
The canonical measure $c_1(\bar L_1)\dots c_1(\bar L_n)$
is the direct image by~$\iota$ of the unique Haar measure
on~$\R^t/\Lambda$ whose total mass is $\deg(L_1\dots L_n)$.
\end{CLtheo}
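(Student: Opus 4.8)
The plan is to reduce the statement to the case of a single symmetric ample line bundle, to pull everything back along the Raynaud uniformization, to show that the resulting measure is concentrated on the skeleton, and finally to identify it there with a real Monge--Amp\`ere measure, which for the canonical metric is translation-invariant. First I would reduce to the diagonal case. Both sides of the asserted identity are symmetric and multilinear in $(L_1,\dots,L_n)$ --- the left side because $c_1$ is additive and the canonical metric of Lemma~\ref{lemm.tate} is multiplicative in~$L$, the right side because the intersection degree is multilinear --- and both factor through numerical equivalence. For the right side this is clear; for the left side it follows from the vanishing of the curvature of the canonical metric of an algebraically trivial line bundle: a torsion bundle carries, up to isometry, the trivial metric, while for $L\in\Pic^0(\mathrm X)$ the contraction argument proving Lemma~\ref{lemm.tate}, applied to $[m]^*$, shows that all mixed products $c_1(\bar L)c_1(\bar L_2)\dots c_1(\bar L_n)$ vanish (see \cite{gubler2010,chambert-loir2006}). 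As the N\'eron--Severi group of~$\mathrm X$ is rationally spanned by classes of symmetric ample line bundles, a polarization identity reduces the theorem to the equality $c_1(\bar L)^n=\iota_*\eta_L$ for $L$ symmetric and ample, where $\eta_L$ is the Haar measure on $\R^t/\Lambda$ of total mass $\deg(L^n)$.

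Next I would bring in the uniformization. Pulling $\bar L$ back along the local isomorphism $p\colon\mathrm E\ra\mathrm X$ and using the Raynaud extension $1\to\mathrm T\to\mathrm E\xrightarrow{q}\mathrm Y\to1$, where $\mathrm Y$ has good reduction and $\mathrm T\simeq\gm^t$, one obtains after a rigidification an isometry
\[ p^*\bar L\simeq \bar L_{\mathrm T}\otimes q^*\bar H, \]
in which $\bar H$ is the canonical (hence, since $\mathrm Y$ has good reduction, smooth) metric on a line bundle~$H$ on~$\mathrm Y$, and $\bar L_{\mathrm T}$ is a metric on the torus part whose weight function is, on the skeleton, a quadratic form~$Q$ in the tropical coordinates $u=\lambda(\cdot)\in\R^t$; the isometry $[m]^*\bar L\simeq\bar L^{\otimes m^2}$ forces $Q(mu)=m^2Q(u)$, and invariance under~$M$ determines~$Q$ up to the translation action of~$\Lambda$. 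This metrized version of Mumford's description of degenerating abelian varieties --- equivalently, the description through a Mumford formal model attached to a $\Lambda$-rational polyhedral decomposition of~$\R^t$ --- is the technical core, and I would take it from Gubler~\cite{gubler2010}.

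With this in hand the measure localizes to the skeleton. The affine part of the weight function of~$\bar L_{\mathrm T}$ is the logarithm of the absolute value of an invertible analytic function, so $\bar L_{\mathrm T}$ is strongly pluriharmonic away from the skeleton of~$\mathrm T$, while $c_1(q^*\bar H)=q^*c_1(\bar H)$ is supported on $q^{-1}(\xi_{\mathrm Y})$, which contains the skeleton of~$\mathrm E$. By Proposition~\ref{prop.local}, the support of $c_1(p^*\bar L)^n$ is contained in that skeleton, hence $c_1(\bar L)^n$ is supported on $\iota(\R^t/\Lambda)$. To compute it there, expand $c_1(p^*\bar L)^n=(c_1(\bar L_{\mathrm T})+q^*c_1(\bar H))^n$: since $\dim\mathrm Y=n-t$ one has $(q^*c_1(\bar H))^{n-t+1}=0$, and since the skeleton of~$\mathrm T$ is $t$-dimensional one has $c_1(\bar L_{\mathrm T})^{t+1}=0$, so only the term $\binom nt\,c_1(\bar L_{\mathrm T})^t\,(q^*c_1(\bar H))^{n-t}$ survives. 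Here $(q^*c_1(\bar H))^{n-t}=\deg(H^{n-t})$ times the pull-back of the Dirac mass at~$\xi_{\mathrm Y}$ (the good-reduction formula on~$\mathrm Y$), while $c_1(\bar L_{\mathrm T})^t$ equals, on the skeleton, a constant multiple of Lebesgue measure on~$\R^t$ --- namely, up to normalization, the real Monge--Amp\`ere measure $\det(\operatorname{Hess}Q)\,du$ of~$Q$. Pushing forward by the skeleton parametrization and then by~$p$, one obtains a Haar measure~$\eta$ on $\R^t/\Lambda$, so $c_1(\bar L)^n=\iota_*\eta$.

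Finally, the total mass of $c_1(\bar L)^n$ is $\deg(L^n)$ by the general mass formula recalled just before the statement, so $\eta=\eta_L$, and undoing the polarization gives the theorem. The step I expect to be the real obstacle is the uniformization step together with its corollary, the concentration on the skeleton: producing the precise ``quadratic on the skeleton'' form of the canonical metric through the Raynaud extension, and controlling $c_1(\bar L)^n$ off the skeleton, is exactly where Mumford's degeneration theory and Gubler's non-archimedean potential-theoretic estimates enter, and none of it is formal.
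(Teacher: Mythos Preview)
The paper does not prove this theorem at all: it is stated as Gubler's result, with a reference to \cite{gubler2010}, Cor.~7.3, and the surrounding subsection only sets up the notation for the Raynaud extension and the skeleton before quoting the conclusion. There is thus no proof in the paper to compare your proposal against.

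Your sketch is nonetheless broadly in the spirit of Gubler's argument --- the reduction to a single symmetric ample~$L$ via multilinearity, the passage through the Raynaud uniformization, the splitting $p^*\bar L\simeq\bar L_{\mathrm T}\otimes q^*\bar H$, and the identification of the restriction to the skeleton with a real Monge--Amp\`ere measure all occur there. The step that does not go through as you have written it is the appeal to Proposition~\ref{prop.local} for the concentration on the skeleton. First, that proposition is stated for admissible metrics on a \emph{proper} $K$-scheme, while $\mathrm E$ and $\mathrm T$ are not proper, so the measure formalism of the paper does not apply to $p^*\bar L$ directly. Second, the canonical weight function of~$\bar L_{\mathrm T}$ is genuinely \emph{quadratic} in the tropical coordinates $u_j=-\log\abs{T_j}$, not affine; a quadratic form in the~$u_j$ is not of the shape $a\log\abs u$ for an invertible analytic~$u$, nor a uniform limit of such, on open sets of $\gm^t$ where several of the~$u_j$ still vary --- so ``strongly pluriharmonic off the skeleton'' fails in general for $t\geq 2$. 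Gubler circumvents both issues by working on the proper side throughout: he approximates~$\bar L$ by the smooth semi-positive metrics attached to Mumford formal models of~$\mathrm X$ associated with $\Lambda$-rational polytopal decompositions of~$\R^t$; for each such model the weight function is piecewise affine, the components of the special fiber correspond to the vertices, and the measure~\eqref{eq.def-smoothmeasure} is an explicit discrete tropical Monge--Amp\`ere sum. Refining the decomposition and passing to the limit gives the Monge--Amp\`ere of the limiting quadratic form, which is a constant multiple of Lebesgue measure on~$\R^t/\Lambda$, and the total-mass formula identifies the constant.
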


\section{Applications to Arakelov geometry}

We now describe some applications of the previous
considerations to arithmetic geometry over global fields.

\subsection{Adelic metrics and heights}

\myparagraph{Adelic metrics}
Let $F$ be either a number field (arithmetic case), or a finite extension of
the field of rational functions over a constant field (geometric case).
Let $M(F)$ be the set of normalized absolute values on~$F$.
Let $X$ be a projective variety over~$F$.
Any $v\in M(F)$ gives rise to a complete valued field~$F_v$,
and to an analytic space~$X_v$ over~$F_v$:
if $v$ is archimedean, $X_v=X(\overline{F_v})$,
while $X_v$ is the Berkovich analytic space attached to~$X_{F_v}$
if $v$ is ultrametric.

If $L$ is a line bundle on~$X$, an \emph{adelic metric}
on~$L$ is a family $(\norm\cdot_v)_{v\in M(F)}$
of continuous metrics on the induced line bundles over
the analytic spaces~$X_v$. We require the following supplementary compatibility
assumption: there exists a model~$(\mathfrak X,\mathscr L,e)$
over the ring of integers of~$F$ inducing the given
metrics at almost all places~$v$.
An adelic metric is said to be semi-positive, \resp
admissible if it is so at all places of~$F$.

Line bundles on~$X$ endowed with an adelic metric form a group
$\bPic(X)$; admissible line bundles form a subgroup $\bPicad(X)$.
If $f\colon Y\ra X$ is any morphism, there is a natural morphism
of groups $f^*\colon\bPic(X)\ra\bPic(Y)$; it maps
$\bPicad(X)$ into~$\bPicad(Y)$.

\myparagraph{Heights}
Consider line bundles $\bar L_0,\dots,\bar L_n$
with admissible adelic metrics. Let $Z$ be a subvariety
of~$X$ of dimension~$k$ and $s_0,\dots,s_k$
invertible meromorphic sections of~$L_0,\dots,L_k$
whose divisors have no common intersection point on~$Z$.
 For any $v\in M(F)$, we have recalled in Sections~\ref{sec.local.height.arch},
\ref{sec.local.height.arch.ad} and \ref{sec.local.height.berk.ad}
the definitions of the local height pairing
\[  (\hdiv(s_0)\dots \hdiv(s_k)|Z)_v \]
where the index~$v$ indicates the corresponding place of~$F$.
The global height is the sum, over all $v\in M(F)$, of these
local heights:
\[    (\hdiv(s_0)\dots \hdiv(s_k)|Z) =  \sum_{v\in M(F)}
  (\hdiv(s_0)\dots \hdiv(s_k)|Z)_v . \]
It inherits from the local heights their multilinear symmetric character.

Let us replace~$s_k$ by another invertible meromorphic section~$fs_k$.
Then,
\begin{align*} (\hdiv(s_0)\dots\hdiv(fs_k)|Z)
& = \sum_{v\in M(F)} (\hdiv(s_0)\dots\hdiv(fs_k)|Z)_v\\
& = \sum_{v\in M(F)} (\hdiv(s_0)\dots\hdiv(s_k)|Z)_v \\
& \quad
+ \sum_{v\in M(F)} \int_{X_v} \log\abs f^{-1} c_1(\bar L_0)\dots c_1(\bar L_{k-1}) \delta_{\mathrm Z_v}. \end{align*}
In particular, if $Z$ is a point $z\in X(F)$, then
$\delta_{\mathrm Z_v}=\delta_z$ is the Dirac mass at~$z$ and
\[ (\hdiv (s_0)|Z) = \sum_{v\in M(F)} \log\norm{s_0}^{-1}_v(z). \]
Let us observe that it is independent
on the choice of the chosen meromorphic section~$s_0$,
provided it is regular at~$z$.
Any other section has the form $fs_0$, for some invertible
meromorphic function~$f$ on~$X$. Then,
\begin{align*}
 (\hdiv (fs_0)|Z) & = \sum_{v\in M(F)} \log\norm{fs_0}^{-1}_v(z)  \\
& = \sum_{v\in M(F)} \log\norm{s_0}^{-1}_v(z)
+ \sum_{v\in M(F)} \log\abs f_v^{-1}(z) \\
& = (\hdiv (fs_0)|Z) \end{align*}
since, by the product formula, the second term vanishes.

By induction on the dimension of~$Z$, and using the commutativity
of the local height pairings, it follows that
the global height only depends on the metrized line bundles,
and not on the actual chosen sections~$s_0,\dots,s_k$.
We denote it by
\[ (\hc_1(\bar L_0)\dots \hc_1(\bar L_k)|Z). \]
Again, it is multilinear symmetric in the metrized line bundles~$\bar L_0,\dots, \bar L_k$. By the same argument, it only depends on their
isomorphism classes in $\bPicad(X)$.

It satisfies a projection formula: for any morphism $f\colon Y\ra X$
and any $k$-dimensional subvariety~$Z$ of~$Y$,
\[  (\hc_1(f^*\bar L_0)\dots \hc_1(f^*\bar L_k)|Z)
= (\hc_1(\bar L_0)\dots\hc_1(\bar L_k)| f_*(Z)), \]
where the cycle $f_*(Z)$ is defined as $\deg(Z/f(Z)) f(Z)$
if $Z$ and $f(Z)$ have the same dimension,
so that $f\colon Z\ra f(Z)$  is generically finite,
of some degree $\deg(Z/f(Z))$. If $Z$ and $f(Z)$
don't have the same dimension, one sets $f_*(Z)=0$.

\myparagraph{Heights of points}
The height of an algebraic point is an important tool
in Diophantine geometry.
If $\bar L$ is a line bundle
with an adelic metric on~$X$, then for any  point $P\in X(F)$,
viewed as a closed subscheme of~$X$, one has
\[ h_{\bar L} (P) = (\hc_1(\bar L)|P) = \sum_v \log\norm{s}^{-1}_v(P), \]
where $s$ is any meromorphic section on~$L$ which has neither
a zero nor a pole at~$P$. More generally, let $P\in X(\bar F)$
be an algebraic point and let $[P]$ be the corresponding closed point of~$X$.
Then,
\[ h_{\bar L}(P) = \frac1{[F(P):F]} (\hc_1(\bar L)|[P]) \]
is the height of~$P$ with respect to the metrized line bundle~$\bar L$.
In fact, restricted to points,
these definitions apply to any, not necessary admissible,
continuous metric on~$L$. The resulting function is a representative
of the classical height function relative to~$L$
(which is only defined up to the addition of a bounded function).

Observe also the following functorial property of the height: If $f\colon Y\ra X$ is a morphism and $P\in Y(\bar F)$, then $h_{f^*\bar L}(P)=h_{\bar L}(f(P))$.
Finally, recall that if $F$ is a global field,
then the height with respect to a metrized ample line bundle~$\bar L$
satisfies Northcott's finiteness property: for any integers~$d$
and~$B$, there are only finitely many points $P\in X(\bar F)$
such that $[F(P):F]\leq d$ and $h_{\bar L}(P)\leq B$.

\myparagraph{Zhang's inequality}
The essential minimum of the height~$h_{\bar L}$ is defined as
\[ e(\bar L)= \sup_{\substack{\emptyset\neq U\subset X}} \inf_{P\in U(\bar F)} h_{\bar L}(P), \]
where the supremum runs over non-empty open subsets of~$X$.
If $L$ is big, then $e(\bar L)$ is a real number. Another
way to state its definition is the following:
for any real number~$B$, then  the set
\[ \{ P\in X(\bar F)\,;\, h_{\bar L}(P)\leq B\} \]
is Zariski dense if $B>e(\bar L)$, and is not Zariski dense
if $B<e(\bar L)$.

Assume that $\bar L$  is an ample line bundle on~$X$,
equipped with a semi-positive adelic metric.
The (geometric/arithmetic) Hilbert-Samuel theorem implies the following
inequality
\[ e(\bar L) \geq \frac{(\hc_1(\bar L)^{n+1}|X)}{(n+1) (c_1(L)^n|X)}. \]
(See Zhang~\cite{zhang95b}, as well as~\cite{gubler2008,faber2009}
for more details in the geometric case). When $X$
is a curve and $F$ is a number field, Autissier~\cite{autissier2001b} proved
that the inequality holds for any ample line bundle
with an admissible adelic metric (see~\cite{chambert-loir2006});
this extends to the geometric case.

\subsection{Mahler measures and heights of divisors}

In this section, we assume that $X$ is a projective
geometrically integral smooth curve of positive genus~$g$ over~$F$.
For any place~$v\in M(F)$, let $X_v$ be the corresponding
analytic curve.

\myparagraph{}
Let $f$ be an invertible meromorphic function on~$X$. Let us view it
as an invertible meromorphic section of the trivial metrized line
bundle~$\overline{\mathscr O_X}$. Let $\bar L$ be any
line bundle on~$X$ with an admissible adelic metric.  Then,
\[  (\hc_1(\bar L)\hc_1(\overline{\mathscr O_X})|X) = 0. \]
Moreover, according to Theorem~1.3
of~\cite{chambert-loir-thuillier2009} (see Section~\ref{sec.mahler}),
\[  (\hc_1(\bar L)\hc_1(\overline{\mathscr O_X})|X)
=
 (\hc_1(\bar L)|\div(f)) + \sum_{v\in M(F)} \int_{X_v} \log\abs{f}_v^{-1}
  c_1(\bar L)_v. \]
In other words, this furnishes an integral formula
for the height (relative to~$\bar L$) of any divisor which
is rationally equivalent to~$0$:
\[ (\hc_1(\bar L)|\div(f)) = \sum_{v\in M(F)} \int_{X_v} \log\abs f_v c_1(\bar L)_v. \]

\myparagraph{Néron--Tate heights}
We want to apply this formula to a specific metrized line bundle on~$X$.
The Jacobian~$J$ of~$X$ is an Abelian variety of dimension~$g$.
We also choose a divisor~$D$ of degree~$1$ on~$X$ and
correspondingly fix an embedding~$\iota$ of~$X$ into~$J$.
(For this, we may need to enlarge the ground field~$F$.)
Finally, we let $\Theta$ be the theta divisor of~$J$,
defined as the image of~$X^{g-1}$ by the map
$(x_1,\dots,x_{g-1})\mapsto \sum_{j=1}^{g-1} \iota(x_j)$.

As described above, the line bundle~$\mathscr O_J(\Theta)$
admits a canonical metrization; this induces
a metrization on its inverse image~$L=\iota^* \mathscr O_J(\Theta)$ on~$X$.
The metrized line bundle~$\overline{\mathscr O_J(\Theta)}$ gives
rise to the (theta) Néron--Tate height on~$J$.
Consequently, decomposing $\div(f)=\sum n_P P$, we obtain
\begin{align*} (\hc_1(\bar L)|\div(f)) &= \sum n_P [F(P):F] \hat h_{\Theta}(\iota(P)) \\
& = \sum n_P [F(P):F] \hat h_{\Theta} ([P-D]),  \end{align*}
where $D$ is the fixed divisor of degree~$1$ on~$X$.

\myparagraph{Canonical measures}
Since $L$ has degree~$g$, the measure $c_1(\bar L)_v$
on~$X_v$ has total mass~$g$;
let us define a measure of total mass~$1$ on~$X_v$ by
\[  \mu_v=\frac1g c_1(\bar L)_v . \]

When $v$ is archimedean, the measure~$\mu_v$ is the Arakelov measure
on the Riemann surface $X_v(\C)$. Let us recall its definition.
Consider an orthonormal basis
$(\omega_1,\dots,\omega_g)$ of $H^0(X,\Omega^1_X)$, \ie,
a basis satisfying the relations
\[ \int_{X_v(\C)} \omega_j\wedge \overline{\omega_k} = \delta_{j,k}
= \begin{cases} 1 & \text{if $j=k$;} \\ 0 & \text{otherwise.}  \end{cases}\]
Then,
\[ \mu_v = \frac1g \sum_{j=1}^g \omega_j \wedge \overline{\omega_j}. \]

Let us now assume that $v$ is ultrametric.
By a theorem of Heinz~\cite{heinz2004},
the metric on the line bundle~$\bar L$  coincides
with the canonical metric defined by Zhang~\cite{zhang93}
using the reduction graph of the minimal regular model of~$X$;
see also~\cite{baker-faber2009} for a related interpretation
in the framework of tropical geometry.
This allows in particular to compute the measure~$\mu_v$:
the reader will find in~\cite[Lemma 3.7]{zhang93}
a quite explicit formula for~$\mu_v$,
involving the physical interpretation
of the graph as an electric network.
(Zhang's computation generalizes Theorem~2.11
of the prior paper~\cite{chinburg-r1993} by Chinburg and Rumely,
the normalization is slightly different; see also~\cite{baker-rumely2007}.)

\myparagraph{Superelliptic curves}
The formulas of this section combine to the following:
if $\div(f)=\sum n_P P$ is a divisor of an invertible meromorphic
function on~$X$,
\[ \sum n_P \hat h_{\Theta} ([ P-D]) = \sum_{v\in M(F)}
  \int_{X_v} \log\abs {f(x)}_v \, \mathrm d\mu_v(x). \]
As pointed out by R.~De~Jong~\cite{dejong2009},
the case of superelliptic curves is particularly interesting.
Indeed, such curves  are presented as a ramified $\mu_N$-covering
$x\colon X\ra\P^1$ of the projective line,
which is totally ramified over the point at infinity,
given by an equation $y^N=a(x)$, where $a$ is a polynomial
of degree~$m>N$, prime to~$N$. One has $g=\frac12 (N-1)(m-1)$.

Let us take
for the divisor~$D$
the single point~$O$ over the point at infinity.
For each point~$P$ in~$X(F)$, $x-x(P)$ is a rational function
on~$X$ which has a single pole of order~$N$ at infinity,
and which vanishes along the fiber~$x^{-1}(x(P))$ of~$x$.
The group of automorphisms of~$X$ acts transitively
on this fiber, and respects the metrics, so that
all of these points have the same Néron-Tate height.
This implies the following formula
\[  \hat h_{\Theta}(P-O)
     = \frac 1N \sum_{v\in M(F)} \int_{X_v} \log\abs{x-x(P)}_v\mu_v \]
of~\cite{dejong2009}. The elliptic Mahler measure,
defined
by~\cite{everest-fhlathuin1996,everest1999}
as a Shnirelman integral
is therefore
a natural integral when viewed on Berkovich spaces.

\subsection{An equidistribution theorem}

\myparagraph{Bogomolov's conjecture}
Let $X$ be a projective smooth curve  of genus~$g\geq 2$
and let $\bar L$ be an ample line bundle on~$X$
with a canonical metric inducing the Néron--Tate height.
When $F$ is a number field, Bogomolov conjectured
in~\cite{bogomolov80b}  that $e(\bar L)>0$; this
conjecture has been shown by Ullmo~\cite{ullmo98}.
Its generalization to a subvariety~$X$ of an Abelian variety~$A$,
$L$ being an ample line bundle on~$A$ with a canonical metric,
asserts  that $e(X,\bar L)>0$ when $X$ is not the translate
of an abelian subvariety by a torsion point; it has been shown
by Zhang~\cite{zhang98}.

Since $h_{\bar L}(P)=0$ for any algebraic point~$P\in A(\bar F)$
which is a torsion point, these theorems imply in turn
a theorem of Raynaud~\cite{raynaud83,raynaud83c}
(formerly, a conjecture of Manin and Mumford)
that the torsion points lying in a subvariety~$X$ of an abelian variety
are not Zariski dense in~$X$, unless $X$ is itself the translate
of an abelian subvariety by a torsion point.

The analogues of Bogomolov's and Zhang's conjecture in the geometric case
is still open in general; see~\cite{gubler2007b,cinkir2009}
and the references therein
for partial results.

\myparagraph{}
The proofs by Ullmo and Zhang of Bogomolov's conjecture
make a fundamental use of an equidistribution principle
which had been discovered together with Szpiro~\cite{szpiro-u-z97}.
Let us first introduce a terminology: say a sequence
(or a net)
of algebraic points in a variety~$X$ over a number
field is \emph{generic} if any strict subvariety of~$X$
contains at most finitely  many terms of the sequence.

Let $\bar L$ be a line bundle on~$X$ with a semi-positive
adelic metric.
The idea of the equidistribution principle is to consider
a generic sequence~$(x_j)$ such that $h_{\bar L}(x_j)\ra e(\bar L)$,
\ie, realizing the equality in Zhang's inequality,
and to use this inequality further, as a variational principle.
Let $v$ be a place of~$F$; for any~$n$, let $\delta(x_j)_v$
be the probability measure on~$X_v$ which gives any conjugate
of~$x_j$ the same mass, $1/[F(x_j):F]$. The equidistribution
theorem states that for a generic sequence~$(x_j)$,
the sequence of measures $(\delta(x_j)_v)$ on~$X_v$
converges vaguely towards the measure $c_1(\bar L)^n_v/(c_1(L)^n|X)$.

In these papers, the equidistribution property was only investigated
at an archi\-medean place, but the introduction of the measures
on Berkovich spaces was motivated by potential equidistribution theorems
on those.  In~\cite{chambert-loir2006}, I was able
to prove general results on curves only. Indeed, unless $X$
is a curve, I needed an ampleness assumption on the metrized line
bundle~$\bar L$ in order to apply Zhang's inequality
to slight variations of it.
This requirement has been removed by a paper of Yuan~\cite{yuan2008}
who could understand \emph{arithmetic volumes} beyond the ample case.
Yuan's proof is an arithmetic analogue of an inequality of Siu~\cite{siu1993}
which Faber~\cite{faber2009} and Gubler~\cite{gubler2008}
used to prove the geometric case of the equidistribution theorem.

In \cite{chambert-loir-thuillier2009}, we considered more general
variations of the metrized line bundles.
The discussion in that
article was restricted to the arithmetic case but the arguments
extend to the geometric case.

\begin{CLtheo}\label{theo.equidis}
Let $X$ be a projective variety of dimension~$n$ over~$F$.
Let $\bar L$ be an ample line bundle on~$X$ with a semi-positive
adelic metric such that $e(\bar L)=(\hc_1(\bar L)^{n+1}|X)=0$.
Let $(x_j)$ be  a generic sequence of algebraic points in~$X$
such that $h_{\bar L}(x_j)\ra 0$.
Then, for any  line bundle~$\bar M$ on~$X$ with
an admissible adelic metric,
\[  \lim_{j\ra\infty} h_{\bar M}(x_j) = \frac{(\hc_1(\bar L)^n\hc_1(\bar M)|X)}{(c_1(L)^n|X)}. \]
\end{CLtheo}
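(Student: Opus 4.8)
The plan is to deduce this statement from Theorem~\ref{theo.equidis} of the introduction (the equidistribution theorem) by the standard variational trick of Szpiro--Ullmo--Zhang and Yuan, working place by place and then summing. First I would normalize: replacing $\bar M$ by $\bar M^{\otimes 2}$ and, using multilinearity of the global height pairing, it suffices to treat the case where $\bar M$ is a \emph{semi-positive} admissible metrized line bundle (write a general admissible $\bar M$ as $\bar M_1\otimes\bar M_2^\vee$ with $\bar M_i$ semi-positive and argue on each factor separately). One may further twist by a large multiple of $\bar L$ so that $M\otimes L^{\otimes N}$ is ample; since $h_{\bar L}(x_j)\to 0$ this changes neither side in a way that affects the limit, so I reduce to the case $\bar M$ semi-positive with $M$ ample.

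The heart of the argument is the perturbation. For a real parameter $\eps$, consider the adelic metrized line bundle $\bar L_\eps=\bar L\otimes\bar M^{\otimes\eps}$ (interpreting the $\eps$-th power of the metric in the obvious way; for $\eps\in\Q$ this is literal, and one passes to the limit, or one simply works with $\bar L^{\otimes q}\otimes\bar M^{\otimes p}$ and clears denominators). For $|\eps|$ small and $\bar M$ semi-positive this bundle is still semi-positive with $L_\eps$ ample, so Zhang's inequality from Section~3.1 applies to it:
\[
 e(\bar L_\eps)\geq \frac{(\hc_1(\bar L_\eps)^{n+1}|X)}{(n+1)(c_1(L_\eps)^n|X)}.
\]
Expanding the numerator by multilinearity of the global height pairing and using the hypothesis $(\hc_1(\bar L)^{n+1}|X)=0$ gives $(\hc_1(\bar L_\eps)^{n+1}|X)=(n+1)\eps\,(\hc_1(\bar L)^n\hc_1(\bar M)|X)+O(\eps^2)$, and similarly $(c_1(L_\eps)^n|X)=(c_1(L)^n|X)+O(\eps)$. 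On the other hand, for the generic sequence $(x_j)$ one has $h_{\bar L_\eps}(x_j)=h_{\bar L}(x_j)+\eps\,h_{\bar M}(x_j)\to \eps\,\liminf_j h_{\bar M}(x_j)$ (taking a subsequence so the $\liminf$ is a genuine limit), hence $e(\bar L_\eps)\leq \eps\,\liminf_j h_{\bar M}(x_j)$ when $\eps>0$ and the reverse when $\eps<0$. Combining, dividing by $\eps$ and letting $\eps\to 0^{+}$ and $\eps\to 0^{-}$ pins down
\[
 \lim_{j\to\infty} h_{\bar M}(x_j)=\frac{(\hc_1(\bar L)^n\hc_1(\bar M)|X)}{(c_1(L)^n|X)},
\]
which is the assertion. (The passage from $\liminf$/$\limsup$ to an honest limit is immediate once the two one-sided bounds coincide.)

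The main obstacle — and the place where I expect to spend real effort — is justifying that the perturbed bundle $\bar L_\eps$ genuinely falls in the range where Zhang's inequality is available, and controlling the error terms uniformly. Concretely: $\bar M$ being merely admissible, not semi-positive, is why the reduction to semi-positive $\bar M$ at the start is essential; and even then $\bar L\otimes\bar M^{\otimes\eps}$ need not have a semi-positive metric for $\eps<0$ unless one first makes $L_\eps$ ample by the twisting step, so the bookkeeping of which $\eps$ are admissible must be done carefully (this is exactly the point that forced the ampleness hypothesis on $\bar L$ in \cite{chambert-loir2006} and was liberated by Yuan's volume estimates in \cite{yuan2008}). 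The expansions of $(\hc_1(\bar L_\eps)^{n+1}|X)$ and of the essential minimum are where one invokes, respectively, the multilinearity and continuity of the global height pairing established in Section~3.1 and the arithmetic Hilbert--Samuel / volume input underlying Theorem~\ref{theo.equidis}; granting those, the remaining manipulations are the routine variational calculus sketched above. Finally one notes that the statement as phrased is a clean corollary: it is really the ``test against an arbitrary admissible $\bar M$'' reformulation of the measure-theoretic equidistribution $\delta(x_j)_v\rightharpoonup c_1(\bar L)^n_v/(c_1(L)^n|X)$, obtained by taking $\bar M$ to be $\mathscr O_X$ with a smooth metric $e^{-\phi_v}$ at one place and trivial elsewhere, so that $h_{\bar M}(x_j)=\frac1{[F(x_j):F]}\sum\phi_v(\text{conjugates})\to \int_{X_v}\phi_v\,\mu_v$ — but the line-bundle formulation above is the one that makes the variational proof transparent.
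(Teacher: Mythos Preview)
Your sketch is the standard Szpiro--Ullmo--Zhang/Yuan variational argument and is essentially correct; the reductions (writing an admissible $\bar M$ as a difference of semi-positive ones, twisting by a power of $\bar L$ to make the underlying bundle ample using $(\hc_1(\bar L)^{n+1}|X)=0$) and the expansion of $(\hc_1(\bar L_\eps)^{n+1}|X)$ are all valid. You also correctly isolate the one genuine difficulty: for $\eps<0$ the perturbed bundle $\bar L\otimes\bar M^{\eps}$ need not be semi-positive, so Zhang's inequality in its original form is unavailable, and it is precisely Yuan's arithmetic volume estimate that supplies the missing lower bound $e(\bar L_\eps)\geq (n+1)^{-1}(c_1(L)^n|X)^{-1}(\hc_1(\bar L_\eps)^{n+1}|X)+O(\eps^2)$ in that regime. (Your opening ``replace $\bar M$ by $\bar M^{\otimes 2}$'' seems to be a stray remark and plays no role.)

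As for comparison: the paper does not actually prove this theorem. It simply records that the statement is equivalent to Lemma~6.1 of~\cite{chambert-loir-thuillier2009}, the equivalence being obtained by rescaling the metric on~$\bar L$ at one place so as to normalize $e(\bar L)=(\hc_1(\bar L)^{n+1}|X)=0$. Your write-up is thus a self-contained sketch of the argument that sits behind that citation, rather than a different route; if anything, it is more informative than what the paper offers here.
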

The particular case stated above is equivalent to
\loccit, Lemma~6.1,  as one can see by
by multiplying the metric on~$\bar L$ by an adequate constant at some place
of~$F$.
Taking for $M$ the trivial  line bundle~$\mathscr O_X$,
with an admissible metric,
one recovers the equidistribution theorems of Yuan, Faber
and Gubler.

\subsection{Lower bounds for heights and the Hodge index theorem}

In the final section, we use the Hodge
index theorem in Arakelov geometry to establish
positive lower bounds for heights on curves.
The results are inspired by recent papers~\cite{baker-demarco2009,
petsche-szpiro-tucker2009}, and the proofs are borrowed from~\cite{mimar1997}.
After they were conceived, I received the preprint~\cite{yuan-zhang2009}
which proves a similar result in any dimension.

\myparagraph{The arithmetic Hodge index theorem}
Let $X$ be a projective smooth curve over~$F$,
let $\bar L$ be a line bundle of degree~$0$ on~$X$, with
an admissible metric. Let $\bar L_0$ be the same line bundle
with the canonical metric: if $X$ has genus~$\geq 1$,
this is the metric induced by an embedding of~$X$ into its Jacobian,
if $X$ is of genus~$0$, then $\bar L_0$ is the trivial metrized
line bundle. The metrized line bundle~$\bar L\otimes\bar L_0^{-1}$
is the trivial line bundle, together with an admissible
metric which is given by a function~$f_v$ at the place~$v$ of~$F$.

A formula of Faltings--Hriljac
expresses~$(\hc_1(\bar L_0)^2|X)$ as minus twice the Néron--Tate height
of the point of~$J$ corresponding to~$L$. More generally,
\[ (\hc_1(\bar L)^2|X) = - 2 \hat h_{\mathrm{NT}}([L])
+ \sum_{v\in M(F)} \mathscr D(f_v), \]
where  for each~$v\in M(F)$,
\[ \mathscr D(f_v) = \int_{X_v} f_v \ddc (f_v) \]
is the \emph{Dirichlet energy} of~$f_v$. This is
a non-positive quadratic form which vanishes
if and only if $f_v$ is constant.
For more details, I refer to~\cite{bost1999} at archimedean places
and~\cite{thuillier2005} at ultrametric places.
(When $X$ has genus~$0$, $L\simeq \mathscr O_X$ and
the term $\hat h_{\mathrm{NT}}([L])$ has to be interpreted as~$0$.)

As a consequence, $(\hc_1(\bar L)^2|X)\leq 0$.
Let us analyse the case of equality. Since they are
nonpositive, all terms in the formula above have to vanish.
Consequently,  $[L]$ is a torsion point in the Jacobian,
and all functions~$f_v$ are constant. We will say that
some power of~$\bar L$ is constant

\begin{CLprop}\label{prop.hit}
Let $F$ be a number field, let $X$ be a projective smooth curve over~$F$.
Let $\bar L$ and~$\bar M$ be two admissible
metrized line bundles over~$X$.
Assume that $\deg(L)=\ell$, $\deg(M)=m$ are positive.
and $(\hc_1(\bar L)^2|X)=(\hc_1(\bar M)^2|X)=0$.
Then, the essential minimum of $\bar L\otimes \bar M$
satisfies the following inequality:
\[ e(\bar L\otimes \bar M) \geq - \frac1{2(\ell+m)\ell m}(\hc_1(m\bar L-\ell\bar M)^2|X). \]

Moreover, the  right hand side of this inequality is always nonnegative
and vanishes if and only if some power of
$\bar L^m\otimes\bar M^{-\ell}$ is constant.
\end{CLprop}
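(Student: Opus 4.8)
The plan is to combine the Zhang--Autissier lower bound for the essential minimum on a curve with the bilinearity of the global height pairing and with the arithmetic Hodge index theorem recalled just above the statement.

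First I would apply the essential-minimum inequality to $\bar L\otimes\bar M$. Its underlying line bundle has degree $\ell+m>0$, hence is ample, and its metric is admissible since $\bPicad(X)$ is a group; so by Autissier's extension to the admissible case of Zhang's inequality (recalled in Section~3.1), $e(\bar L\otimes\bar M)\geq (\hc_1(\bar L\otimes\bar M)^2|X)/(2(\ell+m))$.

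Next I would expand the two self-intersection numbers $(\hc_1(\bar L\otimes\bar M)^2|X)$ and $(\hc_1(m\bar L-\ell\bar M)^2|X)$ by multilinearity and symmetry of the height pairing, using the hypotheses $(\hc_1(\bar L)^2|X)=(\hc_1(\bar M)^2|X)=0$ to kill the diagonal terms. Both reduce to a multiple of the cross term $(\hc_1(\bar L)\hc_1(\bar M)|X)$: the first equals $2(\hc_1(\bar L)\hc_1(\bar M)|X)$, and the second equals $-2\ell m\,(\hc_1(\bar L)\hc_1(\bar M)|X)$. Eliminating the cross term between these two identities and substituting into the bound of the previous step gives exactly the asserted inequality, the constant $1/\bigl(2(\ell+m)\ell m\bigr)$ appearing for purely arithmetic reasons.

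For the last assertion, note that $m\bar L-\ell\bar M=\bar L^m\otimes\bar M^{-\ell}$ has underlying line bundle of degree $m\ell-\ell m=0$. The arithmetic Hodge index theorem for degree-$0$ admissible metrized line bundles on a curve (recalled above via the Faltings--Hriljac formula and the nonpositivity of the Dirichlet energies $\mathscr D(f_v)$) gives $(\hc_1(m\bar L-\ell\bar M)^2|X)\leq 0$, so the right-hand side of the inequality is nonnegative; and its equality case says this quantity vanishes precisely when $[L^m\otimes M^{-\ell}]$ is torsion in the Jacobian and all the local functions are constant, \ie precisely when some power of $\bar L^m\otimes\bar M^{-\ell}$ is constant. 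The only point requiring care is the very first step: one needs the essential-minimum inequality for \emph{admissible}, not merely semi-positive, metrics, which on a curve over a number field is exactly Autissier's theorem; the remainder is a formal bilinear computation together with the Hodge index theorem already at our disposal, so I do not anticipate a genuine obstacle.
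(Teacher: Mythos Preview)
Your proof is correct and follows essentially the same route as the paper: apply Zhang's inequality (in Autissier's admissible form) to $\bar L\otimes\bar M$, expand both self-intersections bilinearly using the vanishing hypotheses to relate them through the cross term, and invoke the arithmetic Hodge index theorem for the degree-zero bundle $m\bar L-\ell\bar M$ to handle the sign and equality case. The paper's argument is identical in substance, differing only in that it presents the bilinear computation more tersely.
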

\begin{proof}
By Zhang's inequality (see~\cite{chambert-loir2006}),
one has
\[ e(\bar L+\bar M) \geq \frac1{2(\ell+m)} (\hc_1(\bar L+\bar M)^2|X). \]
Since $(\hc_1(\bar L)^2|X)=(\hc_1(\bar M)^2|X)=0$ by assumption,
we observe that
\[ (\hc_1(\bar L+\bar M)^2|X)=2(\hc_1(\bar L)\hc_1(\bar M)|X)
 = - \frac1{\ell m }(\hc_1(m\bar L-\ell\bar M)^2|X). \]
This shows the first claim.

Since $mL$ and~$\ell M$ have the same degree, viz.~$\ell m$,
the rest of the proposition
follows from the negativity properties of the height
recalled above.
\end{proof}

\myparagraph{}
Assume that $(x_n)$ is a generic sequence of points such that $h_{\bar L}(x_n)$
tends to~$0$.
By Theorem~\ref{theo.equidis},
$h_{\bar M}(x_n)$ converges to
$$ \frac1{\ell} (\hc_1(\bar L)\hc_1(\bar M)|X).$$
Except when both lower bounds are zero, this is strictly bigger
than the lower bound of the proposition, which is equal to
\[ \frac1{\ell+m} (\hc_1(\bar L)\hc_1(\bar M)|X). \]
In other words, the greedy obvious method to
find points of small height for~$\bar L+\bar M$
that first minimizes the height~$h_{\bar L}$,
only works up to the factor $(\ell+m)/\ell>1$.

\myparagraph{An example}
Let us give some explicit formulae
for the lower-bound above, in some particular cases.
We consider $X=\mathbf P^1$ over~$\Q$
and the metrized line bundle $\overline{\mathscr O(1)}_\W$.
Let $\phi$ and~$\psi$ be polynomials with integral coefficients,
 of degrees~$\ell$ and~$m$ respectively; let us put
$\bar L=\phi^*\overline{\mathscr O(1)}_\W$,
$\bar M=\psi^*\overline{\mathscr O(1)}_\W$. The line bundle
$\bar L^m\otimes\bar L^{-\ell}$ is trivial
and its metric is given by a family of functions~$(f_v)$.
Since $\phi$  and~$\psi$ have integral coefficients, $f_v=0$
at all finite places.
Moreover, since $g_{\bar L}(x)=\log\max(\abs{\phi(x)},1)$
and $g_{\bar M}(x)=\log\max(\abs{\psi(x)},1)$ are the Green
functions for the divisors~$\ell[\infty]$ and~$m[\infty]$ respectively,
one has
\[ f_\infty(x) = \log \frac{\max(\abs{\phi(x)}^m,1)}{\max(\abs{\psi(x)}^\ell,1)}. \]
Then,
\[ \ddc f_\infty = \frac m{2\pi}\mathrm d\Arg \phi(x) \wedge \delta_{\abs{\phi(x)}=1}
 - \frac \ell{2\pi}\mathrm d\Arg \psi(x) \wedge \delta_{\abs{\psi(x)}=1} \]
From this, we deduce that
\begin{align*}
 \mathscr D(f_\infty)
& =  \frac{\ell m}{2\pi}
\left(\int_{\abs{\psi(x)}=1} \log\max(\abs{\phi(x)},1)\mathrm d\Arg\psi(x)\right.\\
& \qquad {}\left.
   + \int_{\abs{\phi(x)}=1} \log\max(\abs{\psi(x)},1)\mathrm d\Arg\phi(x)\right),
\end{align*}
the two others terms vanishing.
In fact, Stokes's formula implies that the two terms within
the parentheses in  the previous
formula are equal and we have
\[ \mathscr D(f_\infty)=\frac{\ell m}{\pi} \int_{\abs{\phi(x)}=1}
 \log\max(\abs{\psi(x),1})\mathrm d\Arg\phi(x). \]

The simplest case to study is for $\phi(x)=x^\ell$. Then,
\[ \mathscr D(f_\infty)=\frac {\ell m}\pi \int_0^{2\pi} \log\max(\abs{\psi(e^{i\theta})},1)\,\mathrm d\theta  \]
is $2\ell m$ times the logarithm of the variant~$\mathrm M^+(\psi)$
of the Mahler measure of~$\psi$:
\[ \mathrm M^+(\psi)=  \exp\left( \frac1{2\pi} \int_0^{2\pi}
\log\max(\abs{\psi(e^{i\theta})},1)\,\mathrm d\theta \right). \]
In fact, Jensen's formula implies that
\[  \mathrm M^+(\psi)=  \exp\left( \frac1{(2\pi)^2} \int_0^{2\pi}
\log\abs{\psi(e^{i\theta_1})-e^{i\theta_2}}\, \mathrm d\theta_1\mathrm d\theta_2
\right) \]
is the Mahler measure $\mathrm M(\psi(x)-y)$
of the 2-variables polynomial $\psi(x)-y$.

Consequently, except for finitely many exceptions, any algebraic
point~$x\in \P^1(\bar\Q)$ satisfies
\[ \ell h(x) + h(\psi(x)) \geq \frac1{\ell +m} \log\mathrm M(\psi(x)-y). \]

For $\ell=1$ and $\psi(x)=1-x$, we obtain that up to finitely
many exceptions,
\[  h(x)+h(1-x)\geq \frac12 \log\mathrm M(1-x-y)\approx 0.161538,\]
In that particular case, Zagier~\cite{zagier1993} has proved a much
more precise result: except for 5 explicit points in~$\P^1$,
\[ h(x)+h(1-x)\geq \frac12 \log(\frac{1+\sqrt5}2)\approx 0.240606.\]
Observe also that if $(x_j)$ is a sequence of points such that $h(x_j)\ra 0$,
Theorem~\ref{theo.equidis} implies that
$h(1-x)\ra \log\mathrm M(1-x-y)\approx 0.323076$.

\myparagraph{Application to dynamical systems}
Let us assume that $\bar L$ and $\bar M$ are
the metrized line bundles~$\overline{\mathscr O(1)}_\phi$ and~$\overline{\mathscr O(1)}_\psi$
attached to rational functions~$\phi$ and~$\psi$ of degres~$d$
and~$e$ respectively, with $d\geq2$ and $e\geq 2$.
Let us write $h_\phi$ and $h_\psi$ for the height
relative to these metrized line bundles; we call them the
canonical heights.
The isometry $\phi^*\overline{\mathscr O(1)}_\phi\simeq
\overline{\mathscr O(1)}_\phi^d$ and the functorial properties
of the height imply that
for any $x\in \P^1(\bar F)$, $h_\phi(\phi(x))=d h_\phi(x)$
and $h_\psi(\psi(x))=e h_\psi(x)$. In particular, preperiodic
points for~$\phi$ (\ie, points with finite forward orbit)
satisfy $h_\phi(x)=0$. Moreover,
\begin{align*} d^2 (\hc_1(\overline{\mathscr O(1)}_\phi)^2|\P^1) &
=  (\hc_1(\phi^*\overline{\mathscr O(1)}_\phi)^2|\P^1) \\
& =  (\hc_1(\overline{\mathscr O(1)}_\phi)^2|\phi_*\P^1) \\
& =  d (\hc_1(\overline{\mathscr O(1)}_\phi)^2|\P^1),
 \end{align*}
hence $(\hc_1(\overline{\mathscr O(1)}_\phi)^2|\P^1)=0$
since $d\neq 0,1$.
Similarly, preperiodic points of~$\psi$ satisfy $h_\psi(x)=0$,
and
$(\hc_1(\overline{\mathscr O(1)}_\psi)^2|\P^1)=0$.

In the arithmetic case, or over function fields over a finite
field, Northcott's finiteness theorem implies easily
that points~$x$ such that $h_\phi(x)=0$ are preperiodic
for~$\phi$, and similarly for~$\psi$.
This is not true in general: for example, if $\phi$ is constant,
all constant points have height~$0$ but only countably many
of them are preperiodic; more generally isotrivial rational functions,
\ie rational functions which are constant after conjugacy by an automorphism
of~$\P^1$ will furnish counterexamples.
A theorem of Baker~\cite{baker2007} shows that the converse is true:
if $\phi$ is not isoconstant, then a point of height zero is then preperiodic;
the proof relies on a detailed analysis of a Green function
relative to the diagonal on~$\P^1\times\P^1$.
(In a more general context than the case of polarized dynamical
systems, Chatzidakis and Hrushovski~\cite{chatzidakis-hrushovski2008}
proved that the Zariski closure of the orbit
of a point of canonical height zero is isoconstant.)

Let us show how Prop.~\ref{prop.hit} implies some of the results of
Baker and DeMarco~\cite{baker-demarco2009}, and of
Petsche, Szpiro and Tucker~\cite{petsche-szpiro-tucker2009}.

\begin{CLprop}\label{prop.pst}
In the geometric case, let us assume that $\psi$ is
non-isotrivial;
if $F$ is a function field over an infinite field,
let us moreover assume that it is a polynomial.
The following are then equivalent:
\begin{enumerate}
\item the heights $h_\phi$ and $h_\psi$ coincide;
\item $\phi$  and $\psi$ have infinitely many common preperiodic points;
\item the essential lowest bound of~$h_\phi+h_\psi$ is zero;
\item the equilibrium measures~$\mu_\phi$ and~$\mu_\psi$ are equal
at all places;
\item the metrized line bundles $\mathscr O(1)_\phi$
and $\mathscr O(1)_\psi$ are isomorphic, up
to a family of constants $(c_v)$ such that $\prod c_v=1$.
\end{enumerate}
\end{CLprop}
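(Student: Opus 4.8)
The plan is to establish the five conditions are equivalent by proving the cycle $(1)\Rightarrow(2)\Rightarrow(3)\Rightarrow(4)\Rightarrow(5)\Rightarrow(1)$. Throughout I write $\bar L=\overline{\mathscr O(1)}_\phi$ and $\bar M=\overline{\mathscr O(1)}_\psi$; these are ample line bundles of degree~$1$ on~$\P^1$ with semi-positive adelic metrics, and the computation recalled above (using the isometry $\phi^*\overline{\mathscr O(1)}_\phi\simeq\overline{\mathscr O(1)}_\phi^d$ and the projection formula) gives $(\hc_1(\bar L)^2|\P^1)=(\hc_1(\bar M)^2|\P^1)=0$. Hence Proposition~\ref{prop.hit} in the arithmetic case, and its geometric analogue otherwise, applies to the pair $(\bar L,\bar M)$ with $\ell=m=1$; the right-hand side of that proposition is then $-\tfrac14\,(\hc_1(\bar L-\bar M)^2|\P^1)$.

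First I would dispatch the soft implications. For $(1)\Rightarrow(2)$: since $\phi$ has degree $d\geq 2$ it has infinitely many preperiodic points, all of height $h_\phi=0$, hence of height $h_\psi=0$ by hypothesis; under the standing hypotheses on~$\psi$ and on the constant field of~$F$ (Northcott's theorem in the arithmetic case, Baker's theorem~\cite{baker2007} together with the polynomial hypothesis when the constant field is infinite), a point with $h_\psi=0$ is preperiodic for~$\psi$, so these are infinitely many common preperiodic points. For $(2)\Rightarrow(3)$: a sequence of pairwise distinct common preperiodic points is generic, because any strict Zariski-closed subset of the curve~$\P^1$ is finite, and it satisfies $h_\phi(x_j)+h_\psi(x_j)=0$. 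Since canonical heights are non-negative, $h_{\bar L\otimes\bar M}=h_\phi+h_\psi\geq 0$ everywhere, so $e(\bar L\otimes\bar M)\geq 0$; and every non-empty open subset of~$\P^1$ contains all but finitely many $x_j$, so $e(\bar L\otimes\bar M)\leq 0$, whence $e(\bar L\otimes\bar M)=0$. Finally $(5)\Rightarrow(1)$ is immediate: if the two metrized line bundles differ only by constants $(c_v)$ with $\prod_v c_v=1$, then evaluating local heights of a section of $\mathscr O(1)$ non-vanishing at~$P$ gives $h_\phi(P)-h_\psi(P)=\sum_v\log c_v=0$ for every~$P$.

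The core is $(3)\Rightarrow(4)\Rightarrow(5)$. For $(3)\Rightarrow(4)$, Proposition~\ref{prop.hit} gives
\[ 0=e(\bar L\otimes\bar M)\geq -\tfrac14\,(\hc_1(\bar L-\bar M)^2|\P^1), \]
so $(\hc_1(\bar L-\bar M)^2|\P^1)\geq 0$; but $\bar L-\bar M$ is an admissible metrized line bundle of degree~$0$ on~$\P^1$, so the Hodge index theorem recalled before Proposition~\ref{prop.hit} gives $(\hc_1(\bar L-\bar M)^2|\P^1)\leq 0$, and hence this self-intersection is~$0$. Writing the metric of $\bar L-\bar M$ at~$v$ as $e^{-f_v}$ times the trivial one, the equality case forces the Dirichlet energy $\mathscr D(f_v)$ to vanish at every~$v$ (on $\P^1$ the torsion condition on the class is automatic), so each~$f_v$ is constant; then $\ddc f_v=0$, i.e.\ $c_1(\bar L)_v=c_1(\bar M)_v$, which is exactly $\mu_\phi=\mu_\psi$ at all places. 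For $(4)\Rightarrow(5)$: $\mu_\phi=\mu_\psi$ means $\ddc f_v=0$, so each~$f_v$ is harmonic on the compact analytic curve~$X_v$ — a compact Riemann surface at archimedean places, a compact Berkovich curve otherwise (maximum principle, \cite{thuillier2005}) — hence constant, $f_v=\log c_v$, with $c_v=1$ at almost all~$v$ by the adelic condition; then $h_\phi(P)-h_\psi(P)=\sum_v\log c_v$ is independent of~$P$, and since $\phi$ and~$\psi$ each have preperiodic (e.g.\ fixed) points, $\inf_P h_\phi(P)=\inf_P h_\psi(P)=0$, so taking infima forces $\sum_v\log c_v=0$, i.e.\ $\prod_v c_v=1$.

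The hard part will be $(3)\Rightarrow(4)$: it is here that the arithmetic Hodge index theorem (Proposition~\ref{prop.hit}) does the real work, in the form of sandwiching $(\hc_1(\bar L-\bar M)^2|\P^1)$ between $0$ (from the essential-minimum inequality) and $0$ (from the negativity of the self-intersection), and then extracting rigidity from the equality case; checking that Proposition~\ref{prop.hit} and its inputs (the Faltings--Hriljac formula, and the negativity and equality characterization of the Dirichlet energy) are all available in the geometric case is a component of this step. A secondary delicate point is $(1)\Rightarrow(2)$, where the hypotheses on~$\psi$ and on the constant field of~$F$ are precisely what is needed so that $h_\psi=0$ implies preperiodicity — the implication genuinely fails for isotrivial~$\psi$.
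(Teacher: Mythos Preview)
Your proof is correct and follows the same cycle $(1)\Rightarrow(2)\Rightarrow(3)\Rightarrow(4)\Rightarrow(5)\Rightarrow(1)$ as the paper, with the same key inputs: Proposition~\ref{prop.hit} (and its equality case via the Hodge index theorem) for $(3)\Rightarrow(4)$, and the maximum principle of~\cite{thuillier2005} for $(4)\Rightarrow(5)$. The only minor variation is in how you obtain $\prod_v c_v=1$: you compare the infima of $h_\phi$ and $h_\psi$, whereas the paper expands $(\hc_1(\overline{\mathscr O(1)}_\psi)^2|\P^1)$ in terms of $(\hc_1(\overline{\mathscr O(1)}_\phi)^2|\P^1)$ and the constants; both arguments are short and valid.
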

\begin{proof}
The arguments are more or less formal from Prop.~\ref{prop.hit};
let us detail them anyway for the sake of the reader.

1)$\Rightarrow$2). Like any rational map, $\phi$ has infinitely many preperiodic
points in~$\P^1(\bar F)$, and they satisfy $h_\phi(x)=0$.
If $h_\phi=h_\psi$, then they also satisfy $h_\psi(x)=0$.
Under the assumptions of the proposition, they are preperiodic for~$\psi$.

2)$\Rightarrow$3) is obvious, for common preperiodic points
of~$\phi$ and~$\psi$ satisfy $h_\phi(x)+h_\psi(x)=0$.

3)$\Rightarrow$4). By Prop.~\ref{prop.hit}, the line bundle
$\mathscr O(1)_\phi-\mathscr O(1)_\psi$ has the constant metric at all places.
In particular, the local measures $\mu_\phi$
and $\mu_\psi$ coincide at all places.

4)$\Rightarrow$5). Let $s$ be a non zero global section of~$\mathscr O(1)$.
For any place~$v$, $f_v=\log(\norm s_{v,\phi}/\norm s_{v,\psi})$;
one has $\mu_{v,\psi}-\mu_{v,\phi}=\ddc f_v$, hence
$\ddc f_v=0$.
By the maximum principle of~\cite{thuillier2005},
$f_v$ is constant.
Moreover,
\[ 0=(\hc_1(\overline{\mathscr O(1)}_\psi)^2|X)=(\hc_1(\overline{\mathscr O(1)}_\phi)^2|X)
 + \sum_v \log c_v = \sum_v\log c_v. \]

5)$\Rightarrow$1). This is obvious.
\end{proof}

\myparagraph{Remarks}
1) The restrictive hypotheses on~$\psi$ have only been used to establish
the implication 1)$\Rightarrow$2).

2) Of course, many other results can be established by the same reasoning,
in particular the number field case of Theorem~1.2 of~\cite{baker-demarco2009}.
Let us also recall that the support of the equilibrium measure~$\mu_\phi$
is the Julia set~$J(\phi)$. If one can prove that
$J(\phi)\neq J(\psi)$ at some place, then
none of the assertions of Prop.~\ref{prop.pst} can possibly hold.
Similarly, Theorem~1.1 of that article can be seen as the conjunction
of our proposition and  an independent
complex analytic study of generalized Mandelbrot sets
(Prop.~3.3).

3) The main result of~\cite{yuan-zhang2009} is that a
variant of the implication (4)$\Rightarrow$(5) also holds in
a more general setting:
two semi-positive metrics on a line bundle which define
the same measure at a place~$v$ differ by multiplication
by a constant. The given proof works for curves.

4) We also recall that an implication similar to (1)$\Rightarrow$(5)
holds for general metrized line bundles
on arithmetic varieties, as proven by~\cite{agboola-pappas2000}:
if $\bar L$ and~$\bar M$ are line bundles with adelic metrics
such that $h_{\bar L}=h_{\bar M}$,
then $\bar L\otimes\bar M^{-1}$ is torsion in the
Arakelov Picard group~$\bPic(X)$: the heights determine
the metrics.


\bibliographystyle{smfplain}
\bibliography{aclab,acl,remarks}

\end{document}